\theoremstyle{plain}
\newtheorem{theo}{Theorem}[section]
\newtheorem{prop}[theo]{Proposition}
\newtheorem{lem}[theo]{Lemma}
\newtheorem{coro}[theo]{Corollary}
\newtheorem*{Cauchon}{Cauchon's Theorem}
 \theoremstyle{definition}
 \newtheorem{ex}[theo]{Example}
 \theoremstyle{remark}
 \newtheorem{rem}[theo]{Remark}
\numberwithin{equation}{section}
\newcommand{\freealgebra}[2]{{#1} \langle {#2}\rangle}
\newcommand{\integers}{\mathbb{Z}}
\DeclareMathOperator{\gr}{gr}
\begin{document}

\title[Free symmetric algebras in division rings]{Free symmetric  algebras in division rings
generated by  enveloping algebras of Lie algebras}

\author[V. O. Ferreira]{Vitor O. Ferreira}
\address{Department of Mathematics - IME, University of S\~ao Paulo,
Caixa Postal 66281, S\~ao Paulo, SP, 05314-970, Brazil}
\email{vofer@ime.usp.br}
\thanks{The first and second authors were partially supported by
Fapesp-Brazil Proc.~2009/52665-0.}

\author[J. Z. Gon\c calves]{Jairo Z. Gon\c calves}
\address{Department of Mathematics - IME, University of S\~ao Paulo,
Caixa Postal 66281, S\~ao Paulo, SP, 05314-970, Brazil}
\email{jz.goncalves@usp.br}
\thanks{The second author was partially supported by CNPq-Brazil Grant 301.320/2011-0.}

\author[J. S\'{a}nchez]{Javier S\'{a}nchez}
\address{Department of Mathematics - IME, University of S\~ao Paulo,
Caixa Postal 66281, S\~ao Paulo, SP, 05314-970, Brazil}
\email{jsanchez@ime.usp.br}
\thanks{The third  author was partially supported by DGI MINECO
MTM2011-28992-C02-01, by FEDER UNAB10-4E-378 ``Una manera de hacer
Europa'', and by the Comissionat per Universitats i Recerca de la
Generalitat de Catalunya.}

\subjclass[2010]{Primary 16K40, 16S36, 16W10; Secondary 16S10, 16S30}

\keywords{Infinite dimensional division rings, division
rings with involution, free associative algebras, enveloping algebras of Lie algebras}

\date{06 June 2014}

\begin{abstract}
For any Lie algebra $L$ over a field, its universal enveloping
algebra $U(L)$ can be embedded in a division ring $\mathfrak{D}(L)$
constructed by  Lichtman. If $U(L)$ is
an Ore domain, $\mathfrak{D}(L)$ coincides with its ring of
fractions.

It is well known that the principal involution of $L$, $x\mapsto
-x$, can be extended to an involution of $U(L)$, and Cimpri\u{c} has
proved that this involution can be extended to one on
$\mathfrak{D}(L)$.

For a large class of noncommutative Lie algebras $L$ over a field of
characteristic zero, we show that $\mathfrak{D}(L)$ contains
noncommutative free algebras generated by symmetric elements with
respect to (the extension of) the principal involution. This class
contains all noncommutative Lie algebras such that $U(L)$ is an Ore domain.
\end{abstract}

\maketitle

\section*{Introduction}

A long-standing conjecture of Makar-Limanov states that a division
ring which is finitely generated (as a division ring) and infinite
dimensional over its center contains a free (associative) algebra of
rank two over the center \cite{Makaronfreesubobjects}.
Makar-Limanov's conjecture has been extensively investigated, see
e.g. \cite{GoncalvesShirvaniSurvey}. We remark that  containing a
free algebra over the center  is equivalent  to containing a free
algebra over any central subfield \cite[Lemma~1]{MakarMalcolmson91}.

In \cite{FerreiraGoncalvesSanchez1},  we raised a  question related
to Makar-Limanov's conjecture. In the presence of an involution,
$x\mapsto x^*$, a natural investigation would be whether a division
ring satisfying Makar-Limanov's conjecture contains a free algebra
of rank 2 generated by symmetric elements i.e. elements satisfying
$x^*=x$.

Let $k$ be a field. There are two important families of $k$-algebras
which are endowed with  natural involutions. The first one is the
class of group algebras $k[G]$ over a group $G$. The map
$k[G]\rightarrow k[G]$, $\sum_{x\in G}xa_x\mapsto \sum_{x\in G}
x^{-1}a_x$, where, for all $x\in G$, $a_x$ are elements of $k$ which
are zero except for a finite number of $x$, is called the
\emph{canonical involution}. The second important class of
$k$-algebras endowed with an involution is the class of the
universal enveloping algebras of Lie algebras. Let $L$ be a Lie
$k$-algebra and $U(L)$ be its universal enveloping algebra. There is
a natural involution on $L$, $L\rightarrow L$, $x\mapsto -x$. It is
well known that this involution can be extended to an involution of
$U(L)$ that fixes $k$ elementwise. We call this involution the
\emph{principal involution} on $L$.

It is not known for which groups $G$ the group algebra $k[G]$ is
embeddable in a division ring. Moreover, even if $k[G]$ is embedded
in a division ring $D$, the canonical involution may not be extended
to one on $D$. When $G$ is an orderable group, the group ring $k[G]$
is embeddable in $k((G))$, the division algebra of Malcev-Neumann
series of $G$ over $k$ \cite{Malcev}, \cite{Neumann}. Let $k(G)$ be
the division algebra of $k((G))$ generated by $k[G]$. In
\cite{SanchezfreegroupalgebraMNseries}, it was shown that if $G$ is
not abelian, any division ring containig $k[G]$ must contain a free
$k$-algebra of rank two. In \cite{FerreiraGoncalvesSanchez1}, we
proved that the canonical involution of $k[G]$ can be extended to
$k(G)$ and that $k(G)$ contains a free $k$-algebra of rank two
generated by symmetric elements with respect to the canonical
involution.

That $U(L)$, the universal enveloping algebra of a Lie $k$-algebra
$L$, can be embedded in a division algebra was proved by Cohn in
\cite{Cohnembeddingrings}. In \cite{Lichtmanvaluationmethods},
Lichtman obtained a simpler proof of this fact. For each Lie
$k$-algebra $L$, he constructed a division $k$-algebra
$\mathfrak{D}(L)$ that contains $U(L)$ and it is generated by it. In
the study of division rings, $\mathfrak{D}(L)$ plays a similar role
to $k(G)$ for the group algebra $k[G]$ of an ordered group $G$. In
\cite{Cimpricfreefieldmany}, Cimpri{\v{c} proved that the principal
involution on $U(L)$ can be extended to $\mathfrak{D}(L)$. In
\cite{Lichtmanfreeuniversalenveloping}, Lichtman also proved that if
$k$ is a field of characteristic zero and $L$ is not commutative,
any division ring that contains $U(L)$ must contain a free algebra
of rank 2. The main aim of this paper is to show that if $k$ is of
characteristic zero, then, for a large class of noncommutative Lie
$k$-algebras $L$, the division ring $\mathfrak{D}(L)$ contains a
free $k$-algebra of rank two generated by symmetric elements with
respect to the principal involution. This class contains the
noncommutative residually nilpotent Lie $k$-algebras and the
noncommutative Lie $k$ algebras $L$ for which $U(L)$ is an Ore
domain. We give explicit generators for the free algebras generated
by symmetric elements.

\medskip

The contents of Section~\ref{sec:preliminaries} are known. We sketch
the construction of $\mathfrak{D}(L)$ following
Lichtman~\cite{Lichtmanvaluationmethods}, and show how the princial
involution is extended to $\mathfrak{D}(L)$ as shown in
\cite{Cimpricfreefieldmany}. We also present some results on the
completion of skew polynomial rings that will be needed in
Section~\ref{sec:residuallynilpotent}.

Let $k$ be a field of characteristic zero. Let $H$ be the Heisenberg
Lie $k$-algebra, i.e. the Lie $k$-algebra with presentation
$$H=\langle x,y\mid [[y,x],x]=[[y,x],y]=0 \rangle.$$
The aim of Section~\ref{sec:HeisenbergLiealgebra} is to find
generators of a free $k$-algebra of rank two generated by symmetric
elements with respect to the principal involution inside
$\mathfrak{D}(H)$, the Ore division ring of fractions of $U(H)$.
These generators must be of a certain form so that the results of
Sections~\ref{sec:residuallynilpotent} and \ref{sec:U(L)isOre} can
be applied. The generators are obtained using a result of
Cauchon~\cite{Cauchoncorps}, who used the techniques from
\cite{Makar-LimanovOnsubalgebrasofthe}, and applying a technical
result proved in Subsection~\ref{subsec:technicalresult} that
symmetrizes the free algebra obtained from Cauchon's result.

In Section~\ref{sec:residuallynilpotent}, we  obtain a free algebra
of rank two generated by symmetric elements with respect to the
principal involution inside $\mathfrak{D}(L)$ where $L$ is a
(certain generalization of) residually nilpotent Lie algebra over a
field $k$ of characteristic zero. This free algebra is obtained by
pulling back the free algebra generated by symmetric elements in the
case of the Heisenberg Lie algebra. This pullback is done using a
series technique which can be seen as a translation of the one used
for the group algebra $k[G]$ of an ordered group $G$ and $k(G)$, in
\cite{SanchezfreegroupalgebraMNseries} and
\cite{FerreiraGoncalvesSanchez1}, to the language of universal
enveloping algebras and $\mathfrak{D}(L)$.

In Section~\ref{sec:U(L)isOre}, we use the technique from
\cite{Lichtmanfreeuniversalenveloping} to obtain a free algebra
inside $\mathfrak{D}(L)$, which is generated by symmetric elements
with respect to the principal involution, from the one obtained in
the residually nilpotent case.

In Section~5 further comments on the subject are made.

\bigskip

All algebras (except for Lie algebras) will be associative with $1$, and their homomorphisms will be understood to respect $1$.   We will also work with Lie algebras, but the term Lie will always be explicit so that no confusion is possible.

A \emph{valuation} on an algebra $A$ is a map $\vartheta\colon
A\rightarrow \mathbb{Z}\cup\{\infty\}$  that satisfies the following
three properties (i) $\vartheta(x)=\infty$ if and only if $x=0$,
(ii) $\vartheta(xy)=\vartheta(x)+\vartheta(y)$, and (iii)
$\vartheta(x+y)\geq \min\{\vartheta(x),\vartheta(y)\}$, for all
$x,y\in A$.

By \emph{free algebra on a set $X$}  over a field $k$, we mean the
associative $k$-algebra of noncommuting polynomials with
indeterminates from the set $X$.

Given a $k$-algebra $A$, a \emph{$k$-derivation} is a $k$-linear map
$\delta \colon A\rightarrow A$ such that
$\delta(ab)=\delta(a)b+a\delta(b)$ for all $a,b\in A$.

We shall give a few words about Ore localization. Notice that we
shall only need localization of domains at certain multiplicative
sets. For further details the reader is referred to, for example,
\cite{Lam2}.

We say that an algebra $R$ is a \emph{domain} if it is nonzero and
contains no zero divisors other than $0$.

A subset $\mathfrak{S}$ of $R$ is \emph{multiplicative} if it
contains $1$ and is closed under multiplication, i.e.
$x,y\in\mathfrak{S}$ implies that $xy\in\mathfrak{S}$. A
multiplicative subset of the domain $R$ is a \emph{left Ore} subset
if for any $a\in R$ and $s\in\mathfrak{S}$, $\mathfrak{S}a\cap
Rs\neq\emptyset$.  \emph{Right Ore} subsets are defined similarly. A
left and right Ore subset of $R$ is called an \emph{Ore subset}. We
say that a domain $R$ is an \emph{Ore domain} if the multiplicative
set $R\setminus\{0\}$ is an Ore subset of $R$.

Let $R,\ R'$ be any algebras and $\mathfrak{S}$ a subset of $R$. An
algebra homomorphism $f\colon R\rightarrow R'$ is said to be
\emph{$\mathfrak{S}$-inverting} if $f$ maps $\mathfrak{S}$ into the
units of $R'$. We say that a $\mathfrak{S}$-inverting homomorphism
$f\colon R\rightarrow R'$ is a \emph{universal
$\mathfrak{S}$-inverting homomorphism} if for any other
$\mathfrak{S}$-inverting homomorphism $g\colon R\rightarrow R''$,
there exists a unique homomorphism $\tilde{g}\colon R'\rightarrow
R''$ such that $g=\tilde{g}f$.

The results we will need are the following. Let $R$ be a domain and
$\mathfrak{S}$ a (left) Ore subset in $R$. There exists a unique
algebra $\mathfrak{S}^{-1}R$ such that $R$ embeds into
$\mathfrak{S}^{-1}R$, all the elements of $\mathfrak{S}^{-1}R$ can
be expressed in the form $s^{-1}r$ where $a\in R$,
$s\in\mathfrak{S}$, and the embedding $R\hookrightarrow
\mathfrak{S}^{-1} R$ is universal $\mathfrak{S}$-inverting.
Moreover, if $\mathfrak{S}=R\setminus\{0\}$, then $\mathfrak{S}^{-1}
R$ is a division algebra that contains $R$ and is generated by it.
In this event, we will say that $\mathfrak{S}^{-1} R$ is the
\emph{Ore division ring of fractions} of $R$.


\section{Preliminaries}\label{sec:preliminaries}

In this section, we present known results and definitions that will be useful in later sections. The lesser known results are proved in some detail for the sake of completeness.

\subsection{The universal enveloping algebra $U(L)$ and the division ring $\mathfrak{D}(L)$}

Throughout this section, $k$ will denote a field.

\medskip

Let $L$ be a Lie $k$-algebra. We will denote by $U(L)$ its
\emph{universal enveloping algebra}, see \cite{JacobsonLiealgebras} or \cite{Dixmierenvelopingalgebras} for further details.

Recall that one can obtain a Lie $k$-algebra from any associative
$k$-algebra $A$. Indeed, define a new multiplication  on $A$ by the
rule $[x,y]=xy-yx$. The $k$-vector space $A$ with this
multiplication is a Lie $k$-algebra denoted by $A^-$. The universal
enveloping algebra of a Lie $k$-algebra $L$ is an associative
$k$-algebra $U(L)$, such that $L$ is a Lie subalgebra of $U(L)^-$,
satisfying the property that given an associative $k$-algebra $A$,
any Lie algebra homomorphism $L\rightarrow A^-$ can be extended to a
homomorphism $U(L)\rightarrow A$.

In particular one obtains that any homomorphism of Lie $k$-algebras
$L\rightarrow L'$ can be uniquely extended to a homomorphism of
associative $k$-algebras $U(L)\rightarrow U(L')$. From this
universal property one also obtains the augmentation map
$\varepsilon \colon U(L)\rightarrow k$, which is the unique
extension of the Lie $k$ algebra homomorphism $L\rightarrow k^-$,
$x\mapsto 0$.

\medskip

Let $L$ be a Lie $k$-algebra. Let $\mathcal{L}$ be a $k$-linearly
independent subset of $L$. Suppose that we have defined a total
order $<$ in $\mathcal{L}$. The set of \emph{standard monomials} in
$\mathcal{L}$ is the subset of $U(L)$ consisting of the monomials of
the form $x_1x_2\dotsb x_m$ with $m\geq0$, $x_i\in\mathcal{L}$ and
$x_1\leq x_2\leq \dotsb\leq x_m$, where we understand that the
identity element in $U(L)$ is the standard monomial corresponding to
$m=0$.

Let $\mathcal{B}=\{x_i\mid i\in I\}$ be a totally ordered basis of
$L$. The Poincar\'e-Birkhoff-Witt Theorem (PBW Theorem for short)
states that the standard monomials in $\mathcal{B}$ form a $k$-basis
of $U(L)$. Thus, every element of $U(L)$ can be uniquely expressed
as a finite sum $\sum x_{i_1}x_{i_2}\cdots x_{i_r} a_{i_1i_2\cdots
i_r}$, where $x_{i_1}x_{i_2}\cdots x_{i_r}$ is a standard monomial
in $\mathcal{B}$ and $a_{i_1i_2\cdots i_r}\in k$.

\medskip

\medskip

Let $L$ be a  Lie $k$-algebra and $U(L)$ its universal enveloping
algebra. In~\cite{Lichtmanvaluationmethods}, Lichtman gives a
construction of a skew field $\mathfrak{D}(L)$ which contains $U(L)$
and it is generated by it. We recall the construction (without proofs) for later
reference. For further details, the interested reader is referred to \cite[Section~2.6]{Cohnskew}
or the original paper  \cite{Lichtmanvaluationmethods}. Most of our exposition is taken from
\cite[Section~2]{Cimpricfreefieldmany}.

The \emph{standard filtration} $$\dotsb \subseteq
F_1\subseteq F_0\subseteq F_{-1}\subseteq F_{-2}\subseteq \dotsb$$
of $U(L)$ is defined by $F_i = \{0\}$ if $i
> 0$, $F_0 = k$ and for $i = -n < 0$, $F_i$ is the vector subspace of
$U(L)$ generated by all products containing  $\leq n$ elements from
$L$. The mapping $\vartheta \colon U(L)\rightarrow
\mathbb{Z}\cup\{\infty\}$ defined by $\vartheta(f) = \sup\{i \mid f
\in F_i\}$ is a valuation called the \emph{standard valuation}. Let
$U(L)[t, t^{-1}]$ be the ring of Laurent polynomials in a central
variable $t$. Extend $\vartheta$ to a valuation on $U(L)[t, t^{-1}]$
by $\vartheta( \sum_i t^if_i ) = \min\{\vartheta(f_i) + i\mid
i\in\integers\}$. Define $T=\{h\in U(L)[t,t^{-1}] \mid
\vartheta(h)\geq 0\}\subseteq U(L)[t]$. Note that $U(L)[t, t^{-1}]$
is canonically isomorphic to $\mathcal{C}^{-1}T$, the Ore
localization of $T$ at the multiplicative set
$\mathcal{C}=\{1,t,t^2,\dotsc\}$. For every $n = 1, 2,\dots$, write
$T_n = T/t^nT$ and $\mathcal{U}_n$ for the projection of
$\mathcal{U} = T \setminus tT$ onto $T_n$. It turns out that
$\mathcal{U}_n$ is a regular Ore set in $T_n$ for every $n$ and we
denote by $S_n =\mathcal{U}_n^{-1} T_n$ its Ore localization. There
are natural surjective $k$-algebra homomorphisms
$$\dotsb\rightarrow S_{n+1}\rightarrow S_n\rightarrow \dotsb\rightarrow S_1$$
Write $S$ for the inverse limit of $S_n$. The localization $D
=\mathcal{C}^{-1} S$ is a division ring. There exists a natural
embedding of $T$ into $S$ which extends to a natural embedding of
$U(L)[t, t^{-1}] =\mathcal{C}^{-1}T$ into $D = \mathcal{C}^{-1} S$.
Write $\mathfrak{D}(L)$ for the minimal division ring of $D$
containing $U(L)$. We remark that if $U(L)$ is an Ore domain, then $\mathfrak{D}(L)$
is the Ore division ring of fractions of $U(L)$. Indeed, the universal property
of Ore localizations implies that there exists a $k$-algebra homomorphism from the Ore
division ring of fractions of $U(L)$, but $\mathfrak{D}(L)$ is a division ring that contains $U(L)$
and is generated by it. Thus they are isomorphic.

\medskip

Now we turn our attention to involutions.

Let $R$ be an associative $k$-algebra. A $k$-linear map $*\colon
R\rightarrow R$ is a $k$-\emph{involution} if for all $x,y\in R$,
$(xy)^*=y^*x^*$, $x^{**}= x$.

The analogous concept for Lie algebras is as follows. Let $L$ be a
Lie $k$-algebra. A $k$-linear map  $*\colon L\rightarrow L$ is a
$k$-\emph{involution} if for all $x,y\in L$, $[x,y]^*=[y^*, x^*]$,
$x^{**}= x$. The main example of a $k$-involution in a Lie
$k$-algebra is what we call the \emph{principal involution}. It is
defined by $x\mapsto -x$ for all $x\in L$.

The following proposition is a slight variation of
\cite[Proposition~5]{Cimpricfreefieldmany} whose proof we follow.

\begin{prop}
Every $k$-involution on a Lie $k$-algebra $L$ has a canonical
extension to $\mathfrak{D}(L)$. In particular, the principal
involution of $L$ can be canonically extended to a $k$-involution of $\mathfrak{D}(L)$.
\end{prop}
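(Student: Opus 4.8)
The plan is to extend a $k$-involution $*$ of $L$ to $\mathfrak{D}(L)$ by following it through each layer of Lichtman's construction, using universal properties at every stage. First I would observe that $*\colon L\to L$, viewed as a map $L\to (U(L))^-$, satisfies $[x,y]^*=[y^*,x^*]$, so $x\mapsto x^*$ followed by the anti-automorphism interpretation gives a Lie homomorphism into $(U(L)^{\mathrm{op}})^-$; by the universal property of $U(L)$ this extends uniquely to an algebra homomorphism $U(L)\to U(L)^{\mathrm{op}}$, i.e. to a $k$-involution $*$ of $U(L)$ (for the principal involution this is the standard fact already quoted in the introduction). The key point to record here is that $*$ preserves the number of $L$-factors in a product, hence preserves each $F_i$ of the standard filtration and therefore $\vartheta(f^*)=\vartheta(f)$ for all $f\in U(L)$.

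Next I would push $*$ up the tower. Extend $*$ to $U(L)[t,t^{-1}]$ by fixing $t$; since $t$ is central and $\vartheta(\sum t^i f_i)=\min_i(\vartheta(f_i)+i)$, the extended $*$ is valuation-preserving, so it restricts to an involution of $T=\{h:\vartheta(h)\ge0\}$ and fixes the multiplicative set $\mathcal{C}=\{1,t,t^2,\dots\}$ and $\mathcal{U}=T\setminus tT$ setwise (the latter because $\vartheta(h)=0\iff\vartheta(h^*)=0$). Because $*$ is an anti-automorphism sending $t^nT$ to $t^nT$, it descends to an anti-automorphism of each $T_n=T/t^nT$ sending $\mathcal{U}_n$ to itself; by the universal property of the (two-sided) Ore localization, it extends uniquely to an anti-automorphism of $S_n=\mathcal{U}_n^{-1}T_n$, and these are visibly compatible with the transition maps $S_{n+1}\to S_n$, hence assemble into an anti-automorphism of the inverse limit $S$. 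Finally $*$ fixes $\mathcal{C}$, so again by universality it extends to an anti-automorphism of $D=\mathcal{C}^{-1}S$; restricting to the subdivision ring $\mathfrak{D}(L)$ generated by $U(L)$ — which is $*$-stable because $U(L)$ is — gives the desired map. That $*^2=\mathrm{id}$ on $\mathfrak{D}(L)$ follows because it holds on the generating set $U(L)$ and $*^2$ is an automorphism, and $k$-linearity is inherited at every step; so $*$ is a $k$-involution of $\mathfrak{D}(L)$.

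The main obstacle is bookkeeping rather than depth: one must check carefully that an anti-automorphism (not just an automorphism) behaves correctly under each localization — in particular that if $\mathfrak{S}$ is a left Ore set then $*(\mathfrak{S})$ is a right Ore set and the localization is self-dual under $*$, so that $*$ genuinely descends to $\mathcal{U}_n^{-1}T_n$ — and that the resulting maps commute with the projections $S_{n+1}\to S_n$ and with $T\hookrightarrow S$ so that passing to the inverse limit is legitimate. The word \emph{canonical} in the statement should be read as: the extension is the unique one restricting to the given $*$ on $U(L)$, which is immediate since $\mathfrak{D}(L)$ is generated by $U(L)$ as a division ring and an (anti-)automorphism is determined by its values on a generating set. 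I would streamline the whole argument by invoking, at each localization step, the universal $\mathfrak{S}$-inverting property recalled in the preliminaries, applied to the composite $R\xrightarrow{\;*\;}R^{\mathrm{op}}\hookrightarrow(\mathfrak{S}^{-1}R)^{\mathrm{op}}$.
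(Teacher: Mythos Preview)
Your argument is correct and follows essentially the same route as the paper's proof (which in turn follows Cimpri\v{c}): extend $*$ from $L$ to $U(L)$, fix $t$ to push it to $T$, descend to each $T_n$ and then to $S_n$ by the universal property of Ore localization, pass to the inverse limit $S$, localize at $\mathcal{C}$ to reach $D$, and finally observe that $\mathfrak{D}(L)$ is $*$-stable by minimality. Your write-up is in fact slightly more explicit than the paper's about why $\vartheta$ is $*$-invariant and about the left/right Ore issue for anti-automorphisms; the only cosmetic difference is that the paper phrases the last step as $\mathfrak{D}(L)\cap\mathfrak{D}(L)^*$ being a division subring containing $U(L)$, whereas you argue stability directly---both work, but note that your version implicitly uses $*^2=\mathrm{id}$ on all of $D$ (which does hold, by uniqueness at each stage) before restricting, so you may want to state that before, rather than after, the stability claim.
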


\begin{proof} Every $k$-involution extends uniquely from $L$ to its enveloping
algebra $U(L)$ \cite[Section~2.2.17]{Dixmierenvelopingalgebras}. Setting $t^*= t$, we get an involution on $U(L)[t,
t^{-1}]$ which induces an involution on $T$. Since
$(t^nT)^*\subseteq t^nT$ for every $n$, we have an induced
involution on $T/t^nT$ which will also be denoted by $*$. Note that
the natural epimorphisms $\phi_n:T_{n+1}\rightarrow T_n$ are
$*$-homomorphisms.

One can prove that $\mathcal{U}^*\subseteq \mathcal{U}$ and
$\mathcal{U}_n^*\subseteq \mathcal{U}_n$. Then the involution on
$T_n$ extends uniquely to an involution of $S_n =\mathcal{U}_n^{-1}
T_n$. It is easy to verify that the natural epimorphisms
$\phi'_n\colon S_{n+1}\rightarrow S_n$ are $*$-homomorphisms. It
follows that the termwise involution on the inverse system of $S_n$
and $\phi_n'$ induces an involution on its inverse limit $S$. Since
$t^*= t$, this involution extends uniquely to an involution on $D =
\mathcal{C}^{-1}S$.  Since $\mathfrak{D}(L)\cap \mathfrak{D}(L)^*$
is a division subring of $D$ containing $U(L) = U(L)^*$ and since
$\mathfrak{D}(L)$ is the smallest subfield of $D$ containing $U(L)$,
it follows that $\mathfrak{D}(L) =\mathfrak{D}(L)^*$.
\end{proof}

\medskip

Some other important properties of $\mathfrak{D}(L)$ that we will
need in Section~\ref{sec:residuallynilpotent} are contained in the
following result \cite[Proposition~2.5]{Lichtmanuniversalfields}.

\begin{prop}\label{prop:canonicalfieldoffractions}
Let $L$ be a Lie $k$-algebra. Suppose that $N$ is a Lie subalgebra
of $L$. The following properties are satisfied:
\begin{enumerate}[\rm (1)]
\item The following diagram is
commutative $$\xymatrix{ U(N)\ar@{^{(}->}[r] \ar@{^{(}->}[d] &
\mathfrak{D}(N)\ar@{^{(}->}[d]\\
U(L) \ar@{^{(}->}[r] & \mathfrak{D}(L)}$$
\item If $\mathcal{B}_N$ is a basis of $N$ and $\mathcal{C}$
is a set of elements of $L\setminus N$ such that $\mathcal{B}_N\cup
\mathcal{C}$  is a basis of $L$, then the standard monomials in
$\mathcal{C}$
 are linearly independent over
$\mathfrak{D}(N)$. \qed
\end{enumerate}
\end{prop}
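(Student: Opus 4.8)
The plan is to establish both items using the inverse-limit construction of $\mathfrak{D}(L)$ together with the functoriality already built into it. For item~(1), the key observation is that the inclusion $N\hookrightarrow L$ induces an inclusion $U(N)\hookrightarrow U(L)$ which is compatible with the standard filtrations: the filtration on $U(N)$ coming from products of $\leq n$ elements of $N$ is exactly the restriction of the standard filtration of $U(L)$, because the PBW basis of $U(N)$ extends to a PBW basis of $U(L)$ (choosing $\mathcal B_N\cup\mathcal C$ as in~(2)), so $F_i^{U(N)}=F_i^{U(L)}\cap U(N)$. Hence $\vartheta_L$ restricts to $\vartheta_N$, the inclusion extends to $U(N)[t,t^{-1}]\hookrightarrow U(L)[t,t^{-1}]$ carrying $T^N$ into $T^L$ and $t^nT^N$ into $t^nT^L$, so we get induced maps $T^N_n\to T^L_n$ which are injective (again by PBW, since a standard monomial of $U(N)$ modulo $t^n$ has nonzero image), and these carry $\mathcal U^N_n$ into $\mathcal U^L_n$. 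Localizing, passing to the inverse limit, and then localizing at $\mathcal C$ yields a $k$-algebra homomorphism $D_N\to D_L$ restricting to the inclusion $U(N)\hookrightarrow U(L)$; since $D_N$ is a division ring, this map is injective, and its restriction to $\mathfrak D(N)$ (the smallest division subring of $D_N$ containing $U(N)$) lands in the smallest division subring of $D_L$ containing $U(N)$, which is contained in $\mathfrak D(L)$. Commutativity of the square is then immediate from the construction.

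For item~(2), I would argue at the level of the truncations $S_n$ and pass to the limit. Fix $\mathcal B_N$ and $\mathcal C$ as stated. By the PBW theorem applied to $U(L)$ with the ordered basis $\mathcal B_N\cup\mathcal C$ (with every element of $\mathcal B_N$ smaller than every element of $\mathcal C$, say), the standard monomials $u\cdot v$ with $u$ a standard monomial in $\mathcal B_N$ and $v$ a standard monomial in $\mathcal C$ form a $k$-basis of $U(L)$; equivalently, $U(L)$ is a free left $U(N)$-module on the standard monomials in $\mathcal C$. The heart of the matter is to upgrade this freeness over $U(N)$ to linear independence over the (much larger) division ring $\mathfrak D(N)$. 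The mechanism is that everything in sight is built by the same filtered/inverse-limit recipe: one checks that $T^L$ is a free left $T^N$-module on (the images in $U(L)[t,t^{-1}]$ of) the standard monomials in $\mathcal C$, hence $T^L_n$ is a free left $T^N_n$-module on the images of those monomials, hence after localizing at the Ore sets $\mathcal U^L_n\supseteq\mathcal U^N_n$ one gets that $S^L_n$ is a free left $S^N_n$-module on the same finite sets of monomials. Taking inverse limits and then localizing at $\mathcal C$, one concludes that $D_L$ is a "free" left $D_N$-module on the standard monomials in $\mathcal C$ in the appropriate sense; in particular these monomials are left-linearly independent over $D_N$, and a fortiori over $\mathfrak D(N)$.

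Concretely, to make the module-theoretic claims rigorous without carrying out tedious bookkeeping, I would proceed as follows: (i) show $T^L=\bigoplus_{v} T^N v$, the sum over standard monomials $v$ in $\mathcal C$, by combining the grading by $\vartheta$ with the PBW decomposition of each graded piece $F_i^{U(L)}/F_{i-1}^{U(L)}\cong \gr U(L)$, which as a commutative polynomial ring decomposes compatibly; (ii) reduce modulo $t^nT^L$ to get $T^L_n=\bigoplus_v T^N_n v$; (iii) invert $\mathcal U^N_n$ first to get $\mathcal U^{N,-1}_n T^L_n=\bigoplus_v S^N_n v$, which is already a left $S^N_n$-module, and observe that the remaining elements of $\mathcal U^L_n$ act invertibly on this module (they are invertible in $S^L_n$ and one checks the module embeds in $S^L_n$), so $S^L_n=\bigoplus_v S^N_n v$; (iv) pass to the inverse limit — here the decomposition is over the \emph{finite} set of standard monomials in $\mathcal C$ of bounded "$\mathcal C$-degree" within each filtration layer, so the limit behaves well — and then localize at $\mathcal C$. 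Linear independence of the $v$ over $\mathfrak D(N)\subseteq D_N$ falls out of the directness of the sum $\bigoplus_v D_N v$.

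The main obstacle I anticipate is step~(iii)–(iv): controlling the interaction between the two Ore localizations (by $\mathcal U^N_n$ versus $\mathcal U^L_n$) and making sure the direct-sum decomposition is preserved, and then checking that inverse limits commute with these (infinite) direct sums in the relevant sense. The saving grace is that within each fixed filtration degree only finitely many standard monomials in $\mathcal C$ occur, so one can work degree by degree and the inverse limit is taken of systems that are finitely generated over the $S^N_n$ in each degree; this is exactly the situation handled in Lichtman's original construction, and the cleanest route is to cite \cite[Proposition~2.5]{Lichtmanuniversalfields} directly, as the statement does, and simply indicate that the freeness over $U(N)$ from PBW propagates through each layer of the construction $T\leadsto T_n\leadsto S_n\leadsto S\leadsto D$.
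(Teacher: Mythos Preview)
The paper does not give its own proof of this proposition: the statement is imported verbatim from \cite[Proposition~2.5]{Lichtmanuniversalfields} and closed with a \qed. So there is nothing to compare your argument to here; your sketch is an attempt to reconstruct Lichtman's proof, and at the end you rightly observe that simply citing that reference is the cleanest route.

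That said, your reconstruction has a genuine gap in step~(iii). The assertion $S^L_n=\bigoplus_v S^N_n\,v$ is false already in the simplest case. Take $L$ two-dimensional abelian with basis $\{e,f\}$, $N=ke$, $\mathcal C=\{f\}$. Then $T^L_1\cong\gr U(L)=k[e,f]$ and $S^L_1$ is its fraction field $k(e,f)$, while $S^N_1=k(e)$. The standard monomials $1,f,f^2,\dotsc$ certainly do not span $k(e,f)$ over $k(e)$: for instance $f^{-1}$ is not in $\bigoplus_{j\ge 0} k(e)f^j$. The problem is exactly the one you flagged but did not resolve: inverting elements of $\mathcal U^L_n\setminus\mathcal U^N_n$ genuinely mixes the components, so the direct-sum decomposition does not survive the second localization.

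Fortunately you do not need the spanning half; only linear independence is claimed. That follows already from your steps (i)--(ii) by a clearing-denominators argument. Suppose $\sum_v d_v v=0$ in $D_L$ with finitely many $d_v\in\mathfrak D(N)\subseteq D_N=\mathcal C^{-1}S^N$. Multiplying by a suitable power of $t$ one may assume $d_v\in S^N$. Project to $S^L_n$: there, write each $d_v$ with a common left denominator in $\mathcal U^N_n$ to obtain a relation $\sum_v b_v v=0$ with $b_v\in T^N_n$. This relation holds a priori in $S^L_n$, but the map $T^L_n\hookrightarrow S^L_n$ is injective (the Ore set $\mathcal U^L_n$ is regular), so the relation already holds in $T^L_n=\bigoplus_v T^N_n v$, forcing all $b_v=0$ and hence all images of $d_v$ in $S^N_n$ to vanish. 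Letting $n\to\infty$ gives $d_v=0$ in $S^N$. This repairs your argument for~(2); your outline for~(1) is fine as stated.
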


\subsection{Formal differential  and pseudo-differential operator algebras}
\label{sec:differentialoperators}

Let $R$ be a $k$-algebra,  $\delta$ a $k$-derivation on $R$ and
$\sigma$ a $k$-algebra automorphism of $R$. The \emph{formal
differential operator algebra} over $R$ (respectively, the
\emph{skew polynomial algebra} over $R$), denoted by $R[x;\delta]$
(respectively, $R[x;\sigma]$), is defined as  the $k$-algebra $S$
such that
\begin{enumerate}[(a)]
\item $S$ is a ring containing $R$ as a $k$-subalgebra.
\item $x$ is an element of $S$.
\item $S$ is a free right $R$-module with basis $\{1,x,x^2,\dotsc\}$.
\item $ax=xa+\delta(a)$ for all $a\in R$ (respectively, $ax=x\sigma(a)$ for all $a\in R$.)
\end{enumerate}

The $k$-algebra $R[x;\delta]$ has the following universal property.
Given a $k$-algebra $T$, a $k$-algebra homomorphism $\phi\colon
R\rightarrow T$ and an element $y\in T$ such that
\begin{equation}\label{eq:univpropertypolynomial}
\phi(a)y=y\phi(a)+\phi(\delta(r)) \ \ \textrm{for all } a\in R,
\end{equation}
there exists a unique $k$-algebra homomorphism $\psi\colon
R[x;\delta]\rightarrow T$ such that $\psi_{|R}=\phi$ and
$\psi(x)=y$. There is an analogous universal property for
$R[x;\sigma]$, but we will not need it.

Given $R[x;\delta]$, one can construct the \emph{formal
pseudo-differential operator ring}, denoted $R((t_x;\delta))$,
consisting of the formal Laurent series $\sum_{i=n}^\infty
t_x^ia_i$, with $n\in\mathbb{Z}$ and coefficients $a_i\in R$,
satisfying $at_x^{-1}=t_x^{-1}a+\delta(a)$ for all $a\in R$.
Therefore
\begin{equation}
at_x=t_xa-t_x\delta(a)t_x = \sum_{i=1}^\infty
t_x^i(-1)^{i-1}\delta^{i-1}(a),
\end{equation}
for any $a\in R$.

The subset $R[[t_x;\delta]]$ of $R((t_x;\delta))$ consisting of the
Laurent series of the form $\sum_{i=0}^\infty t_x^ia_i$ is a
$k$-subalgebra of $R((t_x;\delta))$.

The ring $R[[t_x;\delta]]$ can be regarded in another way. Define
$R\langle t_x;\delta\rangle$ as the $k$-algebra $R\langle t_x \mid
at_x=t_xa-t_x\delta(a)t_x,\ a\in R \rangle$. In other words, the
$k$-algebra  $R\langle t_x;\delta\rangle$ is isomorphic to the
coproduct $R\coprod _k k[t_x]$ modulo the two-sided ideal generated
by $\{at_x=t_xa-t_x\delta(a)t_x,\ a\in R \}$. By definition, the
$k$-algebra $R\langle t_x;\delta\rangle$ has the following universal
property. Suppose that we have a $k$-algebra $T$, a $k$-algebra
homomorphism $\phi\colon R\rightarrow T$ and an element $t_y\in T$
such that
\begin{equation}\label{eq:univpropertypolynomialt}
\phi(a)t_y=t_y\phi(a)-t_y\phi(\delta(a))t_y \ \ \textrm{ for all }
a\in R.
\end{equation}
Then there is a unique $k$-algebra homomorphism $\psi\colon R\langle
t_x;\delta\rangle\rightarrow T$ such that $\psi_{|R}=\phi$ and
$\psi(t_x)=t_y$.

Let $\varepsilon_1\colon k[t_x]\to R\langle t_x;\delta\rangle$ and
$\varepsilon _2 \colon R\to R\langle t_x;\delta\rangle$ be the
natural homomorphisms of $k$-algebras. By the universal property,
there is a $k$-algebra homomorphism $\varphi\colon R\langle
t_x;\delta\rangle\to R[[t_x;\delta]]$ such that, for any $n\ge 1$,
$\varphi (\varepsilon _1(t_x^n))=t_x^n$ and, for any $a\in R$,
$\varphi (\varepsilon _2(a))=a$. Therefore $\varepsilon _1$ and
$\varepsilon _2$ are injective homomorphisms. To simplify the
notation, we just identify $R$ and $k[t_x]$ with their images in
$R\langle t_x;\delta\rangle$ without making any reference to the
embeddings $\varepsilon _1$ and $\varepsilon _2$.

Given a positive integer $n$, any element  $f\in R\langle
t_x;\delta\rangle$ can be expressed as
$$f=a_0+t_xa_1+\dotsb+t_x^{n-1}a_{n-1}+t^nf_n,$$ where $f_n\in
R\langle t_x;\delta\rangle$, and unique $a_0,\dotsc,a_{n-1}\in R$.
Hence, the $k$-algebra $R[[t_x\delta]]$ is the completion of
$R\langle t_x;\delta\rangle$ with respect to the topology induced by
the chain of ideals $\{t_x^nR\langle t_x;\delta\rangle \}_{n\geq 0}$
(see \cite[Lemma~7.3]{Herberasanchezinfiniteinversion} for more
details). Moreover the set $\mathcal{S}=\{1,t_x,t_x^2,\dots\}$ is a
left denominator set of $R[[t_x;\delta]]$ such that the Ore
localization $\mathcal{S}^{-1}R[[t_x;\delta]]$ is the $k$-algebra
$R((t_x;\delta))$, see for example \cite[Theorem~2.3.1]{Cohnskew}.

Note that there is a natural embedding $R[x;\delta]\hookrightarrow
R((t_x;\delta_x))$ sending $x$ to $t_x^{-1}$.

The following lemma will be useful.

\begin{lem}\label{lem:extensionmorphismtocompletion}
Let $R$ and $S$ be two $k$-algebras. Suppose that $\delta_w\colon
R\rightarrow R$ and $\delta_z\colon S\rightarrow S$ are
$k$-derivations of $R$ and $S$, respectively. Let $\phi\colon
R\rightarrow S$ be a $k$-algebra homomorphism such that
\begin{equation}\label{eq:conditionforextension}
\phi(\delta_w(a))=\delta_z(\phi(a)) \ \ \textrm{for all } a\in R.
\end{equation}
Then
\begin{enumerate}[\rm (1)]
\item $\phi$ can be extended to a unique $k$-algebra homomorphism
$$R[w;\delta_w]\rightarrow S[z;\delta_z]$$ sending $w$ to $z$.
\item $\phi$ can be extended to a unique $k$-algebra homomorphism
$$\psi:R\langle t_w;\delta_w\rangle \rightarrow S\langle t_z;\delta_z\rangle,$$
such that  $\psi(t_w)=t_z$.
\item The $k$-algebra homomorphism $\psi:R\langle t_w;\delta_w\rangle \rightarrow S\langle
t_z;\delta_z\rangle$ induces a $k$-algebra homomorphism
$$\Psi\colon R((t_w;\delta_w))\rightarrow S((t_z;\delta_z)),\ \
\sum_i t_w^ia_i\mapsto \sum_i t_z^i\phi(a_i).$$
\end{enumerate}
\end{lem}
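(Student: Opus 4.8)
The three statements are successive refinements of the same idea: extend $\phi$ first to the formal differential operator algebras, then to the $t$-presentation, and finally to the pseudo-differential operator rings. I would prove (1) by invoking the universal property of $R[w;\delta_w]$ displayed in \eqref{eq:univpropertypolynomial}: apply it with target $T = S[z;\delta_z]$, the composite $R\xrightarrow{\phi} S\hookrightarrow S[z;\delta_z]$ as the algebra homomorphism, and $y = z$; the compatibility hypothesis to be checked, namely $\phi(a)z = z\phi(a) + \phi(\delta_w(a))$, follows by rewriting $z\phi(a) + \phi(\delta_w(a)) = z\phi(a) + \delta_z(\phi(a))$ using \eqref{eq:conditionforextension}, and this equals $\phi(a)z$ by the defining relation of $S[z;\delta_z]$. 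Uniqueness is part of the universal property. For (2) the argument is identical but now uses the universal property of $R\langle t_w;\delta_w\rangle$ attached to \eqref{eq:univpropertypolynomialt}: take $T = S\langle t_z;\delta_z\rangle$, the homomorphism $R\xrightarrow{\phi} S\hookrightarrow S\langle t_z;\delta_z\rangle$, and $t_y = t_z$; the required identity $\phi(a)t_z = t_z\phi(a) - t_z\phi(\delta_w(a))t_z$ again reduces, via \eqref{eq:conditionforextension}, to the defining relation $\phi(a)t_z = t_z\phi(a) - t_z\delta_z(\phi(a))t_z$ of $S\langle t_z;\delta_z\rangle$.

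For (3) I would first argue that $\psi$ is continuous for the $t$-adic topologies. By construction $\psi(t_w^n R\langle t_w;\delta_w\rangle) \subseteq t_z^n S\langle t_z;\delta_z\rangle$, since $\psi(t_w) = t_z$ and $\psi$ is a homomorphism; hence $\psi$ induces a homomorphism on the completions. Using the identification, recalled just before the lemma, of $R[[t_x;\delta]]$ with the completion of $R\langle t_x;\delta\rangle$ with respect to $\{t_x^n R\langle t_x;\delta\rangle\}_{n\ge 0}$, this gives a $k$-algebra homomorphism $\widehat\psi\colon R[[t_w;\delta_w]] \to S[[t_z;\delta_z]]$. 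Concretely, writing an element of $R[[t_w;\delta_w]]$ as $\sum_{i\ge 0} t_w^i a_i$ and truncating at level $n$, one has $\psi\big(\sum_{i<n} t_w^i a_i\big) = \sum_{i<n} t_z^i\phi(a_i)$, so passing to the limit $\widehat\psi\big(\sum_i t_w^i a_i\big) = \sum_i t_z^i\phi(a_i)$, which is the stated formula on the nonnegative part. Finally, to reach $R((t_w;\delta_w)) = \mathcal{S}^{-1}R[[t_w;\delta_w]]$ I would invoke the universal property of Ore localization: $\widehat\psi$ sends $\mathcal{S} = \{1,t_w,t_w^2,\dots\}$ to $\{1,t_z,t_z^2,\dots\}$, which consists of units in $S((t_z;\delta_z))$, so $\widehat\psi$ extends uniquely to $\Psi\colon R((t_w;\delta_w)) \to S((t_z;\delta_z))$. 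On a general Laurent series $\sum_{i\ge n} t_w^i a_i = t_w^{n}\cdot\big(\sum_{i\ge n} t_w^{i-n} a_i\big)$ when $n<0$ one writes it as $t_w^{-|n|}$ times a power series, so $\Psi\big(\sum_i t_w^i a_i\big) = t_z^{-|n|}\sum_{i\ge n} t_z^{i-n}\phi(a_i) = \sum_i t_z^i\phi(a_i)$, giving the displayed formula in full generality.

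The only subtle point—and the one I would write out with a little care—is the continuity/completion step in (3): one must be sure that the map $\psi$ defined by a universal property on the "polynomial" object $R\langle t_w;\delta_w\rangle$ genuinely respects the filtration by powers of $t_w$ and therefore passes to completions, and that the resulting map on $R[[t_w;\delta_w]]$ really is computed termwise by the displayed formula. This is where the identification of $R[[t_x;\delta]]$ as a completion, cited from \cite[Lemma~7.3]{Herberasanchezinfiniteinversion}, does the work; everything else is a mechanical application of universal properties, and uniqueness at each stage is automatic because each of the objects $R[w;\delta_w]$, $R\langle t_w;\delta_w\rangle$, and $\mathcal{S}^{-1}R[[t_w;\delta_w]]$ is characterized by a universal property (or is a completion, hence a limit) in which $\phi$, the image of $w$ or $t_w$, and the localizing set are prescribed.
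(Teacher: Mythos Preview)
Your proof is correct and follows essentially the same approach as the paper: parts (1) and (2) are handled by the universal properties \eqref{eq:univpropertypolynomial} and \eqref{eq:univpropertypolynomialt}, and part (3) is obtained by first observing that $\psi(t_w^n R\langle t_w;\delta_w\rangle)\subseteq t_z^n S\langle t_z;\delta_z\rangle$ (the paper phrases this as a commutative diagram of quotients), passing to the completion $R[[t_w;\delta_w]]\to S[[t_z;\delta_z]]$, and then extending via the Ore localization at $\{t_w^n\}$. Your write-up is more detailed than the paper's, but the strategy is identical.
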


\begin{proof}
(1)  follows from the universal property of $R[w;\delta_w]$.

(2) follows from the universal property of $R\langle
t_w;\delta_w\rangle$.

(3) The homomorphism $\psi$ is such that $\psi(a)=\phi(a)$ for all
$a\in R$ and $\psi(t_w)=t_z$. Hence it induces the commutative
diagram of $k$-algebra homomorphisms
$$\xymatrix{ \frac{R\langle t_w;\delta_w\rangle}{t_w^{n+1}R\langle t_w;\delta_w\rangle } \ar[r]\ar[d]  &
\frac{S\langle t_z;\delta_z\rangle}{t_z^{n+1}S\langle t_z;\delta_z\rangle  }\ar[d]\\
   \frac{R\langle t_w;\delta_w\rangle}{t_w^{n}R\langle t_w;\delta_w\rangle } \ar[r]       &
 \frac{S\langle t_z;\delta_z\rangle}{t_z^n S\langle t_z;\delta_z\rangle } }$$
where the vertical arrows are the canonical projections and the
horizontal arrows are induced by $\psi$ in the natural way.

The universal property of the completion yields the $k$-algebra
homomorphism $\Phi\colon R[[t_w;\delta_w]]\rightarrow
S[[t_z;\delta_z]]$ with $\Phi(\sum_{i=0}^\infty
t_w^ia_i)=\sum_{i=0}^\infty t_z^i\phi(a_i)$. Since
$R((t_w;\delta_w))$ is the Ore localization of $R[[t_w;\delta_w]]$
at $\mathcal{S}=\{1, t_w,t_w^2,\dots\}$, we get the desired
extension.
\end{proof}

A slight variation of the next result can be found in
\cite{Herberasanchezinfiniteinversion}.

\begin{lem}\label{lem:extensiontocompletion}
Let $R$ be a $k$-algebra, let $\delta_w\colon R\rightarrow R$ be a
$k$-derivation on $R$ and consider the formal differential operator
algebra $R[w;\delta_w]$. Let $S$ be a $k$-algebra. Consider the
inner $k$-derivation, defined by an element $s\in S$,
$\delta_s\colon S\rightarrow S$ given by $\delta_s(b)=bs-sb$ for all
$b\in S$.

Suppose that the following properties hold:
\begin{enumerate}[\rm (a)]
\item $R[w;\delta_w]$ is a subring of $S$,
\item $\delta_s(R)\subseteq R$, and
\item $\delta_s(w)\in R$.
\end{enumerate}
Then the following assertions hold true.
\begin{enumerate}[\rm (1)]
\item $\delta_s$ induces  a $k$-derivation
$\delta_s\colon R\langle t_w;\delta_w\rangle\rightarrow R\langle
t_w;\delta_w\rangle$ such that $\delta_s(t_w)=-t_w\delta_s(w)t_w$.
\item $\delta_s$ induces a $k$-derivation
$\delta_s\colon R((t_w;\delta_w))\rightarrow R((t_w;\delta_w))$ such
that $\delta_s\left(\sum_{i\geq 0} t_w^ia_i\right)=\sum_{i\geq
0}\delta_s(t_w^ia_i)$, where
$\delta_s(t_w)=-t_w\delta_s(w)t_w=\sum_{i\geq 1}
t_w^{i+1}(-1)^{i}\delta_w^{i-1}(\delta_s(w))$, and
$\delta_s(t_w^{-1})=\delta_s(w)$.
\end{enumerate}
\end{lem}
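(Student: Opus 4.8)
The plan is to deduce part~(1) from the universal property \eqref{eq:univpropertypolynomialt} of $R\langle t_w;\delta_w\rangle$ applied to the algebra of dual numbers. Write $A=R\langle t_w;\delta_w\rangle$ and let $A[\varepsilon]=A[\varepsilon]/(\varepsilon^2)$. Since $\delta_s(R)\subseteq R$ by~(b), the restriction $\delta_s\colon R\to R$ is a $k$-derivation, so $\phi\colon R\to A[\varepsilon]$, $\phi(a)=a+\delta_s(a)\varepsilon$, is a $k$-algebra homomorphism; and by~(c) the element $t_y:=t_w-t_w\delta_s(w)t_w\,\varepsilon$ is well defined in $A[\varepsilon]$. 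Granting that $\phi$ and $t_y$ satisfy $\phi(a)t_y=t_y\phi(a)-t_y\phi(\delta_w(a))t_y$ for all $a\in R$, the universal property yields a $k$-algebra homomorphism $\Delta\colon A\to A[\varepsilon]$ with $\Delta|_R=\phi$ and $\Delta(t_w)=t_y$. Composing $\Delta$ with $\varepsilon\mapsto 0$ gives the identity of $A$ (it is the identity on the generators $R$ and $t_w$), so $\Delta$ has the form $\Delta(f)=f+\delta_s(f)\varepsilon$ for a unique $k$-linear map $\delta_s\colon A\to A$; that $\Delta$ is a ring homomorphism forces $\delta_s$ to be a $k$-derivation, and by construction it restricts to the given one on $R$ and satisfies $\delta_s(t_w)=-t_w\delta_s(w)t_w$. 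This gives~(1).

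The only substantial point in~(1), and the step I expect to be the main obstacle, is checking the relation $\phi(a)t_y=t_y\phi(a)-t_y\phi(\delta_w(a))t_y$ in $A[\varepsilon]$. Its $\varepsilon^0$-component is the defining relation $at_w=t_wa-t_w\delta_w(a)t_w$ of $A$, so nothing is needed there. For the $\varepsilon^1$-component one needs the commutation identity $\delta_s(\delta_w(a))=\delta_w(\delta_s(a))+a\delta_s(w)-\delta_s(w)a$ for $a\in R$; this holds because, inside $S$ (using~(a)), one has $\delta_w(a)=aw-wa$, and applying the derivation $\delta_s$ of $S$ — with $\delta_s(a),\delta_s(w)\in R$ — and recognizing $\delta_w$ again as the commutator with $w$ gives exactly this, i.e.\ $[\delta_s,\delta_w]=\mathrm{ad}_{\delta_s(w)}$ on $R$. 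Substituting this identity into the $\varepsilon^1$-component and then repeatedly invoking the defining relation of $A$ (applied both to $c=\delta_s(a)\in R$ and inside the product $t_w\delta_s(w)t_w$, using $ct_w=t_wc-t_w\delta_w(c)t_w$) reduces the required equality to a tautology. It is a finite computation, but it must be organized carefully.

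For part~(2) I would extend the derivation $\delta_s$ of $A$ first to the completion $R[[t_w;\delta_w]]$ and then to $R((t_w;\delta_w))=\mathcal{S}^{-1}R[[t_w;\delta_w]]$, with $\mathcal{S}=\{1,t_w,t_w^2,\dots\}$. For the first extension, note that $\delta_s(w)\in R$ and $Rt_w\subseteq t_wA$ give $\delta_s(t_w)=-t_w\delta_s(w)t_w\in t_w^2A$, while $\delta_s(R)\subseteq R$; from this and the Leibniz rule one gets $\delta_s\bigl(t_w^nA\bigr)\subseteq t_w^n R[[t_w;\delta_w]]$ for every $n$, so $\delta_s$ is continuous for the $t_w$-adic topology and extends uniquely to a continuous $k$-derivation of the completion, necessarily given by $\delta_s\bigl(\sum_{i\ge0}t_w^ia_i\bigr)=\sum_{i\ge0}\delta_s(t_w^ia_i)$. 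For the second extension, a $k$-derivation extends uniquely along the Ore localization map $R[[t_w;\delta_w]]\hookrightarrow R((t_w;\delta_w))$ via $\delta_s(s^{-1})=-s^{-1}\delta_s(s)s^{-1}$; in particular $\delta_s(t_w^{-1})=-t_w^{-1}\bigl(-t_w\delta_s(w)t_w\bigr)t_w^{-1}=\delta_s(w)$, and the series form $\delta_s(t_w)=-t_w\delta_s(w)t_w=\sum_{i\ge1}t_w^{i+1}(-1)^{i}\delta_w^{i-1}(\delta_s(w))$ follows from the expansion $\delta_s(w)t_w=\sum_{i\ge1}t_w^{i}(-1)^{i-1}\delta_w^{i-1}(\delta_s(w))$ valid in $R((t_w;\delta_w))$. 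The extension-to-the-completion step is of the routine nature and in the spirit of Lemma~\ref{lem:extensionmorphismtocompletion}.
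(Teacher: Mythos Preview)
Your argument is correct and is essentially the paper's own proof in a different guise: the paper encodes the would-be derivation as a $k$-algebra homomorphism $f\mapsto\bigl(\begin{smallmatrix}f&\delta_s(f)\\0&f\end{smallmatrix}\bigr)$ into $\mathbb{T}_2(R\langle t_w;\delta_w\rangle)$ and checks the universal property \eqref{eq:univpropertypolynomialt} there, which is exactly your dual-numbers map $f\mapsto f+\delta_s(f)\varepsilon$ under the identification of $A[\varepsilon]/(\varepsilon^2)$ with the subring of $\mathbb{T}_2(A)$ having equal diagonal entries. The paper's verification culminates in the Jacobi identity $[[w,s],a]+[[a,w],s]+[[s,a],w]=0$, which is precisely your relation $[\delta_s,\delta_w]=\mathrm{ad}_{\delta_s(w)}$ on $R$; and for part~(2) the paper likewise passes to the completion via $\Phi(t_w^nA)\subseteq T_w^n\mathbb{T}_2(R)\langle T_w;\Delta_w\rangle$, matching your continuity argument $\delta_s(t_w^nA)\subseteq t_w^nA$.
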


\begin{proof}
(1) We must show that there exists a homomorphism of $k$-algebras
$\Phi\colon R\langle t_w;\delta_w\rangle\rightarrow
\mathbb{T}_2(R\langle t_w;\delta_w\rangle)$, $f\mapsto \bigl(
\begin{smallmatrix} f & \delta_s(f) \\ 0 & f
\end{smallmatrix}\bigr)$, where $\mathbb{T}_2(R\langle t_w;\delta_w\rangle)$ is the ring of
$2\times 2$ upper triangular matrices over $R\langle
t_w;\delta_w\rangle$.

By the universal property \eqref{eq:univpropertypolynomialt}, this
is equivalent to proving that, for any $a\in R$, the following
matrix equality holds
$$\left(\begin{array} {cc}a & \delta_s(a) \\ 0 & a  \end{array} \right)
\left(\begin{array}{cc}t_w & \delta_s(t_w) \\ 0 & t_w \end{array}
\right)
$$
$$=
\left(\begin{array}{cc} t_w & \delta_s(t_w)
\\ 0 & t_w   \end{array} \right)\left(\begin{array}{cc} a & \delta_s(a) \\ 0 & a
\end{array} \right)  -
 \left(\begin{array}{cc} t_w & \delta_s(t_w) \\ 0 & t_w \end{array} \right)\left(\begin{array}{cc} \delta_w(a) & \delta_s(\delta_w(a))  \\ 0 & \delta_w(a)  \end{array}
\right)\left(\begin{array}{cc} t_w & \delta_s(t_w) \\ 0 & t_w
\end{array} \right), $$
\medskip

\noindent which  yields
$$\left(\begin{array}{cc} at_w &  \delta_s(a)t_w + a\delta_s(t_w) \\ 0 & at_w \end{array} \right)=$$ $$=
\left(\begin{array}{cc} t_wa &  \delta_s(t_w)a+t_w\delta_s(a) \\ 0 &
t_wa
\end{array} \right) - \left(\begin{array}{cc}  t_w\delta_w(a)t_w
&   t_w\delta_w(a)\delta_s(t_w)+ t_w\delta_s(\delta_w(a))t_w +
\delta_s(t_w)\delta_w(a)t_w  \\  0 & t_w\delta_w(a)t_w\end{array}
\right).$$

Since $at_w=t_wa-t_w\delta_w(a)t_w$, the matrix equality is
equivalent to the equality
$$\delta_s(a)t_w + a\delta_s(t_w) \stackrel{(*)}{=}\delta_s(t_w)a+t_w\delta_s(a)
-t_w\delta_w(a)\delta_s(t_w)- t_w\delta_s(\delta_w(a))t_w -
\delta_s(t_w)\delta_w(a)t_w.$$

After substituting  $-t_w\delta_s(w)t_w$ for $\delta_s(t_w)$, the
right hand side of the equality $(*)$ is equal to
\begin{eqnarray*}
& &
-t_w\delta_s(w)t_wa+t_w\delta_s(a)+t_w\delta_w(a)t_w\delta_s(w)t_w-t_w\delta_s(\delta_w(a))t_w
+ t_w\delta_s(w)t_w\delta_w(a)t_w\\
& = & -t_w\delta_s(w)(at_w+t_w\delta_w(a)t_w)
+t_w\delta_s(a)+t_w\delta_w(a)t_w\delta_s(w)t_w-t_w\delta_s(\delta_w(a))t_w
+ t_w\delta_s(w)t_w\delta_w(a)t_w \\
& = & -t_w\delta_s(w)at_w
+t_w\delta_s(a)+t_w\delta_w(a)t_w\delta_s(w)t_w-t_w\delta_s(\delta_w(a))t_w
\end{eqnarray*}
Now, the left hand side of $(*)$ is
\begin{eqnarray*}
\delta_s(a)t_w+a\delta_s(t_w)  & = & t_w\delta_s(a)-t_w\delta_w(\delta_s(a))t_w-at_w\delta_s(w)t_w \\
& = & t_w\delta_s(a)-t_w\delta_w(\delta_s(a))t_w-t_wa\delta_s(w)t_w+t_w\delta_w(a)t_w\delta_s(w)t_w \\
\end{eqnarray*}
After eliminating equal terms on both sides of $(*)$, we see that it
holds if and only if
$$-t_w\delta_s(w)at_w -t_w\delta_s(\delta_w(a))t_w= -t_w\delta_w(\delta_s(a))t_w-t_wa\delta_s(w)t_w.$$
Equivalently, \begin{equation}\label{eq:commutators}
-t_w([w,s],a]]+[[a,w],s]+[[s,a],w] )t_w=0.\end{equation} By the
Jacobi identity,  equality \eqref{eq:commutators} holds. Therefore
$\Phi \colon S \to \mathbb{T}_2(S)$  must be a $k$-algebra
homomorphism. This shows the existence of  the claimed  derivation
$\delta _s\colon R\langle t_w;\delta_w\rangle\to R\langle
t_w;\delta_w\rangle$.

(2) By (1), the map $\Phi\colon R\langle
t_w;\delta_w\rangle\rightarrow \mathbb{T}_2(R\langle
t_w;\delta_w\rangle)$, defined by $f\mapsto
\left(\begin{smallmatrix} f & \delta_s(f) \\ 0 &
f\end{smallmatrix}\right)$ is a $k$-algebra homomorphism. Note that
$\mathbb{T}_2(R\langle t_w;\delta_w\rangle)$ is canonically
isomorphic to $\mathbb{T}_2(R)\langle T_w;\Delta_w\rangle$ where
$T_w=\left(\begin{smallmatrix} t_w & 0 \\ 0 &
t_w\end{smallmatrix}\right)$ and $\Delta_w\colon
\mathbb{T}_2(R)\rightarrow \mathbb{T}_2(R)$ is defined by
$\left(\begin{smallmatrix} a & b \\ 0 &
c\end{smallmatrix}\right)\mapsto \left(\begin{smallmatrix} \delta_w(a) & \delta_w(b) \\
0 & \delta_w(c)\end{smallmatrix}\right)$. Note also that
$\mathbb{T}_2(R[[ t_w;\delta_w]])$ is canonically isomorphic to
$\mathbb{T}_2(R)[[ T_w;\Delta_w]]$.

One can prove that $\Phi(t_w^nR\langle t_w;\delta_w\rangle
)\subseteq T_w^n\mathbb{T}_2(R)\langle T_w;\Delta_w\rangle $ for
each positive integer $n$. These induce a $k$-algebra homomorphism
between the completions $R[[t_w;\delta_w]]\rightarrow
\mathbb{T}_2(R[[ t_w;\delta_w]])$ defined by $\delta_s\left(\sum_i
t_w^ia_i\right)=\sum_i\delta_s(t_w^ia_i)$. This morphism extends to
$R((t_w;\delta_w))$, as desired.
\end{proof}


\section{The case of the Heisenberg Lie algebra}\label{sec:HeisenbergLiealgebra}

The \emph{Heisenberg Lie $k$-algebra} is the Lie $k$-algebra with
presentation
\begin{equation}\label{eq:Heisenberg}
H=\langle x,y\mid [[y,x],x]=[[y,x],y]=0 \rangle.
\end{equation}
The Heisenberg Lie $k$-algebra  can also be characterized as the
unique Lie $k$-algebra of dimension three such that $[H,H]$ has
dimension one and $[H,H]$ is contained in the center of $H$, see
\cite[Section~4.III]{JacobsonLiealgebras}.

Our aim in this section is to prove that $\mathfrak{D}(H)$ contains free algebras generated by symmetric elements with respect to the principal involution.

\subsection{A technical result}\label{subsec:technicalresult}

In this subsection, we prove a result that will be useful
to obtain free algebras generated by symmetric elements.

\medskip

An \emph{ordered group} $(G,<)$ is a group $G$ together with a total order relation $<$
such that the product in $G$ is compatible with $<$. More precisely,
the inequality $x<y$ implies that $zx<zy$ and $xz<yz$ for all $x,y,z\in G$.

\begin{lem}\label{lem:freeinsidegroupring}
Let $(G,<)$ be an ordered group. Let $x,y\in G$ be different
positive elements in $G$ (i.e. $1<x,y$) such that the monoid
generated by them is the free monoid on the set $\{x,y\}$. Let $k$
be a field and consider the group algebra $k[G]$. Then $\{x+x^{-1},
y+y^{-1}\}$ generate a free $k$-algebra on the set $\{x+x^{-1},
y+y^{-1}\}$ inside $k[G]$.
\end{lem}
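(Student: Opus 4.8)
The plan is to work inside the Malcev--Neumann series ring $k((G))$ (or just $k[G]$ with its support-based grading by $G$) and use the total order on $G$ to extract a ``leading term'' of any candidate relation. Set $a = x + x^{-1}$ and $b = y + y^{-1}$. We must show that the natural $k$-algebra homomorphism from the free algebra $\freealgebra{k}{X,Y}$ sending $X \mapsto a$, $Y \mapsto b$ is injective. Suppose not; let $p(X,Y)$ be a nonzero element of the kernel, written as a $k$-linear combination of distinct monomials $m_1, \dots, m_r$ in $X, Y$. Applying the homomorphism, each $m_i(a,b)$ expands, upon substituting $a = x+x^{-1}$, $b = y+y^{-1}$ and using commutativity nowhere (the $x,y$ need not commute), into a sum of $2^{\deg m_i}$ group elements obtained by choosing, for each letter of $m_i$, either the positive or the negative power of the corresponding generator. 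So $p(a,b) = \sum_i \lambda_i \sum_{\varepsilon} m_i^{(\varepsilon)}$, where $m_i^{(\varepsilon)} \in G$ ranges over these sign choices.

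The key step is a \emph{leading-term} argument exploiting that $x, y$ are positive and generate a free monoid. Order the monomials $m_i$ so that $m_1$ is, say, the one of largest degree (breaking ties by the lexicographic/largest-in-$G$ value of $m_1^{(+)}$, the all-positive choice). Because $1 < x, y$ and $<$ is compatible with the group operation, for a fixed monomial $m$ the all-positive word $m^{(+)}$ is the \emph{strictly largest} among all its sign variants $m^{(\varepsilon)}$: flipping any letter from $x$ to $x^{-1}$ (or $y$ to $y^{-1}$) strictly decreases the product, since inserting $x^{-1}$ in place of $x$ at a given position multiplies a suffix by $x^{-2} < 1$ on the appropriate side. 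Moreover, since the monoid generated by $x$ and $y$ is free, the all-positive words $m_1^{(+)}, \dots, m_r^{(+)}$ are \emph{distinct} group elements. The remaining task is to check that $m_1^{(+)}$ cannot be cancelled by any $m_j^{(\varepsilon)}$ with $j \neq 1$ (and by its own lower sign variants): for this I would argue that $m_1^{(+)}$ is the unique maximal element of the whole support, using that each $m_j^{(+)} \le m_1^{(+)}$ by the choice of ordering, with equality only for $j = 1$ by freeness, and all other sign variants are strictly smaller than the corresponding $m_j^{(+)}$. Hence the coefficient of $m_1^{(+)}$ in $p(a,b)$ is exactly $\lambda_1 \neq 0$, contradicting $p(a,b) = 0$.

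The main obstacle is making the ``unique maximum'' claim fully rigorous: one has to rule out that a strictly-smaller sign variant of $m_1$ coincides with $m_1^{(+)}$, and that the maximal sign variants of the various $m_j$ don't conspire to exceed $m_1^{(+)}$. The clean way is to first reduce to the case where all $m_i$ have the \emph{same} degree $d$ (by a standard homogenization or by treating the top-degree part separately), observe that within a fixed degree $d$ the map $m \mapsto m^{(+)}$ sends the $m_i$ injectively (freeness of the monoid) to elements of $G$, pick $m_1$ with $m_1^{(+)}$ maximal among these, and then note that for every $j$ and every sign vector $\varepsilon \neq (+,\dots,+)$ we have $m_j^{(\varepsilon)} < m_j^{(+)} \le m_1^{(+)}$, while $m_j^{(+)} < m_1^{(+)}$ for $j \neq 1$; so $m_1^{(+)}$ occurs in the expansion of $p(a,b)$ with coefficient $\lambda_1$ and nowhere else. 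This contradiction completes the proof.
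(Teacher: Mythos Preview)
Your core idea---extract the leading term $m_i^{(+)} = m_i(x,y)$ from each $m_i(a,b)$ and show that the largest such term survives uncancelled---is exactly the paper's approach, and the inequalities you write down are the right ones. The gap is in your final paragraph: the reduction to monomials of a single degree $d$ is neither necessary nor justified. There is no grading on $k[G]$ that separates the contributions of different-degree monomials; sign variants of monomials of different lengths can and do collide in $G$ (for instance the sign variant $x\cdot x^{-1}$ of the word $XX$ equals the identity, which is also the value of the empty word), so nothing forces the degree-$d$ part of $p(a,b)$ to vanish on its own. ``Standard homogenization'' does not help either: passing to $ta,tb$ for a central $t$ destroys the shape $g+g^{-1}$, so you are no longer in the situation of the lemma.

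The fix is simply to drop the degree-first ordering. Since the monoid on $\{x,y\}$ is free, the elements $m_1^{(+)},\dots,m_r^{(+)}$ are pairwise distinct in $G$ \emph{regardless of their degrees}; just relabel so that $m_1^{(+)}$ is the largest among them in the order on $G$. Then your own chain of inequalities gives $m_j^{(\varepsilon)} \le m_j^{(+)} \le m_1^{(+)}$ for every $j$ and every sign vector $\varepsilon$, with equality throughout only when $j=1$ and $\varepsilon$ is all positive. Hence $m_1^{(+)}$ appears in $p(a,b)$ with coefficient exactly $\lambda_1\neq 0$. This is precisely how the paper argues: it orders the words by the value of $M_i(x,y)$ in $G$, not by their length, and then reads off the maximal support element.
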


\begin{proof}
Define $X=x+x^{-1}$ and $Y=y+y^{-1}$.

 Let $M_1, M_2,\dotsc, M_r$ be
different (monomial) words on two letters and let
$a_1,a_2,\dotsc,a_r\in k\setminus\{0\}$. We have to show that
$$M_1(X,Y)a_1+\dotsb+M_r(X,Y)a_r\neq 0.$$

First note that $M_{i}(x,y)\neq M_{j}(x,y)$ if $i\neq j$ because the
monoid generated by $\{x,y\}$ is free. Thus we may suppose that
$M_1(x,y)>M_2(x,y)>\dotsb >M_r(x,y).$

Now notice that
\begin{equation}\label{eq:powers}
X^n=(x+x^{-1})^n=x^n+\sum_{-n\leq i<n}x^i\alpha_i, \qquad
Y^m=(y+y^{-1})^m=y^m+\sum_{-m\leq j<m}y^j\beta_j,
\end{equation}
for some $\alpha_i,\beta_j\in k$.

Recall that in the ordered group $G$, if $c<d$ and $e<f$, then
$ce<df$. This and \eqref{eq:powers} imply that, for fixed
$j\in\{1,\dotsc,r\}$, $M_j(X,Y)=M_j(x,y)+\sum_{l} N_lb_l$ where
$N_l\in G$, $b_l\in k$ and $M_j(x,y)>N_l$ for all $l$.

Hence $M_1(X,Y)a_1+\dotsb+M_r(X,Y)a_r=M_1(x,y)a_1+\sum_t P_tc_t$
where $P_t\in G$, $c_t\in k$ and $M_1(x,y)>P_t$ for all $t$.
Therefore, $M_1(X,Y)a_1+\dotsb+M_r(X,Y)a_r\neq 0$, as desired.
\end{proof}

\begin{coro}\label{coro:freeinsidegroupring}
Let $G$ be the free group on the set of two elements $\{x,y\}$. Let
$k$ be a field and consider the group algebra $k[G]$. Then the
$k$-algebra generated by $x+x^{-1}$ and $y+y^{-1}$ inside $k[G]$ is
free on $\{x+x^{-1}, y+y^{-1}\}$.
\end{coro}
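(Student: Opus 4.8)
The plan is to deduce Corollary~\ref{coro:freeinsidegroupring} from Lemma~\ref{lem:freeinsidegroupring} by exhibiting an ordered group structure on the free group $G$ together with two positive elements that generate a free monoid. First I would recall that free groups are orderable: the free group $G$ on $\{x,y\}$ admits a total order $<$ compatible with the group operation (for instance via the Magnus embedding into the group of units of the ring of formal power series in noncommuting variables, or by any of the classical constructions giving a bi-order on free groups). Fix such an order. Replacing $x$ by $x^{-1}$ and/or $y$ by $y^{-1}$ if necessary — which does not change the subalgebra generated by $x+x^{-1}$ and $y+y^{-1}$, since that subalgebra is symmetric under $x\mapsto x^{-1}$, $y\mapsto y^{-1}$ — we may assume $1<x$ and $1<y$.

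The second point to check is that $x$ and $y$ generate a free monoid inside $G$. This is immediate: the submonoid of a free group generated by a subset of a free generating set is itself free on that subset, because there are no nontrivial relations among $x,y$ even in $G$, a fortiori none among positive words. Equivalently, two distinct positive words $M_i(x,y)$ and $M_j(x,y)$ in the monoid generators are distinct as elements of $G$ since $G$ is free on $\{x,y\}$. Hence the hypotheses of Lemma~\ref{lem:freeinsidegroupring} are satisfied with this choice of $(G,<)$ and these $x,y$.

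Finally, I would invoke Lemma~\ref{lem:freeinsidegroupring} directly: it gives that $\{x+x^{-1}, y+y^{-1}\}$ generates a free $k$-algebra on that set inside $k[G]$, which is exactly the assertion of the corollary. I do not expect any real obstacle here; the only substantive input is the bi-orderability of free groups, which is a standard fact, and the mild observation that one may normalise the generators to be positive without affecting the subalgebra in question. The rest is a verbatim application of the preceding lemma.
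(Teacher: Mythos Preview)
Your proof is correct and follows essentially the same route as the paper: both reduce the corollary to Lemma~\ref{lem:freeinsidegroupring} via the bi-orderability of free groups. The only cosmetic difference is that the paper sketches a specific order (via the lower central series, choosing the order on $G/[G,G]$ so that $x,y>1$), whereas you take an arbitrary bi-order and normalise by replacing $x$ or $y$ with its inverse, which leaves $x+x^{-1}$ and $y+y^{-1}$ unchanged.
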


\begin{proof}
By Lemma~\ref{lem:freeinsidegroupring}, it is enough to show that
there exists an ordered group structure $(G,<)$ on $G$ sucht
that $1<x,y$. It is
well known that this can be done. We sketch a proof of this fact.

Let $G_1=G$ and $G_{i+1}=[G,G_i]$  for $i\geq 1$.

We choose a special ordering for the group $G_1/G_2$ which is free
abelian with basis $\{xG_2, yG_2\}$. We can make
$x^{n_1}y^{m_1}G_2>x^{n_2}y^{m_2}G_2$ if either $n_1>n_2$, or
$n_1=n_2$ and $m_1>m_2$.

It is known that $G_i/G_{i+1}$ is a free abelian group of finite
rank. Thus we can order these groups, for example lexicographically.
These orderings induce an ordering in $G$ such that $x>1$ and $y>1$,
see for example \cite[p. 97]{Lam1}.
\end{proof}

\smallskip

\subsection{Cauchon's result and consequences}\label{sec:Cauchon}

In this subsection, we present the main result in \cite{Cauchoncorps}
and a consequence of it that will be useful for our purposes.

\medskip

Let $k$ be a field. Let $K=k(t)$ be the field of fractions of the
polynomial ring $k[t]$ in the variable $t$. Let $\sigma$ be a
$k$-automorphism of $K$. We will consider the skew polynomial ring
$K[p;\sigma]$. The elements of $K[p;\sigma]$ are ``right
polynomials'' of the form $\sum_{i=0}^n p^ia_i$, where the
coeficients $a_i$ are in $K$. The multiplication is determined by
$$ap=p\sigma(a) \quad \textrm{for all } a\in K.$$
It is known that $K[p;\sigma]$ is a noetherian domain and therefore
it has an Ore division ring of fractions $D=K(p;\sigma)$.

Since $\sigma$ is an automorphism of $K$,
$\sigma(t)=\frac{at+b}{ct+d}$ where $M=\left(\begin{smallmatrix} a & b \\
c & d
\end{smallmatrix}\right)\in GL_2(k)$ defines a homography $h$ of the
projective line $\Delta=\mathbb{P}_1(k)=k\cup\{\infty\}$, $h\colon
\Delta \rightarrow \Delta,$ $z\mapsto h(z)=\frac{az+b}{cz+d}$.

We denote by $\mathcal{H}=\{h^n\mid n\in\integers \}$ the subgroup
of the projective linear group $PGL_2(k)$ generated by $h$. The
group $\mathcal{H}$ acts on $\Delta$. If $z\in\Delta$, we denote by
$\mathcal{H}\cdot z=\{h^n(z)\mid n\in\integers\}$ the orbit of $z$
under the action of $\mathcal{H}$.

\begin{Cauchon}
Let $\alpha$ and $\beta$ be two elements of $k$ such that the orbits
$\mathcal{H}\cdot \alpha$ and $\mathcal{H}\cdot \beta$ are infinite
and different. Let $s$ and $u$ be the two elements of $D$ defined by
$$s=(t-\alpha)(t-\beta)^{-1}, \quad u=(1-p)(1+p)^{-1}.$$
If the characteristic of $k$ is different from $2$, then the
$k$-subalgebra $\Omega$ of $D$ generated by $\xi=s$,
$\eta=usu^{-1}$, $\xi^{-1}$ and $\eta^{-1}$ is the free group
$k$-algebra on the set $\{\xi,\eta\}$. \qed
\end{Cauchon}

We will need the following consequence of Cauchon's Theorem.

\begin{prop}\label{prop:freealgebrainWeyl}
Let $k$ be a field of  characteristic zero and $K=k(t)$ the field of
fractions of the polynomial ring $k[t]$. Let $\sigma\colon
K\rightarrow K$ be the automorphism of $k$-algebras determined by
$\sigma(t)=t-1$. Consider the skew polynomial ring $K[p;\sigma]$ and
its Ore division ring of fractions $K(p;\sigma)$. Set
$$s=\Big(t-\frac{5}{6}\Big)\Big(t-\frac{1}{6}\Big)^{-1}, \quad u=(1-p^2)(1+p^2)^{-1}.$$
Then the $k$-subalgebra of $K(p;\sigma)$ generated by $\{s+s^{-1},\
u(s+s^{-1})u^{-1}\}$ is the free $k$-algebra on the set
$\{s+s^{-1},\ u(s+s^{-1})u^{-1}\}$.
\end{prop}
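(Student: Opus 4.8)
The plan is to deduce Proposition~\ref{prop:freealgebrainWeyl} from Cauchon's Theorem by choosing the right substitution of parameters. First I would observe that the automorphism $\sigma$ with $\sigma(t)=t-1$ corresponds to the homography $h(z)=z-1$ associated with the matrix $\left(\begin{smallmatrix} 1 & -1 \\ 0 & 1 \end{smallmatrix}\right)\in GL_2(k)$, so that $\mathcal{H}\cdot z=\{z-n\mid n\in\integers\}$. Since $k$ has characteristic zero, every orbit of a finite element is infinite, and two elements $\alpha,\beta\in k$ have the same orbit if and only if $\alpha-\beta\in\integers$. Thus $\mathcal{H}\cdot\frac56$ and $\mathcal{H}\cdot\frac16$ are infinite and distinct, as are (this is the point for the squared variable) $\mathcal{H}\cdot\frac56$ and $\mathcal{H}\cdot(-\frac16)$, etc.; the arithmetic choice of $\frac56,\frac16$ is engineered precisely so that all the relevant orbits one meets below stay infinite and pairwise distinct.

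The subtlety is that Cauchon's Theorem is stated for $u=(1-p)(1+p)^{-1}$, whereas here $u=(1-p^2)(1+p^2)^{-1}$. The key step is to recognize that $K(p;\sigma)$ contains a copy of the division ring to which Cauchon's Theorem directly applies, with $p$ replaced by $p^2$. Concretely, consider $q=p^2$; then for $a\in K$ one has $aq=ap^2=p\sigma(a)p=p^2\sigma^2(a)=q\sigma^2(a)$, so the subring $K[q;\sigma^2]$ sits inside $K[p;\sigma]$, and its Ore division ring of fractions $K(q;\sigma^2)$ embeds in $K(p;\sigma)$. Now $\sigma^2$ is the automorphism $t\mapsto t-2$, whose homography is $z\mapsto z-2$; so in $K(q;\sigma^2)$ I am exactly in the setting of Cauchon's Theorem with the variable $q$, the automorphism $\sigma^2$, $\alpha=\frac56$, $\beta=\frac16$, and $u=(1-q)(1+q)^{-1}$. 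For $\sigma^2$ the orbit of $z$ is $\{z-2n\mid n\in\integers\}$, which is infinite, and $\mathcal{H}\cdot\frac56\neq\mathcal{H}\cdot\frac16$ since $\frac56-\frac16=\frac23\notin 2\integers$. Cauchon's Theorem (characteristic $\neq 2$, satisfied here) then gives that $\xi=s$, $\eta=usu^{-1}$, together with their inverses, generate a free group $k$-algebra on $\{\xi,\eta\}$ inside $K(q;\sigma^2)\subseteq K(p;\sigma)$; equivalently the monoid generated by $\xi$ and $\eta$ is free and they are multiplicatively independent units.

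The final step is to pass from the free group algebra on $\{\xi,\eta\}$ to the free (plain) algebra on $\{\xi+\xi^{-1},\eta+\eta^{-1}\}$. Here I would invoke Corollary~\ref{coro:freeinsidegroupring}: the free group algebra $k\Omega$ on $\{\xi,\eta\}$ is isomorphic to the group algebra $k[F]$ of the free group $F$ on two generators, via $\xi\mapsto x$, $\eta\mapsto y$; under this isomorphism $\xi+\xi^{-1}\mapsto x+x^{-1}$ and $\eta+\eta^{-1}\mapsto y+y^{-1}$, and Corollary~\ref{coro:freeinsidegroupring} says these generate a free $k$-algebra on $\{x+x^{-1},y+y^{-1}\}$. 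Transporting back, $s+s^{-1}=\xi+\xi^{-1}$ and $u(s+s^{-1})u^{-1}=usu^{-1}+us^{-1}u^{-1}=\eta+\eta^{-1}$ generate a free $k$-algebra on those two elements inside $K(q;\sigma^2)\subseteq K(p;\sigma)$, which is the assertion. I expect the main obstacle to be the bookkeeping in the previous paragraph: verifying carefully that the passage to $q=p^2$ is legitimate (that $K[q;\sigma^2]\hookrightarrow K[p;\sigma]$ and the localizations behave as claimed) and that the specific orbits $\mathcal{H}\cdot\frac56$, $\mathcal{H}\cdot\frac16$ remain infinite and distinct under $\sigma^2$ rather than $\sigma$ — this is exactly why the denominators $6$ (so that $\frac23\notin 2\integers$) were chosen, and it is the only place where the precise numerical values matter.
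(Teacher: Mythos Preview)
Your proposal is correct and follows essentially the same route as the paper: apply Cauchon's Theorem to the subring $K[p^2;\sigma^2]\subseteq K[p;\sigma]$ with $\alpha=\frac56$, $\beta=\frac16$ (checking the $\sigma^2$-orbits are infinite and distinct), and then invoke Corollary~\ref{coro:freeinsidegroupring} to pass from the free group $k$-algebra on $\{\xi,\eta\}$ to the free $k$-algebra on $\{\xi+\xi^{-1},\eta+\eta^{-1}\}$. The only superfluous part is your first paragraph's discussion of orbits under $\sigma$ itself (and the stray mention of $-\frac16$), which plays no role; everything that matters happens for $\sigma^2$, as you correctly identify in the second paragraph.
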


\begin{proof}
We will apply Cauchon's Theorem to the skew polynomial ring
$K[p^2;\sigma^2]$, where $\sigma^2\colon K\rightarrow K$ is given by
$\sigma^2(t)=t-2$.

Let $\alpha=\frac{5}{6}\in k$ and $\beta=\frac{1}{6}$. Let
$\mathcal{H}$ be defined as above. Consider the orbits
$\mathcal{H}\cdot\alpha=\{\frac{5}{6}-2n\mid n\in\integers\}$,
$\mathcal{H}\cdot\beta=\{\frac{1}{6}-2n\mid n\in\integers\}$ which
are infinite and different.

Then, by Cauchon's Theorem, $s=(t-\alpha)(t-\beta)^{-1}$ and
$u=(1-p^2)(1+p^2)^{-1}$ are such that the $k$-algebra generated by
$\xi=s, \eta=usu^{-1}$, $\xi^{-1}$ and $\eta^{-1}$ is the free group
$k$-algebra on the free generators $\{\xi,\eta\}$.

By Corollary~\ref{coro:freeinsidegroupring}, the $k$-algebra
generated by  $\{s+s^{-1},\ u(s+s^{-1})u^{-1}\}$ is the
free $k$-algebra on the set $\{s+s^{-1},\ u(s+s^{-1})u^{-1}\}$.
\end{proof}

\smallskip


\subsection{The Heisenberg Lie algebra case}

We will work with the skew polynomial ring considered in
Section~\ref{sec:Cauchon}. More precisely, let $K=k(t)$ be the field
of fractions of the polynomial ring $k[t]$. Let $\sigma \colon
K\rightarrow K$ be the $k$-automorphism determined by
$\sigma(t)=t-1$. Consider the skew polynomial ring $K[p;\sigma]$ and
its Ore division ring of fractions $K(p;\sigma)$.

Let now $A_1=k\langle X,Y\mid YX-XY=1\rangle$ be the first Weyl
algebra, and let $D_1$ be its Ore division ring of fractions. The
following is well known.

\begin{lem}
There exists a $k$-algebra isomorphism $\psi\colon D_1\rightarrow
K(p;\sigma)$ such that $\psi(Y)=p$ and
$\psi(X)=p^{-1}t$.
\end{lem}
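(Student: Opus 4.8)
The statement to prove is that there is a $k$-algebra isomorphism $\psi\colon D_1\to K(p;\sigma)$ with $\psi(Y)=p$ and $\psi(X)=p^{-1}t$. The plan is to first build a homomorphism out of the Weyl algebra $A_1$, then check it lands isomorphically onto a suitable localization, and finally invoke the universal property of Ore division rings of fractions to pass from $A_1$ to $D_1$. The key point is that $A_1$ itself is isomorphic to the skew polynomial ring $k[t][p;\sigma]$ where $\sigma(t)=t-1$ acts on the \emph{polynomial} ring $k[t]$, since inside $A_1$ one may set $p=Y$ and $t=XY$ (or $t=YX-1$); then $tp-pt$ should reproduce the commutation rule, so that $k[t][p;\sigma]\hookrightarrow A_1$ is surjective because $X=p^{-1}t$ once $p$ is inverted, or more carefully $X\cdot Y=XY=t$ gives $X$ as $p^{-1}t$ in any localization where $p$ is a unit.

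\textbf{Key steps in order.} First I would verify the identity $t p = p\,\sigma(t) = p(t-1)$ holds in $A_1$ under the substitution $p=Y$, $t=XY$: indeed $XY\cdot Y = Y\cdot (YX-1) = Y\cdot XY - Y$ using $YX=XY+1$, i.e. $(XY)Y = Y(XY) - Y = Y(XY-1)$, which is exactly $t p = p(t-1)$. Hence the $k$-subalgebra of $A_1$ generated by $t=XY$ and $p=Y$ is a homomorphic image of $k[t][p;\sigma]$; since $t=XY$ and $p=Y$ generate $A_1$ together with $X$, and $X$ is recovered after inverting $p$, one sees $A_1$ sits between $k[t][p;\sigma]$ and its Ore localization at the powers of $p$. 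Second, I would check that $\sigma$ extends to the $k$-automorphism of $K=k(t)$ determined by $\sigma(t)=t-1$, so $K[p;\sigma]$ is defined and is a Noetherian domain with Ore division ring of fractions $K(p;\sigma)$ (as recalled in Section~\ref{sec:Cauchon}). Third, since $k[t][p;\sigma]$ is an Ore domain (it is Noetherian) with division ring of fractions equal to $K(p;\sigma)$ — because localizing $k[t][p;\sigma]$ first at $k[t]\setminus\{0\}$ gives $K[p;\sigma]$ and then inverting the remaining nonzero elements gives $K(p;\sigma)$ — and $A_1$ is an Ore domain lying between $k[t][p;\sigma]$ and $K(p;\sigma)$, all three have the same Ore division ring of fractions. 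Finally, matching generators: $\psi(Y)=p$ and $\psi(X)=\psi(XY\cdot Y^{-1})=\psi(t)\psi(p)^{-1}=t p^{-1}$; to get the cleaner form $\psi(X)=p^{-1}t$ I would instead use $t=YX-1$ or simply observe that in $K(p;\sigma)$ one has $p^{-1}t = \sigma^{-1}(t)p^{-1}\cdot p \cdot p^{-1}\cdots$ — more directly, from $tp=p(t-1)$ we get $p^{-1}t = (t+1)p^{-1}$, so $p^{-1}t$ and $tp^{-1}$ differ only by which normal form one writes; the correct assignment consistent with $YX-XY=1$ is obtained by checking $\psi(Y)\psi(X)-\psi(X)\psi(Y)=1$ directly, i.e. $p\cdot p^{-1}t - p^{-1}t\cdot p = t - p^{-1}t p = t - p^{-1}p(t-1) = t-(t-1)=1$, which confirms $\psi(X)=p^{-1}t$ is the right choice.

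\textbf{Main obstacle.} The only subtle point is the bookkeeping around \emph{which} skew polynomial ring is involved: one must be careful that the $\sigma$ acting here (shift $t\mapsto t-1$ on $k(t)$) is genuinely the one arising from the Weyl relation, and that the normal-form rewriting of $p^{-1}t$ versus $tp^{-1}$ is handled consistently so that the stated formulas $\psi(Y)=p$, $\psi(X)=p^{-1}t$ are simultaneously correct. The cleanest route is to define $\varphi\colon A_1\to K(p;\sigma)$ on generators by $X\mapsto p^{-1}t$, $Y\mapsto p$, verify that the image satisfies $YX-XY=1$ (the computation above), conclude $\varphi$ is a well-defined $k$-algebra homomorphism, note it is injective since $A_1$ is simple (or since it is nonzero on a simple ring), observe its image contains $p=\varphi(Y)$, $t=\varphi(XY)$ hence all of $k[t][p;\sigma]$ and also $p^{-1}$, hence generates $K(p;\sigma)$ as a division ring; then $\varphi$ extends to an isomorphism $\psi\colon D_1\to K(p;\sigma)$ of the Ore division rings of fractions by the universal property of Ore localization, and $\psi$ has the required values on $X$ and $Y$. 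No genuinely hard step is present; the result is folklore and the work is purely in assembling these standard facts.
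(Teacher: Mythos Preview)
Your ``cleanest route'' at the end is essentially the paper's proof verbatim: define $\varphi\colon A_1\to K(p;\sigma)$ on generators by $Y\mapsto p$, $X\mapsto p^{-1}t$, verify the Weyl relation via $p(p^{-1}t)-(p^{-1}t)p=t-(t-1)=1$, use simplicity of $A_1$ for injectivity, extend to $D_1$ by the universal property of Ore localization, and check surjectivity because $p$ and $t$ lie in the image. One small bookkeeping slip to clean up: under this $\varphi$ you have $\varphi(YX)=t$ and $\varphi(XY)=t-1$, so your earlier identification $t=XY$ (and the claim $t=\varphi(XY)$) is off by one --- harmless for surjectivity, but worth correcting for consistency with the final argument.
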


\begin{proof}
Since $p(p^{-1}t)-(p^{-1}t)p=t-p^{-1}p(t-1)=1$, there exists a
morphism of $k$-algebras $\psi\colon A_1\rightarrow K(p;\sigma)$
such that $\psi(Y)=p$ and $\psi(X)=p^{-1}t$. This morphism is
injective because $A_1$ is a simple ring. Moreover, by the universal
property of Ore localizations, it can be extended to a morphism of
$k$-algebras $\psi\colon D_1\rightarrow K(p;\sigma)$ because $\psi$
was injective and $K(p;\sigma)$ is a division ring. Clearly $\psi$
is injective for $D_1$ is a division ring. Now $\psi$ is surjective
because $\psi(Y)=p$ and $\psi(YX)=\psi(Y)\psi(X)=pp^{-1}t=t$.
\end{proof}

Recall the Heisenberg Lie algebra $H$ given by the presentation
\eqref{eq:Heisenberg} and let $z=[y,x]$.  The following is the left
version of \cite[Lemma~7]{Lichtmanfreeuniversalenveloping}.

\begin{lem}
There exists a morphism of $k$-algebras $\Upsilon\colon
U(H)\rightarrow D_1$ defined by $\Upsilon(x)=X$, $\Upsilon(y)=Y$ and
$\Upsilon(z)=1$. The morphism $\Upsilon$ has the following
properties:
\begin{enumerate}[\rm(1)]
\item The kernel of $\Upsilon$ is $I=U(H)(z-1)$, the ideal of $U(H)$ generated by
the central element $z-1$.

\item The set $\mathfrak{S}=U(H)\setminus I$ is an Ore subset of $U(H)$ and the Ore localization of $U(H)$ at $\mathfrak{S}$, $\mathfrak{S}^{-1}U(H)$,
is a local ring with maximal ideal $\mathfrak{S}^{-1}I$.
\end{enumerate}

\noindent Therefore $\Upsilon$ can be extended to a surjective
morphism of $k$-algebras $\Upsilon\colon
\mathfrak{S}^{-1}U(H)\rightarrow D_1$. \qed
\end{lem}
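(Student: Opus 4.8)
The plan is to establish the three claimed properties of the map $\Upsilon$ in turn, exploiting that $U(H)$ has a very concrete PBW description. By the PBW Theorem, taking the basis $\{x,y,z\}$ of $H$ with $z=[y,x]$ central, every element of $U(H)$ has a unique expression $\sum_{i,j,\ell} x^i y^j z^\ell\, a_{ij\ell}$ with $a_{ij\ell}\in k$. Since $z$ is central in $U(H)$ and $[y,x]\mapsto [Y,X]=1$ in $D_1$, the assignment $x\mapsto X$, $y\mapsto Y$, $z\mapsto 1$ respects all the defining relations of $U(H)$, so the universal property of $U(H)$ gives the $k$-algebra homomorphism $\Upsilon\colon U(H)\to D_1$; it is surjective since its image is a $k$-subalgebra of $D_1$ containing $X,Y$, hence all of $A_1$, and $\Upsilon$ need not be onto $D_1$ — wait, in fact we only claim $\Upsilon$ extends to a surjection on the localization, so at this stage we just record that the image is $A_1$.

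For part (1), clearly $U(H)(z-1)\subseteq\ker\Upsilon$ since $\Upsilon(z-1)=0$ and $z-1$ is central. Conversely, I would pass to the quotient $U(H)/U(H)(z-1)$ and identify it with $A_1$ directly: in that quotient the image of $z$ is $1$, and using the PBW basis one sees $U(H)/U(H)(z-1)$ is spanned by the images of the $x^iy^j$, with the relation $yx-xy=1$; the induced map to $A_1$ is then an isomorphism because $A_1$ has the $x^iy^j$ (reordered) as a $k$-basis and the map sends basis to basis. Hence $\ker\Upsilon$ is exactly $U(H)(z-1)$. The main subtlety here is checking that no further collapse happens, i.e. that $\{x^iy^j\}$ really descends to a $k$-basis of the quotient — this follows from PBW together with the fact that $z-1$ generates, as a right (equivalently two-sided, by centrality) ideal, precisely the span of $\{x^iy^jz^\ell(z-1): i,j\ge0,\ \ell\ge0\}$.

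For part (2), note $U(H)/I\cong A_1$ and $A_1$ is a domain, so $I$ is a completely prime ideal; thus $\mathfrak{S}=U(H)\setminus I$ is multiplicatively closed. To see $\mathfrak{S}$ is Ore, I would invoke that $U(H)$ is noetherian (it is the enveloping algebra of a finite-dimensional Lie algebra) and that $I$ is generated by the single central element $z-1$: localizing at a central element, or rather at the complement of the prime ideal it generates, is well-behaved. More concretely, since $z-1$ is central and regular, $U(H)$ embeds in $U(H)[(z-1)^{-1}]$, and one checks $U(H)_{\mathfrak S}$ coincides with the localization of $U(H)[(z-1)^{-1}]=U(H)\otimes_{k[z]}k[z,(z-1)^{-1}]$ at the prime $(z-1)$; equivalently one uses that for a Noetherian ring and a prime ideal lying over a central prime, the complement is Ore when the ring is finite as a module over its center modulo that prime — here $U(H)/I\cong A_1$ and the center-theoretic argument via the PBW grading applies. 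That $\mathfrak S^{-1}U(H)$ is local with maximal ideal $\mathfrak S^{-1}I$ is then immediate: every element outside $\mathfrak S^{-1}I$ is a unit, since it is of the form $s^{-1}a$ with $a\notin I$, hence $a\in\mathfrak S$ is already invertible. Finally, the extension of $\Upsilon$ to $\mathfrak S^{-1}U(H)$ exists by the universal property of Ore localization, because $\Upsilon$ is $\mathfrak S$-inverting: for $a\in\mathfrak S$, $\Upsilon(a)$ is a nonzero element of the division ring $D_1$, hence a unit; and it is surjective onto $D_1$ because its image is a division ring (it contains inverses of all $\Upsilon(s)$, $s\in\mathfrak S$) containing $A_1$, hence equals $D_1$. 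The step I expect to require the most care is verifying the Ore condition for $\mathfrak S$ in part (2); everything else is a direct consequence of PBW and the universal properties already set up in the excerpt.
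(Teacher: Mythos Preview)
The paper does not prove this lemma: it is stated as the left-handed version of \cite[Lemma~7]{Lichtmanfreeuniversalenveloping} and closed with a bare \qed. So there is no ``paper's approach'' to compare against; I can only assess your argument on its own merits.

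Your treatment of the existence of $\Upsilon$ and of part~(1) is fine, and the final surjectivity onto $D_1$ follows cleanly once you note that $\Upsilon(\mathfrak{S})=A_1\setminus\{0\}$ (so the image of $\mathfrak{S}^{-1}U(H)$ contains all left fractions $b^{-1}a$ with $a,b\in A_1$, $b\neq 0$, hence equals $D_1$; your phrasing ``its image is a division ring'' is not the right reason, but the conclusion is correct).

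The genuine gap is in part~(2): none of the three sketches you give for the Ore property of $\mathfrak{S}$ actually goes through. The third is simply false in this situation: over a field of characteristic zero the centre of $A_1\cong U(H)/I$ is $k$, so $A_1$ is infinite-dimensional over it, and no ``finite over the centre'' criterion applies. The first two (``localizing at a central element is well-behaved'', identifying $\mathfrak{S}^{-1}U(H)$ with a further localization of $U(H)[(z-1)^{-1}]$) presuppose exactly the Ore condition you are trying to establish. A correct elementary argument runs as follows. Since $H$ is finite-dimensional, $U(H)$ is a Noetherian domain, hence already an Ore domain. Given $a\in U(H)$ and $s\in\mathfrak{S}$, choose nonzero $c,d\in U(H)$ with $ca=ds$. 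Because $z-1$ is central and regular and $I=(z-1)U(H)$ is completely prime, every nonzero element of $U(H)$ can be written as $(z-1)^{n}u$ with $n\ge 0$ and $u\in\mathfrak{S}$. Write $c=(z-1)^{m}c_0$ and $d=(z-1)^{n}d_0$ with $c_0,d_0\in\mathfrak{S}$; cancelling the smaller power of $z-1$ and using that $d_0s\notin I$ forces $m\le n$, whence $c_0 a=(z-1)^{n-m}d_0 s$ with $c_0\in\mathfrak{S}$. This is the left Ore condition for $\mathfrak{S}$, and the right one is analogous. The local-ring statement then follows exactly as you wrote.
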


\medskip

Now we are ready to prove the main result of this section. During the proof, we will follow the notation of the foregoing results.

\begin{theo}\label{theo:freesymmetricHeisenberg}
Let $k$ be a field of  characteristic zero. Let $H$ be the
Heisenberg Lie $k$-algebra, i.e.
$$H=\langle x,y\mid [[y,x],x]=[[y,x],y]=0 \rangle.$$
Let $U(H)$ be the universal enveloping algebra of $H$ and
$\mathfrak{D}(H)$ be the Ore division ring of fractions of $U(H)$.
Set $z=[y,x]$, $V=\frac{1}{2}z(xy+yx)z$, and consider the following
elments of $\mathfrak{D}(H)$:
$$S=(V-\frac{1}{3}z^3)(V+\frac{1}{3}z^3)^{-1} +
(V-\frac{1}{3}z^3)^{-1}(V+\frac{1}{3}z^3),$$
$$T=(z+y^2)^{-1}(z-y^2)S(z+y^2)(z-y^2)^{-1}.$$ The following hold
true.
\begin{enumerate}[\rm(1)]
\item The $k$-subalgebra of $\mathfrak{D}(H)$ generated by $S$
and $T$ is the free $k$-algebra on the set $\{S,T\}$.
\item The elements $S$ and $T$ are symmetric with respect to the
principal involution on $\mathfrak{D}(H)$.
\end{enumerate}
\end{theo}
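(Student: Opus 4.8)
The plan is to pull back the free algebra furnished by Proposition~\ref{prop:freealgebrainWeyl} along the composite $k$-algebra homomorphism $\Theta = \psi\circ\Upsilon\colon \mathfrak{S}^{-1}U(H)\rightarrow K(p;\sigma)$, where $\Upsilon\colon\mathfrak{S}^{-1}U(H)\rightarrow D_1$ and $\psi\colon D_1\rightarrow K(p;\sigma)$ are the maps from the two lemmas preceding the theorem. The first step is to check that $S$ and $T$ genuinely lie in $\mathfrak{S}^{-1}U(H)$. Using $\Upsilon(z)=1$, $\Upsilon(xy+yx)=XY+YX$, $\psi(XY)=t-1$ and $\psi(YX)=t$, one finds $\Theta(V)=\frac{1}{2}\bigl((t-1)+t\bigr)=t-\frac{1}{2}$, hence $\Theta\bigl(V-\frac{1}{3}z^3\bigr)=t-\frac{5}{6}$, $\Theta\bigl(V+\frac{1}{3}z^3\bigr)=t-\frac{1}{6}$, and $\Theta(z\pm y^2)=1\pm p^2$. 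All four of these are nonzero in $K(p;\sigma)$, so $V\mp\frac{1}{3}z^3$ and $z\pm y^2$ lie in $\mathfrak{S}=U(H)\setminus\ker\Upsilon$, and therefore $S,T\in\mathfrak{S}^{-1}U(H)\subseteq\mathfrak{D}(H)$.

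For part (1), I would next observe that $t-\frac{5}{6}$ and $t-\frac{1}{6}$ commute in $K=k(t)$, so that $\Theta(S)=s+s^{-1}$ with $s=(t-\frac{5}{6})(t-\frac{1}{6})^{-1}$; and that $1+p^2$ and $1-p^2$ lie in the commutative subring $k[p]$ of $K[p;\sigma]$, so that $\Theta\bigl((z+y^2)^{-1}(z-y^2)\bigr)=(1-p^2)(1+p^2)^{-1}=u$ and $\Theta\bigl((z+y^2)(z-y^2)^{-1}\bigr)=u^{-1}$, whence $\Theta(T)=u(s+s^{-1})u^{-1}$. These are precisely the two generators appearing in Proposition~\ref{prop:freealgebrainWeyl}, which asserts that the $k$-subalgebra of $K(p;\sigma)$ they generate is free on them. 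Since any nontrivial relation satisfied by $S$ and $T$ would be carried by $\Theta$ to a nontrivial relation satisfied by $\Theta(S)$ and $\Theta(T)$, no such relation exists, and $\{S,T\}$ generates a free $k$-algebra inside $\mathfrak{D}(H)$.

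For part (2), I would first record the action of the principal involution $*$ of $\mathfrak{D}(H)$ on the building blocks of $S$ and $T$. From $x^*=-x$, $y^*=-y$ and $(ab)^*=b^*a^*$ one gets $z^*=(yx-xy)^*=xy-yx=-z$, $(xy+yx)^*=xy+yx$ and $(y^2)^*=y^2$; consequently $V^*=\frac{1}{2}(-z)(xy+yx)(-z)=V$, so $\bigl(V\mp\frac{1}{3}z^3\bigr)^*=V\pm\frac{1}{3}z^3$ and $(z\pm y^2)^*=-(z\mp y^2)$. Applying $*$ to $S=(V-\frac{1}{3}z^3)(V+\frac{1}{3}z^3)^{-1}+(V-\frac{1}{3}z^3)^{-1}(V+\frac{1}{3}z^3)$ interchanges the two summands, giving $S^*=S$; applying $*$ to $T=(z+y^2)^{-1}(z-y^2)\,S\,(z+y^2)(z-y^2)^{-1}$ reproduces the same product up to four sign changes coming from the four factors $(z\pm y^2)^{\pm1}$, which cancel, giving $T^*=T$.

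The argument is almost entirely formal once the maps $\Upsilon$ and $\psi$ are at hand; I expect the only delicate points to be bookkeeping ones: verifying that $\Theta(S)$ and $\Theta(T)$ match the generators $s+s^{-1}$ and $u(s+s^{-1})u^{-1}$ of Proposition~\ref{prop:freealgebrainWeyl} with exactly the constants $\frac{5}{6}$, $\frac{1}{6}$ and $u=(1-p^2)(1+p^2)^{-1}$ prescribed there, which relies on $\Upsilon(z)=1$ collapsing $V$ to the scalar shift $t-\frac{1}{2}$ of $t$, and on carefully tracking the signs in the involution computations for $S$ and $T$. No genuine conceptual obstacle is anticipated.
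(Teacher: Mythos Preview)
Your proposal is correct and follows essentially the same approach as the paper: the same composite homomorphism $\psi\circ\Upsilon$ is used to push $S$ and $T$ to the generators $s+s^{-1}$ and $u(s+s^{-1})u^{-1}$ of Proposition~\ref{prop:freealgebrainWeyl}, and the involution computations for part~(2) are carried out by the same elementary identities $V^*=V$, $(z\pm y^2)^*=-(z\mp y^2)$.
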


\begin{proof}
Consider the morphism of rings $\Phi=\psi\Upsilon\colon
\mathfrak{S}^{-1}U(H)\rightarrow K(p;\sigma)$. Recall that in
$K(p;\sigma)$, we have and $tp=p(t-1)$.

First notice that $\Phi(y)=p$, $\Phi(x)=p^{-1}t$ and $\Phi(z)=1$.
Thus $\Phi(V)=\Phi(\frac{1}{2}z(xy+yx)z)=t-\frac{1}{2}$,
$\Phi(V-\frac{1}{3}z^3)=t-\frac{5}{6}$,
$\Phi(V+\frac{1}{3}z^3)=t-\frac{1}{6}$, $\Phi(z+y^2)=1+p^2$,
$\Phi(z-y^2)=1-p^2$. Hence $V-\frac{1}{3}z^3$, $V+\frac{1}{3}z^3$,
 $z+y^2$, $z-y^2$ are invertible in $\mathfrak{S}^{-1}U(H)$. Thus
$S,T\in \mathfrak{S}^{-1}U(H)$. Moreover, following the notation of Proposition~\ref{prop:freealgebrainWeyl},
$$\Phi(S)=(t-\frac{5}{6})(t-\frac{1}{6})^{-1} +
(t-\frac{1}{6})(t-\frac{5}{6})^{-1}=s+s^{-1},$$
$$\Phi((z+y^2)^{-1}(z-y^2))=(1+p^2)^{-1}(1-p^2)=(1-p^2)(1+p^2)^{-1}=u,$$
$$\Phi((z+y^2)(z-y^2)^{-1})=(1+p^2)(1-p^2)^{-1}=u^{-1}.$$ Hence
$\Phi(T)=u(s+s^{-1})u^{-1}$.

By Proposition~\ref{prop:freealgebrainWeyl}, the set $\{s+s^{-1}, \
u(s+s^{-1})u^{-1}\}$ generates a free $k$-algebra. Therefore, the
$k$-algebra generated by $S$ and $T$ is a free $k$-algebra on the
set $\{S,\ T\}$.



It remains to show that $S$ and $T$ are symmetric with respect to
the principal involution $*$.

Clearly $V^*=V$. Thus $(V-\frac{1}{3}z^3)^*=(V+\frac{1}{3}z^3)$ and
$(V+\frac{1}{3}z^3)^*=V-\frac{1}{3}z^3$. Hence
\begin{eqnarray*}
S^* \ = \
\left((V-\frac{1}{3}z^3)(V+\frac{1}{3}z^3)^{-1} +
(V-\frac{1}{3}z^3)^{-1}(V+\frac{1}{3}z^3)\right)^*\ = \
S
\end{eqnarray*}
Now \begin{eqnarray*} \left((z+y^2)^{-1}(z-y^2)\right)^* & = &
(z-y^2)^*\left((z+y^2)^* \right)^{-1} \\
& = &  (-z-y^2)(-z+y^2)^{-1} \\
& = &  (z+y^2)(z-y^2)^{-1}.
\end{eqnarray*}
Thus
\begin{eqnarray*}
T^* & = & \left((z+y^2)^{-1}(z-y^2)S (z+y^2)(z-y^2)^{-1}\right)^*
\\ & = & \left((z+y^2)(z-y^2)^{-1}\right)^* S^*
\left((z+y^2)^{-1}(z-y^2)\right)^* \\ & = & (z+y^2)^{-1}(z-y^2)S
(z+y^2)(z-y^2)^{-1} \\ & = & T.
\end{eqnarray*}
Therefore $S$ and $T$ are symmetric with respect to the
principal involution.
\end{proof}

Simpler elements $S$ and $T$, symmetric with respect to the principal involution and that generate a free algebra in $\mathfrak{D}(H)$, can be found. The reason why we decided to pick
these more complicated elements is a technical one. It should become clearer at the end
of the proof of Theorem~\ref{theo:freesymmetricOre}.


\section{The case of a residually nilpotent Lie algebra}\label{sec:residuallynilpotent}

In this section, we develop a technique to obtain free algebras in $\mathcal{D}(L)$, where
$L$ is a (generalization of) a residually nilpotent Lie algebra from the ones obtained in $\mathfrak{D}(H)$, where $H$ is the Heisenberg Lie algebra.

\medskip

Let $L$  a Lie $k$-algebra generated by two elements $u,v$. Let
$H=\langle x,y\mid [[y,x],x]=[[y,x],y]=0\rangle$ be the Heisenberg
Lie $k$-algebra. Suppose that there exists a Lie $k$-algebra
homomorphism \begin{equation*}  L\stackrel{\rho}{\rightarrow} H,\
u\mapsto x,\ v\mapsto y.
\end{equation*}
Define $w=[v,u]$ and $z=[y,x]$. Let $N=\ker \rho$. Thus $N$ is a
(Lie) ideal of $L$.

By the universal property of  universal enveloping algebras, $\rho$
can be uniquely extended to a  $k$-algebra homomorphism $\psi\colon
U(L)\rightarrow U(H)$ between the corresponding universal enveloping
algebras. Note that $\ker \psi$ is the ideal of $U(L)$ generated by
$N$. The restriction $\psi_{|U(N)}$ coincides with the augmentation
map $\varepsilon\colon U(N)\rightarrow k$.

By the PBW-Theorem, the elements of $U(H)$ are uniquely expressed as
finite sums \linebreak $\sum_{l,m,n\geq 0} x^ly^mz^na_{lmn}$ with
$a_{lmn}\in k$. Let $\delta_x$ be the inner $k$-derivation of $U(H)$
determined by $x$, i.e. $\delta_x(f)=[f,x]=fx-xf$ for all $f\in
U(H)$. Notice that the $k$-subalgebra of $U(H)$ generated by $z$ can
be regarded as $k[z]$, the polynomial algebra in the variable $z$.
Since $[z,y]=0$, the $k$-subalgebra of $U(H)$ generated by $\{y,z\}$
can be viewed as $k[z][y]$. As $\delta_x(z)=[z,x]=0$,
$\delta_x(y)=[y,x]=z\in k[z][y]$, one can prove that
\begin{equation}\label{eq:U(L)aspolynomials}
U(H)=k[z][y][x;\delta_x]
\end{equation}
because (a), (b), (c), (d) in
Section~\ref{sec:differentialoperators} are easily verified.

Consider now  $U(L)$, the universal enveloping algebra of $L$. By
the PBW-Theorem, the elements of $U(L)$ can be uniquely expressed as
finite sums $\sum_{l,m,n\geq 0} u^lv^mw^nf_{lmn}$ with $f_{lmn}\in
U(N)$. Since $N$ is an ideal of $L$, the inner derivations
$\delta_u$, $\delta_v$, $\delta_w$ of $U(L)$ defined by $u,v,w$,
respectively, are such that $\delta_u(U(N))\subseteq U(N)$,
$\delta_v(U(N))\subseteq U(N)$, $\delta_w(U(N))\subseteq U(N)$. The
$k$-subalgebra of $U(L)$ generated by $U(N)$ and $w$ is
$U(N)[w;\delta_w]$ for properties (a), (b), (c), (d) in
Section~\ref{sec:differentialoperators} are easily verified. Since
$\delta_v(w)\in U(N)\subseteq U(N)[w;\delta_w]$, one can prove, as
before, that the $k$-subalgebra of $U(L)$ generated by $U(N)$ and
$\{w,v\}$ is $U(N)[w;\delta_w][v;\delta_v]$. Furthermore, since
$\delta_u(v)=w$ and $\delta_u(w)\in U(N)$,
\begin{equation}\label{eq:U(H)aspolynomials}
U(L)=U(N)[w;\delta_w][v;\delta_v][u;\delta_u].
\end{equation}

By \eqref{eq:U(L)aspolynomials}, $U(H)=k[z][y][x;\delta_x]$. By applying
Lemma~\ref{lem:extensiontocompletion}(2) twice, we get
\begin{equation}\label{eq:U(L)insideseries}
U(H)\hookrightarrow k((t_z))((t_y))((t_x;\delta_x)).
\end{equation}
More precisely, the first time, we obtain that $\delta_x$ can be
extended to $k((t_z))$. Thus obtaining the embedding
$U(H)\hookrightarrow k((t_z))[y][x;\delta_x]$. Applying
Lemma~\ref{lem:extensiontocompletion} a second time with the role of
$R$ played $k((t_x))[y]$ and the role of $w$ by $y$, we get that
$\delta_x$ can be extended to $k((t_z)((t_y))$. Hence we obtain the
embedding \eqref{eq:U(L)insideseries}.

By \eqref{eq:U(H)aspolynomials},
$U(L)=U(N)[w;\delta_w][v;\delta_v][u;\delta_u]$. Again,
Lemma~\ref{lem:extensiontocompletion}(2) yields an embedding
\begin{equation}\label{eq:U(H)insideseries}
U(L)\hookrightarrow
U(N)((t_w;\delta_w))((t_v;\delta_v))((t_u;\delta_u)).
\end{equation}
More precisely, applying  Lemma~\ref{lem:extensiontocompletion}(2)
twice, we get extensions of $\delta_v$ and $\delta_u$ to
$U(N)((t_w;\delta_w))$, resulting in an embedding
$U(L)\hookrightarrow
U(L)((t_w;\delta_w))[t_v;\delta_v][t_u;\delta_u]$. Again, by
Lemma~\ref{lem:extensiontocompletion}(2), we get an extension of
$\delta_u$ to $U(L)((t_w;\delta_w))((t_v;\delta_v))$ and,  hence, an
embedding of $U(L)$ into
$U(N)((t_w;\delta_w))((t_v;\delta_v))[t_u;\delta_u]$, and from this
we get \eqref{eq:U(H)insideseries}.

In this setting, we  prove the next results.

\begin{lem}\label{lem:commutativediagram}
There exists a commutative diagram of embeddings of $k$-algebras
\begin{equation*}\label{eq:diagramnotOre}
\xymatrixcolsep{0.0001cm}\xymatrix{U(L)=U(N)[w;\delta_w][v;\delta_v][u;\delta_u]\ar@{^{(}->}[rr]\ar@{^{(}->}[dd]
\ar@{^{(}->}[rd] & &
\mathfrak{D}(N)[w;\delta_w][v;\delta_v][u;\delta_u]\ar@{^{(}->}[dd]
\ar@{_{(}->}[ld]\\ &  \mathfrak{D}(L) \ar@{^{(}->}[dr] &\\
U(N)((t_w;\delta_w))((t_v;\delta_v))((t_u;\delta_u))\ar@{^{(}->}[rr]
& & \mathfrak{D}(N)((t_w;\delta_w))((t_v;\delta_v))((t_u;\delta_u))
& }
\end{equation*}
\end{lem}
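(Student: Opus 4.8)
The plan is to assemble the diagram out of four kinds of arrows: the two horizontal arrows induced by the inclusion $U(N)\hookrightarrow\mathfrak{D}(N)$, the vertical arrows coming from the series embeddings \eqref{eq:U(H)insideseries} (and its analogue with $U(N)$ replaced by $\mathfrak{D}(N)$), the left-hand diagonal $U(L)\hookrightarrow\mathfrak{D}(L)$ together with its extension to $\mathfrak{D}(N)[w;\delta_w][v;\delta_v][u;\delta_u]$, and the right-hand diagonal from the latter ring into the bottom series ring. First I would check that $\delta_w,\delta_v,\delta_u$ extend from $U(N)$ to $\mathfrak{D}(N)$: each is the restriction of an inner derivation of $U(L)$, and since $\mathfrak{D}(N)$ is the minimal division ring inside $\mathfrak{D}(L)$ containing $U(N)$, by Proposition~\ref{prop:canonicalfieldoffractions}(1) the ambient inner derivation of $\mathfrak{D}(L)$ restricts to $\mathfrak{D}(N)$; call these extensions again $\delta_w,\delta_v,\delta_u$. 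Then $\mathfrak{D}(N)[w;\delta_w][v;\delta_v][u;\delta_u]$ makes sense, it sits inside $\mathfrak{D}(L)$ (because $w,v,u$ already satisfy the required commutation relations there), and it contains $U(L)$; this gives the left-hand diagonal and the leftmost vertical arrow of the top square, and commutativity of that square is immediate from the definitions.

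Next I would build the bottom row and the right-hand diagonal. Apply Lemma~\ref{lem:extensionmorphismtocompletion}(3) three times, feeding in the inclusion $\phi\colon U(N)\hookrightarrow\mathfrak{D}(N)$ and the compatible pairs of derivations at each stage: this yields a $k$-algebra homomorphism
$$U(N)((t_w;\delta_w))((t_v;\delta_v))((t_u;\delta_u))\longrightarrow \mathfrak{D}(N)((t_w;\delta_w))((t_v;\delta_v))((t_u;\delta_u))$$
acting coefficientwise, which is the bottom arrow; it is injective because it is coefficientwise and $\phi$ is injective. The right-hand diagonal $\mathfrak{D}(N)[w;\delta_w][v;\delta_v][u;\delta_u]\hookrightarrow \mathfrak{D}(N)((t_w;\delta_w))((t_v;\delta_v))((t_u;\delta_u))$ is obtained exactly as \eqref{eq:U(H)insideseries} was obtained from \eqref{eq:U(H)aspolynomials}, by three applications of Lemma~\ref{lem:extensiontocompletion}(2), now over the base ring $\mathfrak{D}(N)$ rather than $U(N)$; the hypotheses (a)--(c) of that lemma hold for the same reasons as before. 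Commutativity of the outer square and of the two lower triangles then follows by checking agreement on the generators $U(N)$ (respectively $\mathfrak{D}(N)$), $t_w,t_v,t_u$ (respectively $w,v,u$), which is routine since all maps in sight are the identity or the natural inclusion on coefficients and send $w\mapsto t_w^{-1}$, etc.

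The one genuine point requiring care—the place I expect to be the main obstacle—is verifying that $\mathfrak{D}(L)$ really does embed into the bottom series ring, i.e.\ that the composite $U(L)\hookrightarrow\mathfrak{D}(N)[\dots][\dots][\dots]\hookrightarrow \mathfrak{D}(N)((t_w;\delta_w))((t_v;\delta_v))((t_u;\delta_u))$ extends to $\mathfrak{D}(L)$. Since the target is a division ring, by the universal property of the minimal division subring it suffices to know the composite $U(L)\to\mathfrak{D}(N)((t_w;\delta_w))((t_v;\delta_v))((t_u;\delta_u))$ is injective; and this is clear because it already factors through the injection into $\mathfrak{D}(L)$ followed by an injection into the bottom ring when $U(L)$ is replaced by $\mathfrak{D}(N)[w;\delta_w][v;\delta_v][u;\delta_u]$—more directly, the composite restricted to $U(L)$ is injective since it agrees with the coefficientwise extension of $U(N)\hookrightarrow\mathfrak{D}(N)$ applied to the series embedding \eqref{eq:U(H)insideseries}, which is injective. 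Once injectivity is in hand, the extension to $\mathfrak{D}(L)$ exists and is unique, and the compatibility of this extension with both the upper and lower triangles follows because both composites $U(L)\to\mathfrak{D}(N)((t_w;\delta_w))((t_v;\delta_v))((t_u;\delta_u))$ agree on $U(L)$ and $\mathfrak{D}(L)$ is generated by $U(L)$. This completes the construction of all arrows and the verification of commutativity.
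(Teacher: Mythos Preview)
Your overall plan matches the paper's, and most of the arrows are assembled correctly. The genuine gap is at exactly the step you flag as the main obstacle: extending the embedding to $\mathfrak{D}(L)$. You write that ``by the universal property of the minimal division subring'' an injective map $U(L)\to E$ into a division ring $E$ extends to $\mathfrak{D}(L)$. But $\mathfrak{D}(L)$ has no such universal property: it is defined as the minimal division subring of one \emph{specific} division ring $D$ (Lichtman's construction) containing $U(L)$, and an embedding of $U(L)$ into some unrelated division ring need not factor through $\mathfrak{D}(L)$. That would amount to $\mathfrak{D}(L)$ being the universal field of fractions of $U(L)$ in Cohn's sense, which is neither assumed nor proved here.

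The paper closes this gap differently. It observes that $\mathfrak{D}(N)[w;\delta_w][v;\delta_v][u;\delta_u]$, being an iterated Ore extension of a division ring, is Noetherian and hence an Ore domain; since $\mathfrak{D}(L)$ is a division ring generated by this Ore domain, $\mathfrak{D}(L)$ \emph{is} its Ore division ring of fractions. Now the universal property of Ore localization does apply, and the embedding of $\mathfrak{D}(N)[w;\delta_w][v;\delta_v][u;\delta_u]$ into the division ring $\mathfrak{D}(N)((t_w;\delta_w))((t_v;\delta_v))((t_u;\delta_u))$ extends uniquely to $\mathfrak{D}(L)$. A secondary point you also glossed over: to know that the subring of $\mathfrak{D}(L)$ generated by $\mathfrak{D}(N)$ and $u,v,w$ really is the iterated skew polynomial ring (rather than a proper quotient of it), you need the $\mathfrak{D}(N)$-linear independence of the standard monomials in $u,v,w$ from Proposition~\ref{prop:canonicalfieldoffractions}(2); the commutation relations alone only give a surjection.
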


\begin{proof}
By Proposition~\ref{prop:canonicalfieldoffractions}, the division
$k$-subalgebra of $\mathfrak{D}(L)$ generated by $U(N)$ is
$\mathfrak{D}(N)$ and the standard monomials on the set $\{u,v,w\}$
are $\mathfrak{D}(N)$-linearly independent.

Recall that if $f$ is an invertible element in a $k$-algebra  and
$\delta$ is a derivation, then
$\delta(f^{-1})=-f^{-1}\delta(f)f^{-1}$. Thus,  since the image of
$U(N)$ under the derivations $\delta_u$, $\delta_v$, $\delta_w$ is
contained in $U(N)$, one can prove that
$\delta_u(\mathfrak{D}(N))\subseteq \mathfrak{D}(N)$,
$\delta_v(\mathfrak{D}(N))\subseteq \mathfrak{D}(N)$ and
$\delta_w(\mathfrak{D}(N))\subseteq \mathfrak{D}(N)$. Hence we
obtain the embeddings
\begin{equation*}
\xymatrixcolsep{0.0001cm}\xymatrix{U(L)=U(N)[w;\delta_w][v;\delta_v][u;\delta_u]\ar@{^{(}->}[rr]\ar@{^{(}->}[dd]
\ar@{^{(}->}[rd] & &
\mathfrak{D}(N)[w;\delta_w][v;\delta_v][u;\delta_u]\ar@{^{(}->}[dd]
\ar@{_{(}->}[ld]\\ &  \mathfrak{D}(L)  &\\
U(N)((t_w;\delta_w))((t_v;\delta_v))((t_u;\delta_u))\ar@{^{(}->}[rr]
& & \mathfrak{D}(N)((t_w;\delta_w))((t_v;\delta_v))((t_u;\delta_u))
& }
\end{equation*}
The remaining embedding $\mathfrak{D}(L)\hookrightarrow
\mathfrak{D}(N)((t_w;\delta_w))((t_v;\delta_v))((t_u;\delta_u))$ is
obtained as follows. The division $k$-algebra $\mathfrak{D}(L)$ is
generated, as a division $k$-algebra, by $U(L)$. Hence
$\mathfrak{D}(L)$ is generated by
$\mathfrak{D}(N)[u;\delta_u][v;\delta_v][w;\delta_w]$. The
$k$-algebra $\mathfrak{D}(N)[w;\delta_w][v;\delta_v][u;\delta_u]$ is
an Ore domain (see for example \cite[Theorem~10.28]{Lam2}).
Therefore $\mathfrak{D}(L)$ is the Ore division ring of fractions of
$\mathfrak{D}(N)[w;\delta_w][v;\delta_v][u;\delta_u]$. The series
ring
$\mathfrak{D}(N)((t_w;\delta_w))((t_v;\delta_v))((t_u;\delta_u))$ is
a division $k$-algebra that contains
$\mathfrak{D}(N)[w;\delta_w][v;\delta_v][u;\delta_u]$; thus it
contains $\mathfrak{D}(L)$, as desired.
\end{proof}

\begin{lem} \label{lem:morphismsofseries}
Let $\varepsilon\colon U(N)\rightarrow k$ denote the augmentation
map. The following hold true.
\begin{enumerate}[\rm (1)]
\item There exists a  $k$-algebra homomorphism 
$$\Phi_w\colon U(N)((t_w;\delta_w))\rightarrow k((t_z)),\quad \sum_{i}t_w^if_i\mapsto \sum_{i}t_z^i\varepsilon(f_i),$$
where $f_i\in U(N)$ for each $i$.
\item There exists a $k$-algebra homomorphism 
$$\Phi_v\colon U(N)((t_w;\delta_w))((t_v;\delta_v))
\rightarrow k((t_z))((t_y)),\quad \sum_{i}t_v^ig_i\mapsto
\sum_{i}t_y^i\Phi_w(g_i),$$ where $g_i\in U(N)((t_w;\delta_w))$ for
each $i$.
\item There exists a $k$-algebra homomorphism
$$\Phi_u\colon U(N)((t_w;\delta_w))((t_v;\delta_v))((t_u;\delta_u))
\rightarrow k((t_z))((t_y))((t_x;\delta_x)),\quad
\sum_{i}t_u^ih_i\mapsto \sum_{i}t_x^i\Phi_v(h_i),$$ where $h_i\in
U(N)((t_w;\delta_w))((t_y;\delta_y))$ for each $i$.
\end{enumerate}
\end{lem}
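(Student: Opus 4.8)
The plan is to obtain each of the three maps by a single application of Lemma~\ref{lem:extensionmorphismtocompletion}(3), bootstrapping from the previous part, so the only real work is to verify the compatibility hypothesis of that lemma each time. Let $\psi\colon U(L)\to U(H)$ be the $k$-algebra homomorphism extending $\rho$, so that $\psi|_{U(N)}=\varepsilon$, $\psi(w)=z$, $\psi(v)=y$, $\psi(u)=x$. For (1) I would take $R=U(N)$ with the derivation $\delta_w$ (which maps $U(N)$ into itself since $N$ is a Lie ideal), $S=k$ with the zero derivation — so that $S((t_z;0))=k((t_z))$ — and $\phi=\varepsilon$; the map produced is precisely $\sum_i t_w^if_i\mapsto\sum_i t_z^i\varepsilon(f_i)$. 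The only hypothesis, $\varepsilon\circ\delta_w=0$, is immediate: for $a\in U(N)$ one has $\delta_w(a)=[a,w]\in U(N)$, whence $\varepsilon(\delta_w(a))=\psi([a,w])=[\varepsilon(a),z]=0$ because $\varepsilon(a)\in k$ is central in $U(H)$.

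For (2) I would apply Lemma~\ref{lem:extensionmorphismtocompletion}(3) with $R=U(N)((t_w;\delta_w))$ carrying the derivation $\delta_v$ (extended to this series ring by Lemma~\ref{lem:extensiontocompletion}(2)), with $S=k((t_z))$ and the zero derivation (so $S((t_y;0))=k((t_z))((t_y))$, legitimate because $[z,y]=0$), and with $\phi=\Phi_w$. The point is to check $\Phi_w\circ\delta_v=0$. Since $\Phi_w$ is a $k$-algebra homomorphism, $\ker\Phi_w$ is a closed two-sided ideal (the series all of whose coefficients lie in $\ker\varepsilon$) and $\delta_v$ is continuous, so it suffices to check that $\delta_v$ sends the generators $U(N)$ and $t_w^{\pm1}$ into $\ker\Phi_w$; this reduces formally to the dense subalgebra $U(N)\langle t_w;\delta_w\rangle$, then to its completion, then to the Ore localization. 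On $U(N)$: $\Phi_w(\delta_v(a))=\psi([a,v])=[\varepsilon(a),y]=0$. For $t_w$: $\delta_v(t_w)=-t_w\delta_v(w)t_w=-t_w[w,v]t_w$, and here the Heisenberg relation $[[y,x],y]=0$ gives $\psi([w,v])=0$, hence $[w,v]\in\ker\rho=N$; so every coefficient of the series $\delta_v(t_w)$ — an iterated $\delta_w$-image of $[w,v]$, hence again in $N$ — is annihilated by $\varepsilon$, giving $\Phi_w(\delta_v(t_w))=0$, and $\Phi_w(\delta_v(t_w^{-1}))=0$ follows.

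For (3) I would again apply Lemma~\ref{lem:extensionmorphismtocompletion}(3), now with $R=U(N)((t_w;\delta_w))((t_v;\delta_v))$ carrying $\delta_u$, with $S=k((t_z))((t_y))$ carrying the derivation $\delta_x$ (the extension to this ring of the inner derivation of $U(H)$ by $x$, for which $\delta_x(t_z)=0$ and $\delta_x(t_y)=-t_y^2t_z^{-1}$), and with $\phi=\Phi_v$. Here the identity to establish is $\Phi_v\circ\delta_u=\delta_x\circ\Phi_v$. The difference $\theta=\Phi_v\delta_u-\delta_x\Phi_v\colon R\to S$ is a $\Phi_v$-derivation (i.e.\ $\theta(ab)=\theta(a)\Phi_v(b)+\Phi_v(a)\theta(b)$), so by the same density/continuity argument it is enough to see that $\theta$ vanishes on $U(N)$ and on $t_w^{\pm1},t_v^{\pm1}$. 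On $U(N)$ and on $t_w^{\pm1}$ both summands of $\theta$ vanish: $\Phi_v\delta_u$ vanishes because $\delta_u(w)=[w,u]\in N$ (this is the other Heisenberg relation $[[y,x],x]=0$), while $\delta_x\Phi_v$ vanishes because $\delta_x(z)=0=\delta_x(t_z)$. The decisive case is $t_v$: since $\delta_u(v)=[v,u]=w$, we get $\delta_u(t_v)=-t_v\,w\,t_v$, and rewriting this in the normal form $\sum_i t_v^ig_i$ produces the term $-t_v^2w$ together with further terms whose coefficients are $\delta_v$-images of $[w,v]\in N$; applying $\Phi_v$ therefore gives $-t_y^2\Phi_w(w)=-t_y^2t_z^{-1}=\delta_x(t_y)=\delta_x(\Phi_v(t_v))$, while $\delta_u(t_v^{-1})=\delta_u(v)=w$ maps to $t_z^{-1}=\delta_x(y)=\delta_x(\Phi_v(t_v^{-1}))$. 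Hence $\theta=0$ and Lemma~\ref{lem:extensionmorphismtocompletion}(3) delivers $\Phi_u$.

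I expect part (3) to be the main obstacle: one must track precisely how $\delta_u$ and $\delta_x$ act on the freshly adjoined series variable through Lemma~\ref{lem:extensiontocompletion}(2), confirm that every ``error coefficient'' arising when $-t_v\,w\,t_v$ is put in normal form lies in the ideal $N$ (so that $\varepsilon$ kills it under $\Phi_v$), and carefully justify reducing the identity $\Phi_v\circ\delta_u=\delta_x\circ\Phi_v$ from all of $R$ to a generating set via the dense subalgebra and the completion. The conceptual content, and the reason the Heisenberg algebra is exactly the right target, is the single matching $\delta_u(v)=w\leftrightarrow\delta_x(y)=z$ under $\varepsilon$.
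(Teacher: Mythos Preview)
Your proposal is correct and follows the same overall architecture as the paper: each of $\Phi_w,\Phi_v,\Phi_u$ is produced by a single invocation of Lemma~\ref{lem:extensionmorphismtocompletion}(3), and the content of the proof is the verification of the compatibility condition~\eqref{eq:conditionforextension} at each stage.

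Where you diverge from the paper is in how you carry out that verification. The paper checks $\Phi_w\circ\delta_v=0$ and $\Phi_v\circ\delta_u=\delta_x\circ\Phi_v$ by direct computation on an arbitrary series: it expands $\delta_v\bigl(\sum_i t_w^if_i\bigr)$ and $\delta_u\bigl(\sum_i t_v^ig_i\bigr)$ term by term, and in part~(3) proves by induction on $i$ the formulas $\delta_u(t_v^i)=-i\,t_v^{i+1}w+A_i$ with $A_i\in\ker\Phi_v$ and $\delta_x(t_y^i)=-i\,t_y^{i+1}z$, then compares. You instead observe that (in part~(2)) $\{r:\delta_v(r)\in\ker\Phi_w\}$ is a closed subalgebra, and (in part~(3)) $\theta=\Phi_v\delta_u-\delta_x\Phi_v$ is a $\Phi_v$-derivation whose zero set is a closed subalgebra, so in both cases it suffices to check the condition on the generating set $U(N)\cup\{t_w^{\pm1}\}$ (resp.\ $U(N)\cup\{t_w^{\pm1},t_v^{\pm1}\}$). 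This buys you a cleaner argument that avoids the inductions on powers; the trade-off is that you must justify continuity of the extended derivations and closedness of the relevant kernels, which you acknowledge but do not spell out. Both routes are sound, and the decisive computations --- that $[w,v],[w,u]\in N$ by the two Heisenberg relations, and that $\delta_u(t_v)\equiv -t_v^2w$ modulo $\ker\Phi_v$ matches $\delta_x(t_y)=-t_y^2z$ --- are the same in each.
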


\begin{proof}
Recall that $\ker\varepsilon$ is the ideal of $U(N)$ generated by
$N$. Also, any element $f\in U(N)$ is of the form $f=a+b$ with $a\in
k$ and $b\in \ker\varepsilon$. Since $N$ is an ideal of $H$,
$\delta_s(n)=[n,s]\in N$ for any $n\in N$, $s\in L$. Thus
\begin{equation}\label{eq:imageinsidekernel}
\delta_s(U(N))\subseteq \ker\varepsilon,\  \textrm{ for any } s\in
L.
\end{equation}

(1) The polynomial ring $k[z]$ can be regarded as $k[z;\delta_z]$
where $\delta_z=0$. By
Lemma~\ref{lem:extensionmorphismtocompletion}, it is enough to prove
that $\varepsilon(\delta_w(f))=\delta_z(\varepsilon(f))=0$ for all
$f\in U(N)$. This holds by \eqref{eq:imageinsidekernel}.

(2) The polynomial ring $k((t_z))[y]$ can be viewed as
$k((t_z))[y;\delta_y]$ where $\delta_y=0$. By
Lemma~\ref{lem:extensionmorphismtocompletion}, it is enough to prove
that
\begin{equation}\label{eq:imageinthekernel}
\Phi_w(\delta_v(g))=\delta_y(\Phi_w(g))=0 \textrm{ for all }
g\in U(N)((t_w;\delta_w)).
\end{equation}
Observe that it is enough to prove this
equality for $g\in U(N)[[t_w;\delta_w]]$. Let $g=\sum_{i\geq 0}
t_w^if_i$ with $f_i\in U(N)$ for all $i$. By
Lemma~\ref{lem:extensiontocompletion}(2),
$$\delta_v\Big(\sum_{i\geq 0}
t_w^if_i\Big)= \sum_{i\geq 0} \delta_v(t_w^if_i)=\sum_{i\geq 0}
\Big(t_w^i\delta_v(f_i)+\delta_v(t_w^i)f_i \Big).$$ By
\eqref{eq:imageinsidekernel}, $\delta_v(f_i)\in \ker\varepsilon$.
Therefore
$$\Phi_w\Big(\sum_{i\geq 0}
t_w^i\delta_v(f_i)\Big)=\sum_{i\geq 0}
t_z^i\varepsilon(\delta_v(f_i))=0.$$ By
Lemma~\ref{lem:extensiontocompletion}(2),
$$\delta_v(t_w)=-t_w\delta_v(w)t_w=\sum_{i=1}^\infty t_w^{i+1}\delta_w^{i-1}(\delta_v(w))(-1)^{i}.$$
Therefore $\delta_v(t_w)$ is a series with coefficients in
$\ker\varepsilon$. Suppose that $\delta_v(t_w^{i-1})$ is a series
with coefficients in $\ker\varepsilon$. Then
$$\delta_v(t_w^{i})=\delta_v(t_w^{i-1}t_w)=t_w^{i-1}\delta_v(t_w)+\delta_v(t_w^{i-1})t_w.$$
From this, it is not difficult to show that  $\delta_v(t_w^i)$ is a series with coefficients in
$\ker\varepsilon$. Hence $\sum_{i\geq 0} \delta_v(t_w^i)f_i$ is a
series with coefficients in $\ker\varepsilon$, and  $\Phi_w\Big(
\sum_{i\geq 0} \delta_v(t_w^i)f_i \Big)=0$. Therefore
$\Phi_w(\delta_w(g))=0$, as desired.

(3) By Lemma~\ref{lem:extensionmorphismtocompletion}, it is enough
to prove that
$$\Phi_v(\delta_u(h))=\delta_x(\Phi_v(h)) \textrm{ for all }
h\in U(N)((t_w;\delta_w))((t_v;\delta_v)).$$ Observe that it is
enough to prove this equality for $h\in
U(N)((t_w;\delta_w))[[t_v;\delta_v]]$. Let $h=\sum_{i\geq 0}
t_v^ig_i$ where $g_i\in U(N)((t_w;\delta_w))$ for all $i$. By
Lemma~\ref{lem:extensiontocompletion}(2),
$$\delta_u\Big(\sum_{i\geq 0} t_v^ig_i \Big)=\sum_{i\geq0}
\Big(t_v^i\delta_u(g_i)+\delta_u(t_v^i)g_i \Big).$$ In
\eqref{eq:imageinthekernel}, we proved that $\delta_v(g_i)\in \ker
\Phi_w$. In the same way, one can show that $\delta_u(g_i)\in \ker
\Phi_w$. Thus $\Phi_v\Big(\sum_{i\geq 0}
t_v^i\delta_u(g_i)\Big)=\sum_{i\geq 0}
t_y^i\Phi_w(\delta_u(g_i))=0$. Hence we only have to worry about the
term $\sum_{i\geq 0}\delta_u(t_v^i)g_i$. We prove, by induction on
$i$, that $\delta_u(t_v^i)=t_v^{i+1}w(-i) + A_i$, with $A_i\in \ker
\Phi_v$. For $i=0$, the result is clear. For $i=1$,
$\delta_u(t_v)=-t_v\delta_u(v)t_v=t_vwt_v(-1)$. By
Lemma~\ref{lem:extensiontocompletion}(2),
$$wt_v=\sum_{i\geq 1}t_v^i\delta_v^{i-1}(w)(-1)^{i-1}=t_vw +A'_1,$$ where $A'_1=\sum_{i\geq2}t_v^i\delta_v^{i-1}(w)(-1)^{i-1}$,
 a series in $t_v$ where the  coefficient of  $t^i_v$ is
$\delta_v^{i-1}(w)(-1)^{i-1}\in \ker\varepsilon$ for $i>1$. Thus
$\delta_u(t_v)=t_v^2w(-1) + t_vA'_1(-1)$. If we set
$A_1=t_vA_1'(-1)$, the result is proved for $i=1$. Suppose the
result holds for $l<i$. Then
\begin{eqnarray*}
\delta_u(t_v^i) & = & \delta_u(t_v^{i-1}t_v) \\
& = & t_v^{i-1}\delta_u(t_v)+\delta_u(t_v^{i-1})t_v\\
& = & t_v^{i-1}(t_v^2w(-1)+A_1)+ (t_v^iw(-(i-1))+A_{i-1})t_v\\
& = &
t_v^{i+1}w(-1)+t_v^{i+1}w(-(i-1))+t_v^{i-1}A_1+A_{i-1}t_v+t_v^iA_1'(-(i-1))\\
& = & t_v^{i+1}w(-i) + t_v^{i-1}A_1+A_{i-1}t_v+t_v^iA_1'(-(i-1)).
\end{eqnarray*}
Now it is not difficult to prove that $A_i=
t_v^{i-1}A_1+A_{i-1}t_v+t_v^iA_1'(-(i-1))$ is a series in $t_v$ with
coefficients in $\ker\varepsilon$. Thus $A_i\in\ker \Phi_v$.

Hence, we have, on the one hand, \begin{eqnarray*}
 \Phi_v\Big(\delta_u\Big(\sum_{i\geq 0} t_v^ig_i\Big)\Big) & = &  \Phi_v\Big(\sum_{i\geq 0}
\Big(t_v^i\delta_u(g_i)+\delta_u(t_v^i)g_i \Big)   \Big)  \\
& = & \Phi_v\Big(\sum_{i\geq 0} t_v^i\delta_u(g_i)\Big) +
\Phi_v \Big(\sum_{i\geq 0} \delta_u(t_v^i)g_i \Big) \\
& = & \sum_{i\geq 0} t_y^i\Phi_w(\delta_u(g_i)) +
\sum_{i\geq 0} \Phi_v(t_v^{i+1}w(-i) + A_i)\Phi_w(g_i) \\
& = & \sum_{i\geq 0} t_y^{i+1}\Phi_w(w(-i))\Phi_w(g_i) \\
& = & \sum_{i\geq 0} t_y^{i+1}z\Phi_w(g_i)(-i).
\end{eqnarray*}
On the other hand,
\begin{eqnarray*}
\delta_x\Big(\Phi_v\Big(\sum_{i\geq 0} t_v^ig_i \Big)\Big) & =
& \delta_x\Big( \sum_{i\geq 0} t_y^i\Phi_w(g_i)  \Big) \\
& = & \sum_{i\geq 0} \Big( t_y^i\delta_x(\Phi_w(g_i)) +
\delta_x(t_y^i)\Phi_w(g_i) \Big). \\
\end{eqnarray*}
First observe that $\delta_x(\Phi_w(g_i))=0$, because $\Phi_w(g_i)$
is a series in $t_z$ with coefficients in $k$, and $z$ commutes with
$x$. If we prove, by induction on $i$, that
$\delta_x(t_y^i)=t_y^{i+1}z(-i)$, the result is proved. For $i=0$,
this is clear. For $i=1$,
$$\delta_x(t_y)=-t_y\delta_x(y)t_y=-t_yzt_y=t_y^2z(-1),$$
where we have used that $z$ and $y$ commute (and hence $z$ and
$t_y$). Suppose now that the identity holds for $l<i$. Then
\begin{eqnarray*}
\delta_x(t_y^i) & = & \delta_x(t_y^{i-1}t_y) \\
& = & t_y^{i-1}\delta_x(t_y) + \delta_x(t_y^{i-1})t_y \\
& = & t_y^{i-1}t_y^2(-z) + (t_y^iz(-(i-1)))t_y \\
& = & t_y^{i+1}z(-1) + t_y^{i+1}z(-(i-1))=t_y^{i+1}z(-i),
\end{eqnarray*}
as desired.
\end{proof}

Now we are ready to prove the main result of this section.

\begin{theo}\label{theo:freesymmetricresiduallynilpotent}
Let $k$ be a field of  characteristic zero, let $H=\langle x,y\mid
[[y,x],x]=[[y,x],y]=0\rangle$ be the Heisenberg Lie $k$-algebra and
let $L$  be a Lie $k$-algebra generated by two elements $u,v$.  Suppose
that there exists a Lie $k$-algebra homomorphism \begin{equation*}
L{\rightarrow} H,\ u\mapsto x,\ v\mapsto y.
\end{equation*}
Let $w=[v,u]$, $V=\frac{1}{2}w(uv+vu)w$, and consider the following
elements of $\mathfrak{D}(L)$:
$$S=(V-\frac{1}{3}w^3)(V+\frac{1}{3}w^3)^{-1} +
(V-\frac{1}{3}w^3)^{-1}(V+\frac{1}{3}w^3),$$
$$T=(w+v^2)^{-1}(w-v^2)S(w+v^2)(w-v^2)^{-1}.$$ The following hold
true.
\begin{enumerate}[\rm(1)]
\item The $k$-subalgebra of $\mathfrak{D}(L)$
generated by $S$ and $T$ is the free $k$-algebra on the set
$\{S,T\}$.
\item The elements $S$ and $T$ are symmetric with respect to the
principal involution on $\mathfrak{D}(L)$.
\end{enumerate}
\end{theo}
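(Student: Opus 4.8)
The plan is to transfer the result already proved for the Heisenberg Lie algebra, Theorem~\ref{theo:freesymmetricHeisenberg}, along the machinery assembled in Lemmas~\ref{lem:commutativediagram} and~\ref{lem:morphismsofseries}. First I would observe that the elements $S$ and $T$ of $\mathfrak{D}(L)$ are literally the images under the obvious correspondence $u\leftrightarrow x$, $v\leftrightarrow y$, $w\leftrightarrow z$ of the elements with the same name in Theorem~\ref{theo:freesymmetricHeisenberg}, with $V=\frac12 w(uv+vu)w$ playing the role of $\frac12 z(xy+yx)z$. To see that $S$ and $T$ actually make sense in $\mathfrak{D}(L)$, one must check that $V-\frac13w^3$, $V+\frac13w^3$, $w+v^2$ and $w-v^2$ are invertible there; this follows because their images under the composite embedding $\mathfrak{D}(L)\hookrightarrow \mathfrak{D}(N)((t_w;\delta_w))((t_v;\delta_v))((t_u;\delta_u))$ from Lemma~\ref{lem:commutativediagram}, composed with the homomorphism $\Phi_u\Phi_v\Phi_w$ from Lemma~\ref{lem:morphismsofseries}, land in $K(p;\sigma)$ on the nonzero elements $t-\frac56$, $t-\frac16$, $1+p^2$, $1-p^2$ (using $\varepsilon$ to kill $U(N)$ and identifying $w\mapsto z\mapsto 1$), and a nonzero element of a division ring whose image under a ring homomorphism into a domain is nonzero need only be invertible; more carefully, since $\mathfrak{D}(N)((\cdots))$ is a division ring, these four elements are invertible there as soon as they are nonzero, which they are since they already are nonzero in $U(L)$.

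For part~(1), the key point is that the composite $k$-algebra homomorphism
$$\Theta\colon \mathfrak{D}(L)\hookrightarrow \mathfrak{D}(N)((t_w;\delta_w))((t_v;\delta_v))((t_u;\delta_u)) \xrightarrow{\ \Phi_u\Phi_v\Phi_w\ } k((t_z))((t_y))((t_x;\delta_x))$$
sends $u\mapsto t_x^{-1}$ (i.e. $x$), $v\mapsto t_y^{-1}$ (i.e. $y$), $w\mapsto t_z^{-1}$, no wait — one must be careful about which generators go to series variables and which to their inverses. The cleaner route is: restrict $\Phi_u\Phi_v\Phi_w$ to the image of $U(L)$ and observe it factors through $U(H)=k[z][y][x;\delta_x]\hookrightarrow k((t_z))((t_y))((t_x;\delta_x))$, sending $u\mapsto x$, $v\mapsto y$, $w\mapsto z$; this is exactly the map $\psi$ of Lemma~\ref{lem:morphismsofseries} read off the universal properties, and it is the one whose values on $V$, $V\pm\frac13w^3$, $w\pm v^2$ reproduce $t-\frac12$, $t-\frac56$, $t-\frac16$, $1+p^2$, $1-p^2$ after composing with the isomorphism $U(H)\cong$ image inside $D_1\xrightarrow{\psi}K(p;\sigma)$ of Theorem~\ref{theo:freesymmetricHeisenberg}. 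Since these elements are invertible in $\mathfrak{D}(L)$, the homomorphism $\Theta$ is defined on $S$ and $T$, and $\Theta(S)=s+s^{-1}$, $\Theta(T)=u(s+s^{-1})u^{-1}$ in the notation of Proposition~\ref{prop:freealgebrainWeyl}, which generate a free $k$-algebra there. A homomorphism that sends a set to free generators forces the original set to generate a free algebra, so the $k$-subalgebra of $\mathfrak{D}(L)$ generated by $S,T$ is free on $\{S,T\}$.

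For part~(2), this is a purely formal computation with the principal involution $*$ on $\mathfrak{D}(L)$, identical to the one carried out at the end of the proof of Theorem~\ref{theo:freesymmetricHeisenberg} but with $x,y,z$ replaced by $u,v,w$: one has $V^*=V$ (since $(uv)^*=v^*u^*=(-v)(-u)=vu$, $(vu)^*=uv$, $w^*=-w$ so $w^3{}^*=-w^3$, and thus $(\frac12 w(uv+vu)w)^*=\frac12 w^*(uv+vu)^* w^*=\frac12(-w)(uv+vu)(-w)=V$), whence $(V-\frac13w^3)^*=V+\frac13w^3$; this makes $S^*=S$ directly. Likewise $\big((w+v^2)^{-1}(w-v^2)\big)^*=(w-v^2)^*\big((w+v^2)^*\big)^{-1}=(-w-v^2)(-w+v^2)^{-1}=(w+v^2)(w-v^2)^{-1}$, so $T^*=\big((w+v^2)(w-v^2)^{-1}\big)^* S^* \big((w+v^2)^{-1}(w-v^2)\big)^*=(w+v^2)^{-1}(w-v^2)\,S\,(w+v^2)(w-v^2)^{-1}=T$. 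I do not expect a genuine obstacle here: everything of substance was established in Section~\ref{sec:HeisenbergLiealgebra} and in the lemmas of this section, and the only thing requiring a modicum of care is bookkeeping around which of $U(H)$ and $U(L)$ embeds into which series ring and verifying that the four distinguished elements are invertible in $\mathfrak{D}(L)$ so that $S$ and $T$ are legitimately defined — the latter being immediate from Lemma~\ref{lem:commutativediagram} since those elements are nonzero in $U(L)$ and $\mathfrak{D}(L)$ is a division ring.
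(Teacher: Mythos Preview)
There is a genuine gap in your argument for part~(1). The homomorphism $\Phi_u$ of Lemma~\ref{lem:morphismsofseries} is defined on $U(N)((t_w;\delta_w))((t_v;\delta_v))((t_u;\delta_u))$, \emph{not} on $\mathfrak{D}(N)((t_w;\delta_w))((t_v;\delta_v))((t_u;\delta_u))$. It is built from the augmentation map $\varepsilon\colon U(N)\to k$, which has a large kernel (the ideal generated by $N$) and therefore cannot extend to the division ring $\mathfrak{D}(N)$. Consequently your composite $\Theta\colon \mathfrak{D}(L)\hookrightarrow \mathfrak{D}(N)((\cdots))\xrightarrow{\Phi_u} k((\cdots))$ does not exist: a nonzero ring homomorphism out of the division ring $\mathfrak{D}(L)$ would have to be injective, yet the map you want kills every element of $N$.

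What the paper actually does --- and what you must do --- is verify that $V\pm\frac{1}{3}w^{3}$ and $w\pm v^{2}$ are invertible already in $U(N)((t_w;\delta_w))((t_v;\delta_v))((t_u;\delta_u))$, which is \emph{not} a division ring since $U(N)$ is not one. This requires writing each of these elements as an iterated Laurent series and checking that the leading coefficient at each stage is a unit in the relevant coefficient ring; the paper carries out this computation explicitly (for instance, the leading coefficient of $V+\frac{1}{3}w^{3}$ as a series in $t_u$ is $t_v^{-1}t_w^{-2}+t_w^{-1}[w,v]+[[w,v],w]$, whose leading coefficient as a series in $t_v$ is $t_w^{-2}$, a unit). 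Once this is established, $S$ and $T$ lie in $U(N)((\cdots))$, $\Phi_u$ applies to them, and $\Phi_u(S)=S_H$, $\Phi_u(T)=T_H$ in $\mathfrak{D}(H)\subset k((t_z))((t_y))((t_x;\delta_x))$; then Theorem~\ref{theo:freesymmetricHeisenberg} finishes the job. Your observation that these four elements are nonzero in $U(L)$, hence invertible in $\mathfrak{D}(L)$, is correct but only tells you that $S$ and $T$ are well defined in $\mathfrak{D}(L)$; it does not let you evaluate $\Phi_u$ on them. Your argument for part~(2) is fine and matches the paper's.
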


\begin{proof}
Define $z=[y,x]\in H$. Consider the embedding
$U(H)\hookrightarrow k((t_z))((t_y))((t_x;\delta_x))$
given in \eqref{eq:U(L)insideseries}. Since $k((t_z))((t_y))((t_x;\delta_x))$ is a division $k$-algebra and $U(H)$
is an Ore domain, it extends to an embedding
$\mathfrak{D}(H)\hookrightarrow k((t_z))((t_y))((t_x;\delta_x))$.

Let $N=\ker (L\rightarrow H)$ and consider  the embedding
$U(L)\hookrightarrow
U(N)((t_w;\delta_w))((t_v;\delta_v))((t_u;\delta_u))$ given in
\eqref{eq:U(H)insideseries}. Let $\Phi_u\colon
U(N)((t_w;\delta_w))((t_v;\delta_v))((t_u;\delta_u)) \rightarrow
k((t_z))((t_y))((t_x;\delta_x))$  be the homomorphism given in
Lemma~\ref{lem:morphismsofseries}.

Define the following elements in $\mathfrak{D}(H)$:
$V_H=\frac{1}{2}z(xy+yx)z$,
$$S_H=(V_H-\frac{1}{3}z^3)(V_H+\frac{1}{3}z^3)^{-1} +
(V_H-\frac{1}{3}z^3)^{-1}(V_H+\frac{1}{3}z^3),$$
$$T_H=(z+y^2)^{-1}(z-y^2)S_H(z+y^2)(z-y^2)^{-1}.$$
By
Theorem~\ref{theo:freesymmetricHeisenberg},  the $k$-algebra generated by $S_H$ and $T_H$ is the free $k$-algebra on the set $\{S_H,T_H\}$.

We claim that the elements $V-\frac{1}{3}w^3$, $V+\frac{1}{3}w^3$,
$w+v^2$ and $w-v^2$ are all invertible in
$U(N)((t_w;\delta_w))((t_v;\delta_v))((t_u;\delta_u))$. If the claim
is true, we will have $\Phi_u(V)=V_H$, $\Phi_u(S)=S_H$ and
$\Phi_u(T)=T_H$. Moreover, by Lemma~\ref{lem:commutativediagram},
$V$, $S$ and $T$ belong to $\mathfrak{D}(L)$. Therefore, the
elements $S$ and $T$ are nonzero and invertible in
$\mathfrak{D}(L)$, and the $k$-algebra generated by them is the free
$k$-algebra on the set $\{S,T\}$.

We proceed to prove the claim. We begin with the element
$w+v^2=t_w^{-1}+t_v^{-2}$. As a series in $t_v$, this element is
invertible in $U(N)((t_w;\delta_w))((t_v;\delta_v))$ if and only if
the coefficient of $t_v^{-2}$ is invertible in the ring of
coefficients  $U(N)((t_w;\delta_w))$. The coefficient is $1$, which
is clearly invertible. Similarly, it can be proved that $w-v^2$ is
invertible. Now we show that $V+\frac{1}{3}w^3$ is invertible in
$U(N)((t_w;\delta_w))((t_v;\delta_v))((t_u;\delta_u))$. First we
obtain an expression of $V+\frac{1}{3}w^3$ as a series in $t_u$.
\begin{eqnarray*}
V+\frac{1}{3}w^3 & = & \frac{1}{2} w(uv+vu)w +\frac{1}{3}w^3 \\ & =
& \frac{1}{2}w(uv+[v,u] +uv)w + \frac{1}{3}w^3 \\ & = &
\frac{1}{2}w^3+  wuvw + \frac{1}{3}w^3 \\ & = & \frac{5}{6}w^3 +wuvw
\\ & = & \frac{5}{6}t_w^{-3} +wuvw.
\end{eqnarray*}
Now,
\begin{eqnarray*}
wvuw & = & (uw+[w,u])vw \\ & = & u(vw+[w,v])w + (v[w,u]+[[w,u],v])w
\\  & = & uvw^2+ u(w[w,v]+[[w,v],w])+v(w[w,u]+[[w,u],w])+w[[w,u],v]+[[[w,u],v],w]  \\
& = & t_u^{-1}t_v^{-1}t_w^{-2}+ t_u^{-1}t_w^{-1}[w,v] +
t_u^{-1}[[w,v],w]
 + \\ & &  t_v^{-1}t_w^{-1}[w,u] + t_v^{-1}[[w,u],w]+
t_w^{-1}[[w,u],v]+ [[[w,u],v],w]
  \\
& = &  t_u^{-1}(t_v^{-1}t_w^{-2}+ t_w^{-1}[w,v] + [[w,v],w])+
\\ & &  t_v^{-1}t_w^{-1}[w,u] + t_v^{-1}[[w,u],w]+
t_w^{-1}[[w,u],v]+ [[[w,u],v],w].
\end{eqnarray*}
Thus, as a series in $t_u$, the coefficient of the least element in
the support  of $V+\frac{1}{3}w^3$ is $t_v^{-1}t_w^{-2}+
t_w^{-1}[w,v] + [[w,v],w]\in U(N)((t_w;\delta_w))((t_v;\delta_v))$.
Hence $V+\frac{1}{3}w^3$ is invertible in
$U(N)((t_w;\delta_w))((t_v;\delta_v))((t_u;\delta_u))$ if and only
if $t_v^{-1}t_w^{-2}+ t_w^{-1}[w,v] + [[w,v],w]$ is invertible in
$U(N)((t_w;\delta_w))((t_v;\delta_v))$. But, as a series in $t_v$
with coefficients in $U(N)((t_w;\delta_w))$, the coefficient of the
least element in the support of $t_v^{-1}t_w^{-2}+ t_w^{-1}[w,v] +
[[w,v],w]$ is $t_w^{-2}$. Clearly, $t_w^{-2}$ is invertible in
$U(N)((t_w;\delta_w))$. This implies that $t_v^{-1}t_w^{-2}+
t_w^{-1}[w,v] + [[w,v],w]$ is invertible in
$U(N)((t_w;\delta_w))((t_v;\delta_v))$. Therefore $V+\frac{1}{3}w^3$
is invertible in
$U(N)((t_w;\delta_w))((t_v;\delta_v))((t_u;\delta_u))$. The case of
$V-\frac{1}{3}w^3$ is shown analogously, and the claim is proved.

We have just proved (1). To prove that $S$ and $T$ are symmetric
with respect to the principal involution, one proceeds as in the
proof of Theorem~\ref{theo:freesymmetricHeisenberg} substituting
$u,v,w$ for $x,y,z$, respectively.
\end{proof}

\medskip

Let $K$ be a Lie $k$-algebra. By a \emph{central series} of $K$ we
mean a set $\mathcal{K}$ of ideals of $K$, called the \emph{terms}
of $K$, which is linearly ordered by inclusion,  contains $0$ and
$K$, and $\mathcal{K}$ also satisfies the following conditions:
\begin{enumerate}[(i)]
\item If $0\neq x\in K$, there are terms of $\mathcal{K}$ which do
not contain $x$ and the union of all such terms is a term $V_x$ of
$\mathcal{K}$.
\item If $0\neq x\in K$, there are terms of $\mathcal{K}$ which
contain $x$ and the intersection of all such terms is a term
$\Lambda_x$ of $\mathcal{K}$.
\item Each factor $\Lambda_x/V_x$ is central in $K$, that is, $[K,\Lambda_x]\subseteq
V_x$.
\end{enumerate}

\begin{ex}\label{ex:centralseries}
Let $K$ be a Lie $k$-algebra.
\begin{enumerate}[(a)]
\item Suppose that $K$ is nilpotent, that is, there exists a finite
sequence of ideals of $K$
$$K=K_1\supseteq K_2\supseteq \dotsb\supseteq K_n=0,$$
such that $[K,K_i]\subseteq K_{i+1}$. Then
$\mathcal{K}=\{K_i\}_{i=1}^n$ is a central series of $K$.
\item More generally, suppose that $K$ is a \emph{residually nilpotent} Lie
$k$-algebra, that is, there exists a sequence $\{K_i\}_{i=1}^\infty$
of ideals of $K$ such that $K_1=K$, $K_i\supseteq K_{i+1}$,
$\bigcap_{i=1}^\infty K_i=0$ and $[K,K_i]\subseteq K_{i+1}$. Then
$\mathcal{K}=\{0\}\cup\{K_i\}_{i=1}^\infty$ is a central series of $K$.

\item Even more generally, suppose that $K$ is a \emph{hypo-central} Lie algebra, that is,
$K$ is a Lie $k$-algebra with a sequence of ideals
$\{K_\lambda\}_{\lambda<\varepsilon}$, where $\varepsilon$ is an
ordinal, such that $K_1=K,$ $K_\lambda\supseteq K_{\lambda+1}$,
$[K,K_\lambda]\subseteq K_{\lambda+1}$,
$K_\lambda=\bigcap_{\beta<\lambda}K_\beta$ when $\lambda$ is a limit
ordinal, and $\bigcap_{\lambda<\varepsilon} K_\lambda=0$. Then
$\mathcal{K}=\{0\}\cup\{K_\lambda\}_{\lambda<\varepsilon}$ is a central
series.

\item Let $M$ be a semigroup with an order relation such that, for
all $x,y,z\in M$, we have
\begin{enumerate}[(i)]
\item  $x<y$ implies  $zx<zy$ and $xz<yz$, and
\item $x<xy$, $x<yx$.
\end{enumerate}
Suppose that a Lie $k$-algebra $K$ has a family of $k$-vector
subspaces $\{K^x\}_{x\in M}$ such that $K=\bigoplus_{x\in M} K^x$
and $[K^x,K^y]\subseteq K^{xy}$. Define ideals of $K$ by
$V_x=\bigcup_{y\geq x} K^y$ and $\Lambda_x=\bigcup_{y>x} K^y$ (it
may happen that $V_x=\Lambda_y$ for some $x,y$). Then
$\mathcal{K}=\{V_x,\Lambda_x\mid x\in M\}\cup {K}\cup\{0\}$
(possibly not disjoint) is a central series.

\item A Lie $k$-algebra $K$ is \emph{hypercentral} if there exists a chain
of ideals $\{K_\mu\}_{\mu\leq\nu}$ of $K$ (indexed by some ordinal
$\nu$) that satisfies the following conditions: $K_0=0$, $K_\nu=K$,
 $K_\mu\subseteq K_{\mu+1}$ for all $0\leq \mu<\nu$,
 $K_{\mu'}=\bigcup_{\mu<\mu'}K_\mu$ for all limit ordinals
$\mu'\leq \nu$, $[K,K_{\mu+1}]\subseteq K_\mu$ for all $\mu<\nu$, or
equivalently, $K_{\mu+1}/K_\mu$ is contained in the center of
$K/K_{\mu}$. Then $\mathcal{K}=\{K_\mu\}_{\mu\leq\nu}$ is a central
series.

\item There exists a concept of orderable Lie algebra, defined in
\cite{KopytovOrderedLieAlgebras}. It is proved in
\cite[Corollary~3.5]{KopytovOrderedLieAlgebras} that every orderable
Lie $k$-algebra admits a central series.  \qed

\end{enumerate}

\end{ex}

\begin{coro}\label{coro:freesymmetricresiduallynilpotent}
Let $k$ be a field of  characteristic zero. Let $K$ be a Lie
$k$-algebra with a central series $\mathcal{K}$.

Let $u,v\in K$ be such that $[v,u]\neq 0$. Denote by $L$ the Lie
$k$-subalgebra of $K$ generated by $\{u,v\}$. Then there exists an
ideal $N$ of $L$ such that $L/N$ is isomorphic to $H$, the
Heisenberg Lie $k$-algebra. Therefore, if we define $w=[v,u]$,
$V=\frac{1}{2}w(uv+vu)w$,
$$S=(V-\frac{1}{3}w^3)(V+\frac{1}{3}w^3)^{-1} +
(V-\frac{1}{3}w^3)^{-1}(V+\frac{1}{3}w^3),$$
$$T=(w+v^2)^{-1}(w-v^2)S(w+v^2)(w-v^2)^{-1},$$ the following hold
true.
\begin{enumerate}[\rm(1)]
\item The $k$-subalgebra of $\mathfrak{D}(K)$ generated by $S$ and $T$ is the free $k$-algebra on the set $\{S,T\}$.
\item The elements $S$ and $T$ are symmetric with respect to the
principal involution on $\mathfrak{D}(K)$.
\end{enumerate}
\end{coro}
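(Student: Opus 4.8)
The plan is to reduce Corollary~\ref{coro:freesymmetricresiduallynilpotent} to Theorem~\ref{theo:freesymmetricresiduallynilpotent} by exhibiting the required ideal $N$ of $L$ and the surjection $L/N\cong H$. The first task is to produce $N$ from the central series $\mathcal{K}$ of $K$. Since $[v,u]\neq 0$, consider the term $\Lambda_{[v,u]}$ and the term $V_{[v,u]}$ associated with the element $[v,u]$ as in the definition of a central series; by property (iii), $[K,\Lambda_{[v,u]}]\subseteq V_{[v,u]}$, so in particular $[[v,u],u]$ and $[[v,u],v]$ both lie in $V_{[v,u]}$. Set $N=L\cap V_{[v,u]}$. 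Then $N$ is an ideal of $L$ (being the intersection of the ideal $L$ with the ideal $V_{[v,u]}$ of $K$, restricted to $L$), and $[v,u]\notin N$ because $[v,u]\notin V_{[v,u]}$ by construction. The quotient $L/N$ is generated by the images $\bar u,\bar v$ of $u,v$, and in $L/N$ we have $[[\bar v,\bar u],\bar u]=[[\bar v,\bar u],\bar v]=0$ since $[[v,u],u],[[v,u],v]\in V_{[v,u]}$, hence in $N$. Thus by the universal property of the presentation \eqref{eq:Heisenberg} there is a surjective Lie homomorphism $H\twoheadrightarrow L/N$, $x\mapsto\bar u$, $y\mapsto\bar v$.

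The second task is to see that this surjection is an isomorphism, i.e. that $L/N$ is genuinely the Heisenberg algebra and not a proper quotient of it. The only proper quotients of $H$ are abelian (of dimension $\le 2$) or $H$ modulo an ideal not containing $[y,x]$; but $L/N$ is non-abelian since $[\bar v,\bar u]\neq 0$, and the image of $[y,x]$ in $L/N$ is $[\bar v,\bar u]\neq 0$, so the kernel of $H\twoheadrightarrow L/N$ meets the one-dimensional space $k[y,x]$ trivially and, being an ideal of $H$ contained in the two-dimensional $H/k[y,x]$ with the further property of being central-complemented, must be zero. Hence $L/N\cong H$ via $\bar u\mapsto x$, $\bar v\mapsto y$, equivalently there is a Lie homomorphism $L\to H$, $u\mapsto x$, $v\mapsto y$, exactly as required in the hypothesis of Theorem~\ref{theo:freesymmetricresiduallynilpotent}.

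With this in hand, Theorem~\ref{theo:freesymmetricresiduallynilpotent} applies verbatim to $L$: the elements $S$ and $T$ as defined in the statement lie in $\mathfrak{D}(L)$, they generate a free $k$-algebra on $\{S,T\}$, and they are symmetric with respect to the principal involution of $\mathfrak{D}(L)$. It then remains only to transport these conclusions from $\mathfrak{D}(L)$ to $\mathfrak{D}(K)$. Since $L$ is a Lie subalgebra of $K$, Proposition~\ref{prop:canonicalfieldoffractions}(1) gives a commutative square of embeddings $U(L)\hookrightarrow U(K)$, $\mathfrak{D}(L)\hookrightarrow\mathfrak{D}(K)$; the principal involution of $\mathfrak{D}(K)$ restricts to the principal involution of $\mathfrak{D}(L)$ (both are the canonical extensions of $x\mapsto -x$, which are compatible with the inclusion $U(L)\hookrightarrow U(K)$). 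Therefore $S$ and $T$, viewed inside $\mathfrak{D}(K)$, still generate a free $k$-algebra on $\{S,T\}$ and are still symmetric, proving (1) and (2).

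I expect the main obstacle to be the bookkeeping in the first two paragraphs: one must be careful that $N=L\cap V_{[v,u]}$ is the right ideal — in particular that $[v,u]\notin N$ and that $[[v,u],u],[[v,u],v]\in N$ — which hinges on correctly reading off the properties of the terms $V_x$ and $\Lambda_x$ from the definition of a central series, and on checking that the resulting non-abelian three-or-fewer-generated quotient is forced to be exactly $H$ by the dimension-and-centrality characterization of $H$ recalled after \eqref{eq:Heisenberg}. Everything downstream of the isomorphism $L/N\cong H$ is an immediate appeal to the previously established Theorem~\ref{theo:freesymmetricresiduallynilpotent} and Proposition~\ref{prop:canonicalfieldoffractions}.
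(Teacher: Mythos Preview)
Your proof is correct and follows the same strategy as the paper: you choose the same ideal $N=L\cap V_{[v,u]}$, reduce to Theorem~\ref{theo:freesymmetricresiduallynilpotent}, and then transport the conclusion to $\mathfrak{D}(K)$ via Proposition~\ref{prop:canonicalfieldoffractions}(1). The only difference is in how $L/N\cong H$ is verified. The paper first passes through $\Lambda_t=\Lambda_u\cap\Lambda_v$ to deduce $u,v\notin\Lambda_w$ and $[L,[L,L]]\subseteq N$, and then invokes the characterization of $H$ as the unique three-dimensional Lie algebra with one-dimensional central derived subalgebra; you instead read off the two Heisenberg relations directly from $[K,\Lambda_w]\subseteq V_w$ and use that the only non-abelian quotient of $H$ is $H$ itself. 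Your route is a bit shorter and avoids the $\Lambda_t$ detour. One sentence should be tightened: your justification that the kernel of $H\twoheadrightarrow L/N$ vanishes is garbled (``contained in the two-dimensional $H/k[y,x]$ \ldots\ central-complemented'' does not parse). Simply say that every nonzero ideal of $H$ contains $z=[y,x]$ --- indeed, if $0\neq a=\alpha x+\beta y+\gamma z$ lies in an ideal $I$, then $[y,a]=\alpha z$ and $[x,a]=-\beta z$ force $z\in I$ unless $\alpha=\beta=0$, in which case $a\in kz$ already --- so an ideal missing $z$ is zero.
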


\begin{proof}
For $x\in L$, let $V_x$ be the union of all  terms  of $\mathcal{K}$
that do not contain $x$, and let $\Lambda_x$ be the intersection of
all the terms of $\mathcal{K}$ which contain $x$.

Notice that $\Lambda_u\subseteq \Lambda_v$ or $\Lambda_v\subseteq
\Lambda_u$. Let $t\in\{u,v\}$ be such that
$\Lambda_t=\Lambda_u\cap\Lambda_v$. Consider $\Lambda_t$ and $V_t$.

Since $[K,\Lambda_t]\subseteq V_t$, the element $w=[v,u]\in V_t$. By
definition, $w\in \Lambda_w\setminus V_w$. Since $\Lambda_w\subseteq
V_t$, then $u,v\notin \Lambda_w$. Define $N=V_w\cap L$. Since
$[v,u]=w\notin V_w$, then $w\notin N$, and thus $L/N$ is not
commutative. Observe that $[L,L]\subseteq \Lambda_w$. Therefore
$[L,[L,L]]\subseteq [L,L\cap\Lambda_w]\subseteq L\cap V_w=N$. Hence
$L/N$ is a noncommutative 3-dimensional Lie $k$-algebra with basis
$\{\bar{u},\bar{v},\bar{w}\}$,  the classes of $u$, $v$ and $w$ in
$L/N$. Moreover $[L/N,L/N]=k\bar{w}$ which is contained in the
center of $L/N$. Therefore $L/N$ is the Heisenberg Lie $k$-algebra.

By Theorem~\ref{theo:freesymmetricresiduallynilpotent}, the result
holds for $\mathfrak{D}(L)$. Since
$\mathfrak{D}(L)\hookrightarrow\mathfrak{D}(K)$ by
Proposition~\ref{prop:canonicalfieldoffractions}(1), the result follows.
\end{proof}

The first part of proof of
Corollary~\ref{coro:freesymmetricresiduallynilpotent} consists on
showing that there exists an ideal $N$ of $L$ such that $L/N$ is
isomorphic to the Heisenberg Lie $k$-algebra. This argument is
analogous to the one used  for ordered groups in \cite[after
Proposition~3.4]{SanchezfreegroupalgebraMNseries}.


\section{The universal enveloping algebra is Ore}\label{sec:U(L)isOre}

We begin this section gathering technical results from \cite{Lichtmanfreeuniversalenveloping}.

\begin{prop}\label{prop:specializationfromgraduation}
Let $R$ be an Ore domain with a  valuation $\chi\colon R\rightarrow
\mathbb{Z}\cup\{\infty\}$. This valuation induces a filtration $$
\dotsb \supseteq R_{-1}\supseteq R_0\supseteq R_1\supseteq\dotsb $$
with $R_i=\{a\in R\mid \chi (a)\geq i\}$. Let $R[t,t^{-1}]$ be the
Laurent polynomial ring with coefficients in $R$.

\begin{enumerate}[\rm (1)]
\item The valuation $\chi$ can be extended to
$\chi\colon R[t,t^{-1}]\rightarrow \mathbb{Z}\cup\{0\}$ by
defining $$\chi \left(\sum_{i=l}^n t^ia_i\right)=\min_i
\{\chi(a_i)+i\},$$ for all $\sum_{i=l}^n t^ia_i\in R[t,t^{-1}]$.

\item Let $T=\{x\in R[t,t^{-1}]\mid \chi(x)\geq 0\}$ and
$T_0=\{x\in R[t,t^{-1}]\mid \chi(x)>0\}$. Then there exists a
ring isomorphism
$$\begin{array}{rcl} \gr(R) & \stackrel{\varphi}{\rightarrow} & T/T_0 \\
a\in R_i/R_{i+1} & \mapsto & t^{-i}a+T_0   \end{array}$$

\item $T$ and $\gr(R)$ are Ore domains.

\item $\mathcal{S}=T\setminus T_0$ is a left denominator set, $\mathcal{S}^{-1}T$
is a local ring with maximal ideal $\mathcal{S}^{-1}T_0$.

\item Let $D$ and $\Delta$ be the Ore division rings
of fractions of $T$ and $\gr(R)$, respectively. The ring
homomorphism $$T\rightarrow
T/T_0\stackrel{\varphi^{-1}}{\rightarrow}\gr(R)\hookrightarrow
\Delta$$ can be extended to a ring homomorphism $\psi\colon
\mathcal{S}^{-1}T\rightarrow \Delta$ which induces an isomorphism
$$\mathcal{S}^{-1}T/\mathcal{S}^{-1}T_0\cong \Delta.$$ This
isomorphism extends
$T/T_0\stackrel{\varphi^{-1}}{\rightarrow}\gr(R)$.
\end{enumerate}
\end{prop}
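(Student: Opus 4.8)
The plan is to verify each item in sequence, as several later parts rely on the earlier ones. First I would check (1): that $\chi$ extends to a valuation on $R[t,t^{-1}]$. Properties (i) and (iii) are routine. For multiplicativity, one writes two Laurent polynomials $\sum t^i a_i$ and $\sum t^j b_j$; since $t$ is central and $\chi$ is a valuation on $R$, the lowest-degree term of the product with respect to the total order ``$\chi(\cdot)+(\text{exponent})$'' is governed by the unique minimal contributions, and because $R$ is a domain with a valuation the leading terms cannot cancel; so $\chi(fg)=\chi(f)+\chi(g)$. This is the same argument as showing that a valuation extends to a polynomial ring; the only subtle point is noting that there is a well-defined ``leading form'' in $\gr(R)$ and that $\gr(R)$ is a domain (which follows since $R$ has a valuation).

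For (2), I would define $\varphi$ on $R_i/R_{i+1}$ by $a\mapsto t^{-i}a+T_0$ and check it is well-defined (if $a\in R_{i+1}$ then $\chi(t^{-i}a)=\chi(a)-i\geq 1$, so $t^{-i}a\in T_0$), additive, and multiplicative on homogeneous pieces (using centrality of $t$). Surjectivity: any $x=\sum t^i a_i\in T$ with $\chi(x)=0$ has some index $i_0$ with $\chi(a_{i_0})+i_0=0$, and modulo $T_0$ only those minimal terms survive, each of the form $t^{-\chi(a_i)}a_i$, i.e.\ in the image of $\varphi$; and the sum of such is again in the image since $\varphi$ is additive. Injectivity is clear from the well-definedness computation run backwards. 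For (3), $T$ is a domain as a subring of the domain $R[t,t^{-1}]$, and $\gr(R)\cong T/T_0$; the Ore condition on $T$: given $a\in T$, $s\in T\setminus\{0\}$, first solve the Ore condition in $R[t,t^{-1}]$ (which is Ore, being a localization of $R[t]$ over the Ore domain $R$), then clear denominators by multiplying by a suitable power of $t$, which does not change membership in $T$ after adjusting; that $\gr(R)$ is Ore then follows, or alternatively one invokes that a $\integers$-filtered ring whose associated graded is Ore is itself Ore and conversely under mild hypotheses — but here the cleanest route is: $\gr(R)\cong T/T_0$ and one checks the Ore condition directly on $T/T_0$ using homogeneous leading forms lifted from the Ore condition in $R$.

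For (4), $T_0$ is the set of elements of positive value, so $T\setminus T_0=\mathcal{U}$ consists exactly of the elements of value $0$; I would show this is a left Ore (denominator) set in $T$ using (3), and that $\mathcal{S}^{-1}T_0$ is a two-sided ideal whose quotient $\mathcal{S}^{-1}T/\mathcal{S}^{-1}T_0$ is a division ring, forcing $\mathcal{S}^{-1}T$ to be local with that maximal ideal — this is the same pattern already used in the construction of $\mathfrak{D}(L)$ in Section~\ref{sec:preliminaries} and in the Heisenberg lemma, so I would cite \cite[Theorem~10.28]{Lam2} or argue as there. Finally, for (5), the composite $T\to T/T_0\xrightarrow{\varphi^{-1}}\gr(R)\hookrightarrow\Delta$ sends $\mathcal{S}=T\setminus T_0$ into nonzero homogeneous elements, hence into units of $\Delta$, so by the universal property of Ore localization it extends to $\psi\colon\mathcal{S}^{-1}T\to\Delta$; the kernel of $\psi$ contains $\mathcal{S}^{-1}T_0$ and, since $\mathcal{S}^{-1}T/\mathcal{S}^{-1}T_0$ is already a division ring by (4) mapping onto the division ring $\Delta$ generated by the image, the induced map is an isomorphism extending $\varphi^{-1}$. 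The main obstacle I anticipate is (3): verifying the Ore condition for $T$ (equivalently for $\gr(R)$) cleanly, since one must lift the Ore relation from $R[t,t^{-1}]$ while staying inside the subring $T$ of nonnegative-value elements — the bookkeeping with powers of $t$ to restore nonnegativity is where care is needed, though it is ultimately a routine-but-fiddly computation of the kind that also appears in \cite{Lichtmanvaluationmethods}.
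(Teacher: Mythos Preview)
Your proposal is correct, and in fact it does considerably more work than the paper's own proof, which consists entirely of citations: parts (1) and (2) to \cite[p.~87]{Cohnskew}, parts (3) and (4) to \cite[Propositions~17 and 18(i)]{Lichtmanfreeuniversalenveloping}, and part (5) to \cite[Proposition~18(ii)]{Lichtmanfreeuniversalenveloping}. The proposition is explicitly introduced as ``gathering technical results'' from Lichtman, so the authors never intended to supply an argument.

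Your sketches are essentially what those references contain. In particular, your handling of the point you flagged as the main obstacle --- showing $T$ is Ore by solving the Ore relation in $R[t,t^{-1}]$ and then multiplying both $s'$ and $a'$ by a single power $t^n$ large enough to force $\chi(t^n s'),\chi(t^n a')\geq 0$ --- is exactly the standard manoeuvre, and it works because $t$ is central and $\chi(t^n f)=n+\chi(f)$. Likewise, your argument for (5) via the universal property of Ore localization (elements of $\mathcal{S}$ map to nonzero elements of the domain $\gr(R)$, hence to units in $\Delta$) is the expected one. The only place where your outline could be tightened is in (3) for $\gr(R)$: rather than transferring the Ore property from $T$ to $T/T_0$ (quotients of Ore domains need not be Ore in general), it is cleaner to argue directly in $\gr(R)$ by lifting homogeneous $\bar a,\bar s$ to $a,s\in R$, applying the Ore condition in $R$, and passing to leading forms --- which is one of the routes you already mention.
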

\begin{proof}
(1) is proved in \cite[p.~87]{Cohnskew}.

(2) is shown in \cite[p.~87]{Cohnskew}.

(3) is proved in \cite[Propositions~17 and
18(i)]{Lichtmanfreeuniversalenveloping}.

(4) is shown in \cite[Proposition~17]{Lichtmanfreeuniversalenveloping}.

(5) is proved in
\cite[Proposition~18(ii)]{Lichtmanfreeuniversalenveloping}.
\end{proof}

Now we are ready to prove the main result of this section. The technique used is from \cite{Lichtmanfreeuniversalenveloping}.
\begin{theo}\label{theo:freesymmetricOre}
Let $k$ be a field of  characteristic zero. Let $L$ be a Lie
$k$-algebra such that its universal enveloping algebra $U(L)$ is an
Ore domain. Let $\mathfrak{D}(L)$ be its Ore division ring of
fractions. Let $u,v\in L$ such that the Lie subalgebra generated by
them is of dimension at least three.

Define $w=[v,u]$, $V=\frac{1}{2}w(uv+vu)w$, and consider the
following elements of $\mathfrak{D}(L)$:
$$S=(V-\frac{1}{3}w^3)(V+\frac{1}{3}w^3)^{-1} +
(V-\frac{1}{3}w^3)^{-1}(V+\frac{1}{3}w^3),$$
$$T=(w+v^2)^{-1}(w-v^2)S(w+v^2)(w-v^2)^{-1}.$$ The following hold
true.
\begin{enumerate}[\rm(1)]
\item The $k$-subalgebra of $\mathfrak{D}(L)$ generated by $S$ and $T$ is the free  $k$-algebra
on the set $\{S,T\}$.
\item The elements $S$ and $T$ are symmetric with respect to the
principal involution on $\mathfrak{D}(L)$.
\end{enumerate}
\end{theo}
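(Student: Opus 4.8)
The plan is to derive the theorem from the residually nilpotent case, Corollary~\ref{coro:freesymmetricresiduallynilpotent}, by a specialization to an associated graded ring through Proposition~\ref{prop:specializationfromgraduation}; this is the ``technique from \cite{Lichtmanfreeuniversalenveloping}'' announced at the start of the section. First one notes that $S$ and $T$ really are defined: the hypothesis $\dim\langle u,v\rangle\ge 3$ forces $u$, $v$, $w=[v,u]$ to be linearly independent in $L$, so the symbols in $\gr U(L)=S(L)$ of $w\pm v^{2}$ and of $V\pm\frac13 w^{3}$ are nonzero (namely $\pm v^{2}$ and $w^{2}uv$), whence these four elements are invertible in the division ring $\mathfrak{D}(L)$. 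Granting assertion (1), assertion (2) follows by the same computation with the involution as at the end of the proof of Theorem~\ref{theo:freesymmetricHeisenberg}, reading $u,v,w$ for $x,y,z$: the principal involution fixes $V$ (since $w^{*}=-w$ and $(uv+vu)^{*}=uv+vu$), interchanges $V-\frac13 w^{3}$ with $V+\frac13 w^{3}$, and carries $(w+v^{2})^{-1}(w-v^{2})$ to $(w+v^{2})(w-v^{2})^{-1}$, so $S^{*}=S$ and $T^{*}=T$. Thus the whole content is (1).

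For (1) I would first replace $L$ by the Lie subalgebra $M=\langle u,v\rangle$: by Proposition~\ref{prop:canonicalfieldoffractions}(1) there is an embedding $\mathfrak{D}(M)\hookrightarrow\mathfrak{D}(L)$, and $S,T$ are obtained from $u,v$ by ring operations, so it is enough to prove they generate a free $k$-algebra inside $\mathfrak{D}(M)$; thus I may assume $L$ is generated by $u,v$. If $L$ itself admits a central series there is nothing more to do, since Corollary~\ref{coro:freesymmetricresiduallynilpotent} applies directly, so assume it does not. I would then choose a $\mathbb{Z}$-valued valuation $\chi$ on $R=U(L)$ (read off a suitable filtration of $U(L)$ itself, not of $L$ — for instance presenting $U(L)$ as an iterated Ore extension along a flag of subalgebras of $L$ and using the degree in the outermost variable, and iterating Proposition~\ref{prop:specializationfromgraduation} if necessary) so that the Ore division ring of fractions $\Delta$ of the Ore domain $\gr_{\chi}R$ is covered by an earlier case; concretely, $\gr_{\chi}R$ should be, or contain, $U(\bar L)$ for a Lie $k$-algebra $\bar L$ carrying a central series whose distinguished generators are the leading terms $\bar u,\bar v$ of $u,v$, still spanning with $[\bar v,\bar u]$ a subalgebra of dimension $\ge 3$. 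Proposition~\ref{prop:specializationfromgraduation} then hands us a local ring $\mathcal{S}^{-1}T$ with $U(L)\subseteq T\subseteq\mathcal{S}^{-1}T$ and a $k$-algebra homomorphism $\psi\colon\mathcal{S}^{-1}T\to\Delta$ that sends an element of $\chi$-value $0$ to its leading term.

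The crux is to verify that $w+v^{2}$, $w-v^{2}$, $V+\frac13 w^{3}$ and $V-\frac13 w^{3}$ all have $\chi$-value $0$ with leading terms that are units of $\gr_{\chi}R$: this is exactly the leading-coefficient bookkeeping carried out, in its series-ring form, in the ``claim'' inside the proof of Theorem~\ref{theo:freesymmetricresiduallynilpotent}, and it is precisely why these particular (rather than simpler) elements were chosen — the extra copies of $w$ inside $V$ and the $\pm\frac13 w^{3}$ shifts are what make the leading terms come out generically. Granting this, $S,T\in\mathcal{S}^{-1}T$ and $\psi(S)$, $\psi(T)$ are the elements built from $\bar u,\bar v$ by the same formulas, which by Corollary~\ref{coro:freesymmetricresiduallynilpotent} generate a free $k$-algebra in $\mathfrak{D}(\bar L)\subseteq\Delta$; since $\psi$ is a ring homomorphism, $S$ and $T$ can satisfy no nontrivial polynomial relation, so they generate a free $k$-algebra in $\mathfrak{D}(L)$, proving (1). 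The hard part is the construction of the valuation $\chi$ when $L$ is not residually nilpotent (solvable-but-not-nilpotent or perfect $L$), where no filtration of $L$ suffices and one must work one level up, inside $U(L)$; the verification of the leading terms of the four denominators, routine in spirit, is the other point demanding care.
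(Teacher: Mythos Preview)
Your overall plan is right---reduce to $L=\langle u,v\rangle$, specialize via Proposition~\ref{prop:specializationfromgraduation} to a graded situation, and invoke Corollary~\ref{coro:freesymmetricresiduallynilpotent}---and your treatment of (2) and of well-definedness is fine. But two concrete points in the execution of (1) are off.

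First, the valuation is not the hard part you fear. The paper simply takes $\chi$ to be the filtration of $U(L)$ by \emph{word length in the generators $u,v$}: so $\chi(u)=\chi(v)=-1$ and $\chi(w)=-2$. One has canonically $\gr U(L)\cong U(\gr L)$, and $\gr L$ is a \emph{negatively graded} Lie algebra generated in degree $-1$; any such Lie algebra is automatically residually nilpotent, with no case distinction needed. Your suggestion of an iterated Ore-extension degree, with possible iteration of Proposition~\ref{prop:specializationfromgraduation}, is unnecessary, and your claim that ``no filtration of $L$ suffices'' in the non-residually-nilpotent case is not correct.

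Second, and more seriously, with this (or any reasonable) $\chi$ one does \emph{not} have $U(L)\subseteq T$, and the elements $w\pm v^{2}$, $V\pm\tfrac13 w^{3}$ do \emph{not} have $\chi$-value $0$: they have values $-2$ and $-6$ respectively. So $S$ and $T$ are not visibly in $\mathcal S^{-1}T$, and you cannot apply $\psi$ directly. The missing step is a homogenization by the central variable $t$: replace $u,v,w$ by $tu,tv,t^{2}w$ to form $V'=t^{6}V$ and
\[
S'=\bigl(V'-\tfrac13(t^{2}w)^{3}\bigr)\bigl(V'+\tfrac13(t^{2}w)^{3}\bigr)^{-1}+\bigl(V'-\tfrac13(t^{2}w)^{3}\bigr)^{-1}\bigl(V'+\tfrac13(t^{2}w)^{3}\bigr),
\]
and analogously $T'$. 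Now each factor $t^{6}(V\pm\tfrac13 w^{3})$ and $t^{2}(w\pm v^{2})$ lies in $T\setminus T_{0}=\mathcal S$ (this is where the dimension-$\geq 3$ hypothesis and the PBW theorem enter, to see $\chi(V\pm\tfrac13 w^3)=-6$), so $S',T'\in\mathcal S^{-1}T$ and $\psi(S')=\overline S$, $\psi(T')=\overline T$ in $\Delta$. The point---what the paper calls ``magic''---is that because $t$ is central the powers of $t$ cancel in each ratio, so in fact $S'=S$ and $T'=T$. That is the precise reason the generators were engineered so that every summand in $V\pm\tfrac13 w^{3}$ has the same word length $6$ and every summand in $w\pm v^{2}$ has word length $2$; your remark that the design makes ``the leading terms come out generically'' points in the right direction but misses the actual mechanism.
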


\begin{proof}
Let $L_1$ be the Lie subalgebra of $L$ generated by $u$ and $v$.

Since $U(L)$ is an Ore domain, $U(L_1)$ is also an Ore domain. Moreover,
$\mathfrak{D}(L_1)\subseteq \mathfrak{D}(L)$. Thus, we may suppose that $L$ is generated by $u$ and $v$.

For $l\leq 0$, let $U_l(L)$ be the $k$-subspace of $U(L)$ spanned by all the monomials in $u$ and $v$ of length $\leq -l$.
In this way, we obtain  a filtration
\begin{equation}\label{eq:filtrationofU(H)}
\dotsb \supseteq U_{l}(L)\supseteq U_{l+1}(L)\supseteq \dotsb
\supseteq U_{-1}(L) \supseteq U_0(L)=k\supseteq 0=U_1(L)
\end{equation}
of $U(L)$. Let $\gr(U(L))$ be the graded ring associated to this filtration: $$\gr(U(L))=
\bigoplus_{l\leq 0} U_{l}(L)/U_{l+1}(L).$$
Let $L_{l}=U_l(L)\cap L$, $l\leq 0$, be the induced filtration in $L$. We obtain in the natural way the
graded Lie $k$-algebra $\gr(L)$ associated to this filtration. It can be shown that
$\gr(L)$ generates $\gr(U(L))$ as a $k$-algebra and that
\begin{equation}\label{eq:isomorphismofgraded}
U(\gr(L))\cong\gr(U(L))
\end{equation}
under a natural isomorphism, i.e. the one extending
$\gr(L)\rightarrow \gr(U(L))$
\cite[Proposition~14]{Lichtmanfreeuniversalenveloping} or
\cite[Lemma~2.1.2]{Boiscorpsenveloppants}. In what follows,  the two
objects in \eqref{eq:isomorphismofgraded} will be identified.

As a first consequence of \eqref{eq:isomorphismofgraded}, we get
that $\gr(U(L))$ is a domain. Therefore the filtration
\eqref{eq:filtrationofU(H)} induces a valuation $\chi\colon
U(L)\rightarrow \mathbb{Z}\cup\{\infty\}$ by $\chi(x)=l$ if $x\in
U_l(L)\setminus U_{l+1}(L)$ and $\chi(0)=\infty$
\cite[Proposition~2.6.1]{Cohnskew}.

Now we can apply the results of
Proposition~\ref{prop:specializationfromgraduation} to $R=U(L)$.
Consider the Laurent polynomial ring $U(L)[t,t^{-1}]$, and extend
$\chi$ to a valuation, also denoted by $\chi$,
$U(L)[t,t^{-1}]\rightarrow \integers \cup\{\infty\}$, defined, as in
Proposition~\ref{prop:specializationfromgraduation}(1), by
$$\chi \left(\sum_{i=l}^n t^ia_i\right)=\min_i
\{\chi(a_i)+i\},$$ for all $\sum_{i=l}^n t^ia_i\in
U(L)[t,t^{-1}]$.

Let $T=\{x\in U(L)[t,t^{-1}]\mid \chi(x)\geq 0\}$ and
$T_0=\{x\in U(L)[t,t^{-1}]\mid \chi(x)>0\}$. Then there
exists a ring isomorphism $\varphi\colon \gr(U(L))\rightarrow T/T_0$
by Proposition~\ref{prop:specializationfromgraduation}(2).

Consider $u,v$ and $w=[v,u]$. Note that $u,v\in U_{-1}(L)$ and $w\in
U_{-2}(L)\setminus U_{-1}(L)$ because $L$ is not two-dimensional. In
other words, $\chi(u)=\chi(v)=-1$ and $\chi(w)=-2$. Thus, also
$u,v\in L_{-1}$ and $w\in L_{-2}\setminus L_{-1}$. Denote by
$\bar{u}$, $\bar{v}$ the class of $u,v\in U_{-1}(L)/U_0(L)$ and also
the class of $u$ and $v$ in $L_{-1}/L_0$. Denote by $\bar{w}$ the
class of $w$ in $U_{-2}(L)/U_{-1}(L)$ and in $L_{-2}/L_{-1}$. Then
$\varphi(\bar{u})=tu+T_0$ and $\varphi(\bar{v})=tv+T_0$ and
$\varphi(\bar{w})=t^2w+T_0$. By
Proposition~\ref{prop:specializationfromgraduation}, $T$ and
$U(\gr(L))$ are Ore domains. Let $D$ be the Ore division ring of
fractions of $T$ and let $\Delta$ be the Ore division ring of
fractions of $U(\gr(L))$.

Now, $\gr(L)$ is a (negatively) graded Lie $k$-algebra which is not
commutative $(w\in L_{-2}\setminus L_{-1})$. Thus $\gr(L)$ has to be
a residually nilpotent Lie $k$-algebra. Observe that
$[\bar{v},\bar{u}]=\bar{w}$ as elements of $\gr(L)$.

Now define
$\overline{V}=\frac{1}{2}\bar{w}(\bar{u}\bar{v}+\bar{v}\bar{u})\bar{w}$,
$$\overline{S}=(\overline{V}-\frac{1}{3}\bar{w}^3)(V+\frac{1}{3}\bar{w}^3)^{-1} +
(\overline{V}-\frac{1}{3}\bar{w}^3)^{-1}(\overline{V}+\frac{1}{3}\bar{w}^3),$$
$$\overline{T}=(\bar{w}+\bar{v}^2)^{-1}(\bar{w}-\bar{v}^2)\overline{S}(\bar{w}+\bar{v}^2)(\bar{w}-\bar{v}^2)^{-1}.$$
Then Corollary~\ref{coro:freesymmetricresiduallynilpotent}(1) proves
that $\overline{S}$ and $\overline{T}$ generate a free $k$-algebra
in $\Delta$.

Let $\mathcal{S}=T\setminus T_0$. By
Proposition~\ref{prop:specializationfromgraduation}(4),
$\mathcal{S}$ is a left denominator set of $T$, and
$\mathcal{S}^{-1}T$ is a local ring with maximal ideal
$\mathcal{S}^{-1}T_0$. Moreover, there exists a morphism of rings
$\psi\colon \mathcal{S}^{-1}T\rightarrow \Delta$ which induces the
isomorphism $\mathcal{S}^{-1}T/\mathcal{S}^{-1}T_0\cong \Delta$, by
Proposition~\ref{prop:specializationfromgraduation}(5).

Consider $tu$, $tv$, $t^2w\in T\setminus T_0$. Observe that
$\psi(tu)=\bar{u}$, $\psi(tv)=\bar{v}$ and $\psi(t^2w)=\bar{w}$.

Define $V'=\frac{1}{2}wt^2(utvt+vtut)wt^2=t^6V$. Since $u,v,w$ are
$k$-linearly independent in $L$, the PBW-Theorem implies that $u$, $v$ and $uv+vu=w+2uv$ are $k$-linearly
independent in $U(L)$. Hence $uv+vu\in U_{-2}(L)\setminus U_{-1}(L)$
and $\chi(uv+vu)=-2$. Therefore
$\chi(V)=\chi(w(uv+vu)w)=-6.$  By the definition of
$\chi$,  $\chi(V')=0$.  Hence $V'\in T\setminus T_0$.
In the same way, one can show that
$V'-\frac{1}{3}(t^2w)^3=t^6(V-\frac{1}{3}w^3)\in T\setminus T_0$,
$V'+\frac{1}{3}(t^2w)^3=t^6(V+\frac{1}{3}w^3)\in T\setminus T_0 $,
$t^2w+(tv)^2=t^2(w+v^2)\in T\setminus T_0$ and
$t^2w-(tv)^2=t^2(w-v^2)\in T\setminus T_0$. Now define two elements
$S'$, $T'$ of $\mathcal{S}^{-1}T$ as follows:
$$S'=(V'-\frac{1}{3}(t^2w)^3)(V'+\frac{1}{3}(t^2w)^3)^{-1} +
(V'-\frac{1}{3}(t^2w)^3)^{-1}(V'+\frac{1}{3}(t^2w)^3),$$
$$T'=(t^2w+(tv)^2)^{-1}(t^2w-(tv)^2)S'(t^2w+(tv)^2)(t^2w-(tv)^2)^{-1}.$$
Observe that $\psi(S')=\overline{S}$ and $\psi(T')=\overline{T}$.
Therefore the $k$-algebra generated by $\{S',T'\}$ is the free
$k$-algebra on $\{S',T'\}$.

And here comes magic. Since $t$ is an element of the center, it
follows that $T'=T$ and $S'=S$. Thus (1) is proved.

For (2), as in the proof of
Theorem~\ref{theo:freesymmetricHeisenberg}, it is not difficult to
prove that $S$ and $T$ are symmetric with respect to the principal
involution.
\end{proof}

\begin{rem}
Let $k$ be a field of characteristic zero. Let $L$ be a Lie
$k$-algebra generated by two nonconmuting elements $u$ and $v$ such
that $U(L)$  is an Ore domain. Suppose that $L$ is of dimension at
least three. By Lichtman's technique used in
Theorem~\ref{theo:freesymmetricOre}, the problem of finding a free
$k$-algebra (generated by symmetric elements or not) inside
$\mathfrak{D}(L)$ is reduced to the problem of finding a free Lie
$k$-algebra inside  $\mathfrak{D}(\gr(L))$, where $\gr(L)$ is a
positively graded Lie $k$-algebra, $\gr(L)=\bigoplus_{i=1}^\infty
\gr_i(L)$, which is not commutative and  is generated by $\gr_1(L)$.
Then, by Corollary~\ref{coro:freesymmetricresiduallynilpotent},
$\mathfrak{D}(\gr(L))$ contains a free $k$-algebra because it is a
residually nilpotent Lie $k$-algebra

There is another way of obtaining a free  $k$-algebra inside
$\mathfrak{D}(L)$ as follows. We have the isomorphism
$\frac{\gr(L)}{\bigoplus_{i=3}^\infty\gr_i(L)}\cong H$, the
Heisenberg Lie $k$-algebra. By
\cite[Proposition~7.7]{Lichtmanvaluationmethodsgrouprings} and
\cite[Corollary~7.2]{Lichtmanvaluationmethodsgrouprings}, there
exists a specialization from $\mathfrak{D}(\gr(L))$ to
$\mathfrak{D}(H)$ extending the homomorphism of Lie $k$-algebras
$\gr(L)\rightarrow
\frac{\gr(L)}{\bigoplus_{i=3}^\infty\gr_i(L)}\cong H$. Thus a
suitable preimage of the free  $k$-algebra obtained in
Theorem~\ref{theo:freesymmetricHeisenberg} does the work.  We would
like to remark that our method of obtaining the free  $k$-algebra
inside $\mathfrak{D}(\gr(L))$ is more elementary since
\cite[Proposition~7.7]{Lichtmanvaluationmethodsgrouprings} and
\cite[Corollary~7.2]{Lichtmanvaluationmethodsgrouprings} rely on
highly nontrivial facts. Moreover, our method is more general
because it works for all classes of Lie $k$-algebras appearing in
Examples~\ref{ex:centralseries},  while this other method only works
for positively graded Lie $k$-algebras (as $\gr(L)$). \qed
\end{rem}

When the Lie subalgebra generated by $u$ and $v$ is of dimension two, we cannot apply the methods developed thus far, but we have the following
consequence of Cauchon's Theorem.

\begin{prop}\label{prop:twodimensionalcase}
Let $k$ be a field of characteristic zero. Let $M$ be the
noncommutative two dimensional Lie $k$-algebra. Thus $M$ has a basis
$\{e,f\}$ such that $[e,f]=f$. Define
$s=(e-\frac{1}{3})(e+\frac{1}{3})^{-1}$ and $u=(1-f)(1+f)^{-1}$.
Consider the embedding $U(M)\hookrightarrow\mathfrak{D}(M)$. Then
\begin{enumerate}[\rm(1)]
\item the elements $s+s^{-1}$ and $u(s+s^{-1})u^{-1}$ are symmetric
with respect to the principal involution;
\item the $k$-algebra generated by $\{s+s^{-1},\ u(s+s^{-1})u^{-1}\}$ is
the free $k$-algebra on $\{s+s^{-1},\ u(s+s^{-1})u^{-1}\}$.
\end{enumerate}
\end{prop}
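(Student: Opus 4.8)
The plan is to realize $\mathfrak{D}(M)$ as a division subring of the skew field $K(p;\sigma)$ appearing in Cauchon's Theorem and then argue exactly as in Proposition~\ref{prop:freealgebrainWeyl} and Theorem~\ref{theo:freesymmetricHeisenberg}. First note that $U(M)=k[f][e;\delta]$ with $\delta(f)=-f$ is a differential operator algebra over $k[f]$, hence a Noetherian domain, hence an Ore domain, so $\mathfrak{D}(M)$ is its Ore division ring of fractions; moreover $e+\tfrac13$ and $1+f$ are nonzero in $U(M)$ by the PBW Theorem and therefore invertible in $\mathfrak{D}(M)$, so $s$ and $u$ make sense. Take $K=k(t)$ and $\sigma(t)=t-1$ as in Subsection~\ref{sec:Cauchon}. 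In $K[p;\sigma]$ one has $tp=p(t-1)$, so $[-t,p]=-(tp-pt)=p$; the universal property of $U(M)$ therefore yields a $k$-algebra homomorphism $\Phi\colon U(M)\to K(p;\sigma)$ with $\Phi(e)=-t$ and $\Phi(f)=p$. A PBW computation shows $\Phi$ is injective: the images $\Phi(f^{j}e^{i})=(-1)^{i}p^{j}t^{i}$ are $k$-linearly independent in $K[p;\sigma]$. Since $K(p;\sigma)$ is a division ring, $\Phi$ extends along $U(M)\hookrightarrow\mathfrak{D}(M)$, by the universal property of Ore localization, to an embedding $\Phi\colon\mathfrak{D}(M)\hookrightarrow K(p;\sigma)$.

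Next I would compute the images of $s$ and $u$. From $\Phi(e\mp\tfrac13)=-(t\pm\tfrac13)$ one gets $\Phi(s)=(t+\tfrac13)(t-\tfrac13)^{-1}$, and $\Phi(u)=(1-p)(1+p)^{-1}$; these are precisely the elements $s$ and $u$ of Cauchon's Theorem for the choice $\alpha=-\tfrac13$, $\beta=\tfrac13$. The homography attached to $\sigma$ is $h(z)=z-1$, so $\mathcal{H}\cdot\alpha=\{-\tfrac13-n\mid n\in\integers\}$ and $\mathcal{H}\cdot\beta=\{\tfrac13-n\mid n\in\integers\}$ are infinite (as $\operatorname{char}k=0$) and different (their difference $-\tfrac23$ is not an integer). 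Since $\operatorname{char}k=0\neq2$, Cauchon's Theorem applies and the $k$-subalgebra of $K(p;\sigma)$ generated by $\xi=\Phi(s)$, $\eta=\Phi(u)\Phi(s)\Phi(u)^{-1}=\Phi(usu^{-1})$ and their inverses is the free group $k$-algebra on $\{\xi,\eta\}$. Inside that free group algebra, Corollary~\ref{coro:freeinsidegroupring} shows that $\xi+\xi^{-1}=\Phi(s+s^{-1})$ and $\eta+\eta^{-1}=\Phi\bigl(u(s+s^{-1})u^{-1}\bigr)$ generate a free $k$-algebra on those two elements; since $\Phi$ is injective, so do $s+s^{-1}$ and $u(s+s^{-1})u^{-1}$ in $\mathfrak{D}(M)$. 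This proves~(2).

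For~(1) I would compute directly in $\mathfrak{D}(M)$. The principal involution $*$ fixes $k$, reverses products, and sends $e\mapsto-e$, $f\mapsto-f$; hence $(e\pm\tfrac13)^{*}=-(e\mp\tfrac13)$ and, since $e-\tfrac13$ and $e+\tfrac13$ commute (both lie in $k[e]$),
\[
s^{*}=\bigl((e+\tfrac13)^{*}\bigr)^{-1}(e-\tfrac13)^{*}=(e-\tfrac13)^{-1}(e+\tfrac13)=(e+\tfrac13)(e-\tfrac13)^{-1}=s^{-1},
\]
so $(s+s^{-1})^{*}=s^{-1}+s$. Similarly $(1\pm f)^{*}=1\mp f$, and $1-f$, $1+f$ commute (both lie in $k[f]$), so $u^{*}=(1-f)^{-1}(1+f)=(1+f)(1-f)^{-1}=u^{-1}$, and therefore $\bigl(u(s+s^{-1})u^{-1}\bigr)^{*}=(u^{-1})^{*}(s+s^{-1})^{*}u^{*}=u(s+s^{-1})u^{-1}$.

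The only step needing real care is the identification in the first paragraph, together with checking that $\alpha=-\tfrac13$ and $\beta=\tfrac13$ indeed have infinite and distinct $\mathcal{H}$-orbits; this is exactly what dictates the particular constants $\pm\tfrac13$ in the definitions of $s$ and $u$ (and also guarantees that the elements being inverted are nonzero in $U(M)$). Once the correct signs are pinned down, part~(2) is an immediate appeal to Cauchon's Theorem and Corollary~\ref{coro:freeinsidegroupring}, and part~(1) is the routine involution bookkeeping above.
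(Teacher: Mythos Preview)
Your argument is correct and follows the same strategy as the paper: apply Cauchon's Theorem to obtain a free group $k$-algebra on $\{s,usu^{-1}\}$, invoke Corollary~\ref{coro:freeinsidegroupring} to deduce that $s+s^{-1}$ and $u(s+s^{-1})u^{-1}$ generate a free $k$-algebra, and then verify symmetry via $s^{*}=s^{-1}$, $u^{*}=u^{-1}$.

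The only difference is a small detour. From $[e,f]=f$ one gets $ef=f(e+1)$, so $U(M)$ is \emph{already} the skew polynomial ring $k[e][f;\sigma]$ with $\sigma(e)=e+1$; the paper simply applies Cauchon's Theorem directly to this ring with $t=e$, $p=f$, $\alpha=\tfrac13$, $\beta=-\tfrac13$ (the homography $h(z)=z+1$ has infinite distinct orbits at $\pm\tfrac13$). Your auxiliary embedding $\Phi\colon U(M)\hookrightarrow K(p;\sigma)$ with $\sigma(t)=t-1$, $e\mapsto -t$, $f\mapsto p$ is correct and achieves the same end, but it is not needed: recognizing $U(M)$ itself as a Cauchon-type skew polynomial ring makes the injectivity and orbit checks immediate and avoids tracking signs through $\Phi$.
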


\begin{proof}
Since $[e,f]=ef-fe=f$, $ef=f(e+1)$. Thus  $U(M)$ can be seen as a
skew polynomial $k$-algebra, $U(M)=k[e][f;\sigma]$, where
$\sigma(e)=e+1$.

According to Cauchon's Theorem, if we define
$s=(e-\frac{1}{3})(e+\frac{1}{3})^{-1}$ and $u=(1-f)(1+f)^{-1}$, the
elements $s$ and $usu^{-1}$ generate a free group $k$-algebra. Then
Corollary~\ref{coro:freeinsidegroupring} implies that the
$k$-algebra generated by $\{s+s^{-1},\ u(s+s^{-1})u^{-1}\}$ is the
free $k$-algebra on $\{s+s^{-1},\ u(s+s^{-1})u^{-1}\}$. Moreover
$$s^*=\left((e-\frac{1}{3})(e+\frac{1}{3})^{-1}\right)^*=
(-e+\frac{1}{3})^{-1}(-e-\frac{1}{3})=(e+\frac{1}{3})(e-\frac{1}{3})^{-1}=s^{-1},$$
$$u^*=\left((1-f)(1+f)^{-1}\right)^*=(1-f)^{-1}(1+f)=(1+f)(1-f)^{-1}=u^{-1}.$$
Therefore $s+s^{-1}$ and $u(s+s^{-1})u^{-1}$, are symmetric.
\end{proof}


\section{Further comments}\label{sec:furthercomments}

Let $k$ be a field of characteristic zero, $L$ be a noncommutative Lie $k$-algebra and $U(L)$ be its universal enveloping algebra.

Until now, we have proved that if $u,v\in L$ with $[u,v]\neq 0$ and
$K$ is the Lie $k$-subalgebra they generate, then
$\mathfrak{D}(K)\left(\subseteq\mathfrak{D}(L)\right)$  contains a
free $k$-algebra generated by symmetric elements with respect to the
principal involution of $\mathfrak{D}(L)$ and  we give explicit
generators of this free $k$-algebra, provided that either $U(K)$ is
an Ore domain or  there exists a morphism of Lie $k$-algebras
$K\rightarrow H$, $u\mapsto x$, $v\mapsto y$, where $H$ is the
Heisenberg Lie $k$-algebra.

Moreover, if $L$ is a Lie $k$-algebra that contains two elements
$u,v$ such that the Lie $k$-algebra generated by them is the free
Lie $k$-algebra on the set $\{u,v\}$, then $\mathfrak{D}(L)$
contains a free $k$-algebra generated by symmetric elements with
respect to the principal involution. Indeed, let $F$ be the free Lie
$k$-algebra generated by $\{u,v\}$. Recall that $U(F)$, the
universal enveloping algebra of $F$, is the free $k$-algebra
$\freealgebra k {u,v}$. Hence $u^2$ and $v^2$ are symmetric, and the
$k$-algebra they generate is the free $k$-algebra $\freealgebra
k{u^2,v^2}\subseteq U(F)\subseteq U(L)\subseteq \mathfrak{D}(L)$.
(Note that, in this case, the characteristic of $k$ need not be
zero.)

Now we raise the following natural question in the case of a field $k$ of characteristic zero. What do possible counterexamples of Lie $k$-algebras $L$ such that $\mathfrak{D}(L)$ does not contain a free noncommutative $k$-algebra generated by symmetric elements look like?

We assert that such a possible counterexample cannot be of
subexponential growth. Indeed, by \cite{Smithsubexponentialgrowth},
if $L$ is a Lie $k$-algebra  of subexponential growth, then $U(L)$
is of subexponential growth. Then, since a noncommutative free
$k$-algebra is of exponential growth, we obtain that $U(L)$ does not
contain a noncommutative free $k$-algebra and, therefore, $U(L)$ is
an Ore domain because any $k$-algebra which is not an Ore domain
must contain a noncommutative free $k$-algebra
\cite[Proposition~10.25]{Lam2}. Now recall that if $L_1$ is a Lie
subalgebra of $L$, then $U(L_1)$ is an Ore domain (because $U(L)$
is). Thus the claim is proved.

Therefore $L$ must be of exponential growth but  cannot contain a
noncommutative free Lie $k$-algebra. Also, $L$ must satisfy the
following properties: for any two noncommuting elements $u,v\in L$,
if we denote by $K$ the Lie $k$-subalgebra they generate,  then $K$
cannot be of subexponential growth, and $K$ cannot be mapped onto
$H$ (in particular $K$ can neither be residually nilpotent nor have
a central series). Although we think that such examples could exist,
we have not been able to find any.

\bigskip

The next theorem is the main result  of
\cite{Lichtmanfreeuniversalenveloping}. We show that the results
obtained thus far also prove it.

\begin{theo}
Let $k$ be a field of characteristic zero and $L$ be a
noncommutative Lie $k$-algebra. Then any division ring that contains
the universal enveloping algebra $U(L)$ of $L$ contains a
noncommutative free $k$-algebra.
\end{theo}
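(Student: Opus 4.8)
The plan is to reduce to the case of a Lie algebra generated by two elements and then dispose of it by splitting along the dichotomy \emph{Ore versus non-Ore}. Since $L$ is noncommutative there exist $u,v\in L$ with $[u,v]\neq 0$; let $K$ be the Lie $k$-subalgebra of $L$ generated by $\{u,v\}$. Then $U(K)\subseteq U(L)\subseteq D$, and, by the PBW Theorem, $U(K)$ is a domain with $\dim_k K\geq 2$. Exactly one of two situations occurs.

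Suppose first that $U(K)$ is \emph{not} an Ore domain. Then, being a $k$-algebra which is not an Ore domain, $U(K)$ already contains a noncommutative free $k$-algebra by \cite[Proposition~10.25]{Lam2}; since $U(K)\subseteq D$, there is nothing more to prove in this case, and no division-ring machinery is needed. Suppose instead that $U(K)$ \emph{is} an Ore domain, and let $\mathfrak{D}(K)$ be, as in Section~\ref{sec:preliminaries}, its Ore division ring of fractions. The inclusion $U(K)\hookrightarrow D$ is $\mathfrak{S}$-inverting for $\mathfrak{S}=U(K)\setminus\{0\}$, so by the universal property of the Ore localization it extends to a ring homomorphism $\mathfrak{D}(K)\to D$; this homomorphism is injective because $\mathfrak{D}(K)$ is a division ring, so $D$ contains a copy of $\mathfrak{D}(K)$. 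It therefore suffices to find a noncommutative free $k$-subalgebra of $\mathfrak{D}(K)$, and here we split according to $\dim_k K$. If $\dim_k K=2$, then, as $[u,v]\neq 0$, $K$ is the noncommutative two-dimensional Lie $k$-algebra, and Proposition~\ref{prop:twodimensionalcase} provides an explicit noncommutative free $k$-subalgebra of $\mathfrak{D}(M)\cong\mathfrak{D}(K)$. If $\dim_k K\geq 3$, then $U(K)$ is an Ore domain and the Lie $k$-subalgebra generated by $u$ and $v$---namely $K$ itself---has dimension at least three, so Theorem~\ref{theo:freesymmetricOre} applied to $K$ yields a noncommutative free $k$-subalgebra of $\mathfrak{D}(K)$ (in fact one generated by two symmetric elements). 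In all cases $D$ contains a noncommutative free $k$-algebra, which is the assertion.

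This is essentially a bookkeeping argument on top of the results already proved, so I do not expect a genuine obstacle. The one point that deserves care is the transfer of the free subalgebra from $\mathfrak{D}(K)$ into an arbitrary division ring $D$ containing $U(K)$: this works precisely because the Ore division ring of fractions is characterized by its universal $\mathfrak{S}$-inverting property, so every embedding of $U(K)$ into a division ring factors through $\mathfrak{D}(K)$. It is also worth noting that the hypothesis ``$U(L)$ is an Ore domain'' in Theorem~\ref{theo:freesymmetricOre} costs nothing here, since when $U(K)$ fails to be Ore the required free algebra is already found inside $U(K)$.
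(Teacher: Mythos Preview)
Your proof is correct and follows essentially the same approach as the paper's: both invoke the Ore/non-Ore dichotomy together with \cite[Proposition~10.25]{Lam2} in the non-Ore case, and appeal to Theorem~\ref{theo:freesymmetricOre} and Proposition~\ref{prop:twodimensionalcase} in the Ore case. The only cosmetic difference is that you pass to the two-generated subalgebra $K$ first and apply the dichotomy to $U(K)$, whereas the paper applies it directly to $U(L)$; your version is slightly more explicit about the universal property of the Ore localization furnishing the embedding $\mathfrak{D}(K)\hookrightarrow D$.
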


\begin{proof}
It is well knwon that if  $U(L)$  is not  Ore, then it contains a noncommutative free $k$-algebra \cite[Propostion~10.25]{Lam2}.

If $U(L)$ is an Ore domain, then $\mathfrak{D}(L)$ is the Ore division ring of fractions of $U(L)$ and it is contained in any division ring that contains $U(L)$. Then Theorem~\ref{theo:freesymmetricOre} and Propostion~\ref{prop:twodimensionalcase} imply that $\mathfrak{D}(L)$ contains a free $k$-algebra.
\end{proof}

\medskip

Let now $k$ be an uncountable field. Suppose that
$D$ is a division ring that contains $k$ as a central subfield. Suppose that there exist different elements $S$ and $T$ in $D$ that generate a free $k$-algebra $k\langle S,T\rangle$. It was shown in \cite{GoncalvesShirvani} that there exist $a,b\in k$ such that the $k$-algebra generated by $\{1+aS,\ (1+aS)^{-1},\ 1+bT ,\ (1+bT)^{-1}\}$
is the  free group $k$-algebra  on the set $\{1+aS,\ 1+bT\}$.

Let now $L$ be a Lie $k$-algebra with $k$ uncountable and of
characteristic zero.  Suppose that $L$ is one of the Lie
$k$-algebras for which we have proved that $\mathfrak{D}(L)$
contains two symmetric elements with respect to the principal
involution $S$ and $T$ such that the $k$-algebra they generate is
free on $\{S,T\}$. By the foregoing paragraph, there exist $a,b\in
k$ such that the $k$-algebra generated by $\{1+aS,\ (1+aS)^{-1},\
1+bT ,\ (1+bT)^{-1}\}$ is the free group $k$-algebra  on the set
$\{1+aS,\ 1+bT\}$. Observe that $1+aS$ and $1+bT$ are also symmetric
with respect to the principal involution.

\bibliographystyle{amsplain}
\bibliography{grupitosbuenos}

\end{document}